\documentclass[11pt,reqno,a4paper]{amsart}
\usepackage{hyperref}
\usepackage{latexsym}
\usepackage{amssymb,amsmath}
\usepackage{graphics}
\usepackage{url}
\usepackage{bbm}
\usepackage{multirow}
\usepackage[vcentermath]{youngtab}
  
\usepackage{booktabs}  
\usepackage{enumerate}
\overfullrule 5pt
\textwidth160mm
\oddsidemargin5mm
\evensidemargin5mm
\setlength\marginparwidth{1.5cm}

\usepackage[usenames,dvipsnames]{color}
\usepackage{dsfont}
\allowdisplaybreaks

\usepackage{braket} 
\usepackage{multirow}

\usepackage{tikz}
\usetikzlibrary{matrix,chains,shapes,arrows,scopes,positioning}
\usetikzlibrary{calc,decorations.pathmorphing,shapes}
\usetikzlibrary{patterns}
\tikzstyle{empty}=[circle,draw=black!80,thick]
\tikzstyle{emptyn}=[circle,draw=black!80,fill=white,scale=0.5] 
\tikzstyle{nero}=[circle,draw=black!80,fill=black!80,thick] 

\newcommand{\fS}{\mathfrak{S}}

\newcommand{\Irr}{{\mathrm{Irr}}}

\newcommand{\up}{\big\uparrow}

\newcommand{\cd}{\mathrm{cd}}
\newcommand{\lX}{\mathcal{X}}


\newtheorem{theorem}{Theorem}[section]
\newtheorem{lemma}[theorem]{Lemma}

\newtheorem{corollary}[theorem]{Corollary}
\newtheorem{proposition}[theorem]{Proposition}

\theoremstyle{definition}
\newtheorem{example}[theorem]{Example}

\newtheorem{definition}[theorem]{Definition}
\newtheorem{notation}[theorem]{Notation}
\newtheorem{remark}[theorem]{Remark}

\raggedbottom

\newcommand{\6}{^}

\begin{document}

\title[]{Non-linear Sylow Branching Coefficients for symmetric groups}


\author{Eugenio Giannelli}
\address[E. Giannelli]{Dipartimento di Matematica e Informatica U.~Dini, Viale Morgagni 67/a, Firenze, Italy}

\author{Giada Volpato}
\address[G. Volpato]{Dipartimento di Matematica e Informatica U.~Dini, Viale Morgagni 67/a, Firenze, Italy}
\email{eugenio.giannelli@unifi.it, giada.volpato@unifi.it}


\begin{abstract}
We study the restriction to Sylow subgroups of irreducible characters of symmetric groups. 
In particular, we focus our attention on constituents of degree greater than $1$. Our main result is a wide generalization of \cite[Theorem 3.1]{GN}.
\end{abstract}

\keywords{}


\maketitle

\section{Introduction}\label{sec:intro}

The purpose of this article is to investigate the structure of the restriction to Sylow subgroups of irreducible characters of the symmetric group $\fS_n$.  
Let $p$ be a prime number and let $P_n$ be a fixed Sylow $p$-subgroup of $\fS_n$.
The main question studied in this paper is the following. 
Given $k\in\mathbb{N}$, which and how many irreducible characters of $\fS_n$ admit a constituent of degree $p\6k$ in their restriction to $P_n$? 
More formally, we let $\mathrm{Irr}_k(P_n)$ denote the set consisting of all the irreducible characters of $P_n$ of degree $p\6k$, and we focus our attention on the subset $\Omega_n\6k$ of $\mathrm{Irr}(\fS_n)$ defined as follows:
$$\Omega_n\6k=\{\chi\in\mathrm{Irr}(\fS_n)\ |\ [\chi_{P_n}, \phi]\neq 0,\ \text{for some}\ \phi\in\mathrm{Irr}_k(P_n)\}.$$

In \cite[Theorem 3.1]{GN} it is proved that the restriction to $P_n$ of any irreducible character of $\fS_n$ admits a linear constituent. In other words, $\Omega_n\60=\mathrm{Irr}(\fS_n)$.  This result was improved (for odd primes) in \cite{GL2} where, for every linear character $\phi$ of $P_n$, the authors classify those irreducible characters $\chi$ of $\fS_n$ such that $\phi$ appears as an irreducible constituent of $\chi_{P_n}$. 

In this article we largely extend in a new direction the result obtained in \cite{GN} mentioned above. 
More precisely, for any odd prime number $p$,  we are able to describe the set $\Omega_n\6k$, for any $k\in\mathbb{N}$. Surprisingly enough, these sets possess quite a regular structure. In order to describe it we recall that irreducible characters of $\fS_n$ are naturally in bijection with $\mathcal{P}(n)$, the set of partitions of $n$. With this in mind, we find it useful to think of $\Omega_n\6k$ as a subset of $\mathcal{P}(n)$ instead of $\mathrm{Irr}(\fS_n)$. For any $t\in\mathbb{N}$, we let $\mathcal{B}_n(t)$ be the subset of $\mathcal{P}(n)$ consisting of partitions whose Young diagram fits into a $t\times t$ grid (i.e. having first row and first column of size at most $t$). 

\medskip

The first main result of the article is \textbf{Theorem \ref{thm: boxes arb n}}, where we show that for any $k\in\mathbb{N}$ there exists a certain $T_n\6k\in\{1,\ldots, n\}$ such that $\Omega_n\6k=\mathcal{B}_n(T_n^k)$. In other words, Theorem  \ref{thm: boxes arb n} shows that the set of partitions of $n$ whose corresponding irreducible characters admit a constituent of degree $p^k$ on restriction to a Sylow $p$-subgroup of $\fS_n$ coincide with the set of partitions of $n$ which fit inside a square, whose size depend on both $n$ and $t$.
As mentioned above, this statement highlights the nice and well-behaved combinatorial structure of the sets $\Omega_n\6k$.  

The description given by Theorem \ref{thm: boxes arb n} is sharpened in \textbf{Theorem \ref{thm: n}}, where we explicitly compute the value of $T_n\6k$ for all $n,k\in\mathbb{N}$. We avoid the precise description of these values here, as it requires the introduction of some technical definitions. Nevertheless, we refer the reader to Tables \ref{table: p=3} and \ref{table: final} for several specific and concrete instances of our second main result. 

An intriguing consequence of Theorem \ref{thm: n} is that $\Omega_n^j\subseteq \Omega_n^k$, for all $k\leq j$. 
This means that whenever $\chi_{P_n}$ admits an irreducible constituent of degree $p^j$, then it also admits constituents of degree $p^k$, for all $0\leq k\leq j$. 

We conclude our article by studying the second part of the question we proposed above. Namely,  we give an estimate for how many characters of $\fS_n$ are contained in $\Omega_n\6k$. In \textbf{Corollary \ref{thm: asintotico}} we show that the restriction to $P_n$ of almost all irreducible characters of $\fS_n$ admits an irreducible constituent of degree $p\6k$, for all admissible $k\in\mathbb{N}$. More precisely, we prove that $$\lim_{n\rightarrow\infty}\frac{|\Omega_n|}{|\mathcal{P}(n)|}=1,$$
where $\Omega_n$ is the intersection of all of the sets $\Omega_n\6k$, where $k$ runs among all those natural numbers such that $p\6k$ is the degree of an irreducible character of $P_n$. 

\begin{remark}
As mentioned above, this article treats the case of odd primes. When $p=2$, linear constituents of the restriction to Sylow $2$-subgroups of odd degree characters of $\fS_n$ were studied in \cite{INOT}, mainly in connection with the McKay Conjecture \cite{Nav18}. 
Despite this, the object of our study seems to be particularly difficult when $p=2$. For instance, we immediately notice in this case that the set $\Omega_4\61=\{(3,1),(2,1,1)\}$ and therefore is not of the form $\mathcal{B}_4(T)$, for any $T\in \{1,2,3,4\}$. This shows that the main theorems of the present article do not hold for the prime $2$. 
Even if this irregularity might disappear for larger natural numbers, more serious obstacles arise in this setting. For example, Lemma \ref{lem: neq const} below asserts that the restriction to $P_n$ of every non-linear irreducible character of $\fS_n$ cannot admit a unique irreducible constituent of a certain degree. This is a crucial ingredient in the proofs of our main results. 
Unfortunately, this is plainly false when the prime is $2$. For instance, in \cite{GJLMS} it is shown that if $\lambda=(2\6n-x,1\6x)$ then $(\chi\6\lambda)_{P_{2\6n}}$ admits a unique constituent of degree $1$. Things can go even worse: if $\lambda=(2\6n-1,1)$ then it is not difficult to see that $(\chi\6\lambda)_{P_{2\6n}}$ admits a unique constituent of degree $2\6k$, for all $k\in\{0,1,\ldots, n-1\}$. 
\end{remark}

\noindent\textbf{Acknowledgments.}
We thank Stacey Law for carefully reading an earlier version of this article and for providing us with many helpful comments and suggestions. 
We are also grateful to the referees for several useful comments that improved our article.

\section{Notation and Background}\label{sec:prelim}
Throughout this article, $p$ denotes an odd prime. Given integers $n\leq m$, we denote by $[n,m]$ the set $\{n, n+1,\ldots, m\}$. If $n<m$ then $[m,n]$ is regarded as the empty set. 
We let $\mathcal{C}(n)$ be the set of compositions of $n$, i.e. the set consisting of all the finite sequences $(a_1,a_2,\ldots, a_z)$ such that $a_i$ is a non-negative integer for all $i\in [1,z]$ and such that $a_1+\cdots+a_z=n$.
Given $\lambda=(\lambda_1,\dots , \lambda_z)\in \mathcal{C}(n)$, we sometimes denote by $l(\lambda)=z$ the number of parts of $\lambda$.
As already mentioned in the introduction, $P_n$ denotes a Sylow $p$-subgroup of the symmetric group $\fS_n$. As usual, given a finite group $G$, we denote by $\mathrm{Irr}(G)$ the set of irreducible complex characters of $G$, and by $\mathrm{Lin}(G)$ the subset of linear characters of $G$. Finally, $\mathrm{cd}(G)=\{\chi(1)\ |\ \chi\in\mathrm{Irr}(G)\}$ is the set of irreducible character degrees. 

\subsection{Wreath products}
Here we fix the notation for characters of wreath products. For more details see \cite[Chapter 4]{JK}.
Let $G$ be a finite group and let $H$ be a subgroup of $\mathfrak{S}_n$. We denote by $G^{\times n}$ the direct product of $n$ copies of $G$. The natural action of $\mathfrak{S}_n$ on the direct factors of $G^{\times n}$ induces an action of $\mathfrak{S}_n$ (and therefore of $H\leq \mathfrak{S}_n $) via automorphisms of $G^{\times n}$, giving the wreath product $G\wr H := G^{\times n} \rtimes H$. We refer to $G^{\times n}$ as the base group of the wreath product $G\wr H$. 
We denote the elements of $G\wr H$ by $(g_1,\dots , g_n ;h)$ for $g_i\in G$ and $h\in H$. Let $V$ be a $\mathbb{C}G$-module and suppose it affords the character $\phi$.
We let $V^{\otimes n} := V \otimes \cdots \otimes V$ ($n$ copies) be the corresponding $\mathbb{C} G^{\times n}$-module. The left action of $G\wr H$ on $V^{\otimes n}$ defined by linearly extending $$(g_1,\dots ,g_n; h) : v_1 \otimes \cdots \otimes v_n \mapsto g_1 v_{h^{-1}(1)} \otimes \cdots \otimes g_n v_{h^{-1}(n) } ,$$
turns $V^{\otimes n}$ into a $\mathbb{C}(G\wr H)$-module, which we denote by $\tilde{V}^{\otimes n}$.
We denote by $\tilde{\phi}$ the character afforded by the $\mathbb{C}(G\wr H)$-module $\tilde{V}^{\otimes n}$.
For any character $\psi$ of $H$, we let $\psi$ also denote its inflation to $G\wr H$ and let 
$\mathcal{X} (\phi; \psi):= \tilde{\phi} \cdot \psi$
be the character of $G\wr H$ obtained as the product of $\tilde{\phi}$ and $\psi$.
Let $\phi \in {\rm Irr}(G)$ and let $\phi^{\times n}:= \phi \times \cdots \times \phi$ be the corresponding irreducible character of $G^{\times n}$. Observe that $\tilde{\phi}\in {\rm Irr}(G\wr H)$ is an extension of $\phi^{\times n}$.
Given $K\leq G$, we denote by ${\rm Irr}(G|\psi)$ the set of characters $\chi \in {\rm Irr}(G)$ such that $\psi$ is an irreducible constituent of the restriction $\chi_K$. Hence, by Gallagher's Theorem \cite[Corollary 6.17]{IBook} we have
\[{\rm Irr}(G\wr H | \phi^{\times n})= \Set{ \mathcal{X} (\phi; \psi) | \psi \in {\rm Irr}(H)}. \]

If $H=C_p$ is a cyclic group of prime order $p$, every $\psi \in \Irr(G\wr C_p)$ is either of the form
\begin{itemize}
	\item[(i)] $\psi= \phi_1 \times \cdots \times \phi_p \up^{G\wr C_p}_{G^{\times p}}$, where $\phi_1 , \dots \phi_p \in \Irr(G)$ are not all equal; or
	\item[(ii)] $\psi= \lX (\phi; \theta)$ for some $\phi \in \Irr(G)$ and $\theta \in \Irr(C_p)$. 
\end{itemize}
We remark that in case (i) we have that ${\rm Irr}(G\wr C_p | \phi_1 \times \cdots \times \phi_p)=\{\psi\}$. 

\subsection{Sylow subgroups of $\fS_n$}\label{sec2.2}
We record some facts about Sylow subgroups of symmetric group and we refer to \cite[Chapter 4]{JK} or to \cite{Olsson} for more details.

We let $P_n$ denote a Sylow $p$-subgroup of $\fS_n$. Clearly $P_1$ is the trivial group while $P_p\cong C_p$ is cyclic of order $p$. If $i\geq 2$, then $P_{p^i}=\big(P_{p^{i-1}}\big)^{\times p} \rtimes P_p=P_{p^{i-1}}\wr P_p\cong P_p\wr \cdots \wr P_p$ ($i$-fold wreath product). Let $n=\sum_{i=1}^t a_i p^{n_i}$ be the $p$-adic expansion of $n$. Then $P_n \cong P_{p^{n_1}}^{\times a_1} \times P_{p^{n_2}}^{\times a_2} \times \cdots \times P_{p^{n_t}}^{\times a_t}$.

For $n\in \mathbb{N}$, the normalizer of a Sylow $p$-subgroup of $\fS_{p^n}$ is $N_{\fS_{p^n}}(P_{p^n}) = P_{p^n} \rtimes H$, where $H\cong (C_{p-1})^{\times n}$.
More generally, if $n=\sum_{i=1}^t a_i p^{n_i}$, $n_t > \cdots > n_1 \geq 0$, then $N_{\fS_n}(P_n)=N_1 \wr \fS_{a_1} \times \cdots \times N_t \wr \fS_{a_t}$, where $N_i:=N_{\fS_{p^{n_i}}}(P_{p^{n_i}})$ for every $i\in [1,t]$.
We refer the reader to \cite[Section 2]{G21} for more details about the structure of the normaliser of a Sylow $p$-subgroup. The following fact is certainly well known. We state it here as we will need it in the following section of the article.


\begin{lemma} \label{lem: elements in N}
Let $p$ be an odd prime, 
let $n\in \mathbb{N}$ and let $H$ be a complement of $P_{p\6n}$ in $N_{\fS_{p^n}}(P_{p^n})$. 
There are no non trivial elements of $P_{p\6n}$ that are centralized by $H$.
\end{lemma}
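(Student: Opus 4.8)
The strategy is to use induction on $n$ together with the recursive wreath-product structure $P_{p^n} = P_{p^{n-1}} \wr P_p$ and the corresponding description of a complement $H$ of $P_{p^n}$ in its normaliser, namely $H \cong (C_{p-1})^{\times n}$, realised concretely as $H = H' \wr C_{p-1}$ where $H'$ is a complement of $P_{p^{n-1}}$ in $N_{\fS_{p^{n-1}}}(P_{p^{n-1}})$ and $C_{p-1}$ acts on the base group $P_{p^{n-1}}^{\times p}$ as the diagonal $C_{p-1}$ sitting inside $N_{\fS_{p^{n-1}}}(P_{p^{n-1}}) \wr C_p$ (the $C_{p-1}$ being the non-trivial part of the normaliser of $P_p \cong C_p$ in $\fS_p$). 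The base case $n=1$ is immediate: $P_p \cong C_p$ and $H \cong C_{p-1}$ acts on $C_p$ by multiplication through a faithful character, so $C_{C_p}(H) = 1$.

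First I would set up the inductive step. Suppose $x = (x_1, \dots, x_p; c) \in P_{p^n} = P_{p^{n-1}}^{\times p} \rtimes P_p$ is centralised by all of $H$. Testing centralisation against the subgroup of $H$ that projects onto the top $C_{p-1}$ (acting simultaneously on all $p$ coordinates and on $C_p$ itself) forces $c = 1$, since a generator of this $C_{p-1}$ conjugates the cycle $c \in P_p \subseteq \fS_p$ to a non-trivial power of itself unless $c = 1$. So $x = (x_1, \dots, x_p; 1)$ lies in the base group $P_{p^{n-1}}^{\times p}$.

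Next, testing against the subgroup $H' \times \cdots \times H'$ acting coordinatewise (this sits inside $H$ as the base group of $H' \wr C_{p-1}$ — more precisely one extracts from $H$ enough elements to act independently on each coordinate) forces each $x_i$ to be centralised by $H'$ in $P_{p^{n-1}}$; by the inductive hypothesis each $x_i = 1$. The one subtlety is that the copies of $H'$ in the different coordinates are not literally independent inside $H \cong (C_{p-1})^{\times n}$ — the "diagonal" structure means one must be slightly careful about exactly which elements of $H$ are available. I expect this bookkeeping to be the main obstacle: one needs to check that, combining the coordinatewise action of $H'$ with the diagonal top-level $C_{p-1}$, the elements of $H$ still separate the coordinates well enough to force every $x_i$ trivial. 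Concretely, using the diagonal $C_{p-1}$ one can permute the role of the coordinates and mix in powers, and since $\gcd(p-1, \text{stuff})$ behaves well one recovers full coordinatewise control; alternatively one argues that $C_{P_{p^{n-1}}^{\times p}}(H)$ is contained in the fixed points of the coordinatewise $H'$-action, which is $Z^{\times p}$ where $Z = C_{P_{p^{n-1}}}(H') = 1$ by induction, so it is trivial directly. This second formulation avoids the separation issue entirely and is the cleaner route: the point is simply that $C_{P_{p^n}}(H) \subseteq C_{P_{p^n}}(H_0)$ for the convenient subgroup $H_0 \leq H$ consisting of coordinatewise $H'$-elements, and $C_{P_{p^{n-1}}^{\times p}}(H_0) = (C_{P_{p^{n-1}}}(H'))^{\times p}$, to which induction applies after the first step has cut down to the base group.

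Finally, since $x=1$, we conclude $C_{P_{p^n}}(H) = 1$, completing the induction. I would present the argument in exactly these two moves per inductive step — kill the top permutation part using the diagonal torus, then kill the base group using the coordinatewise torus and induction — and remark that the only place oddness of $p$ is used is to guarantee $p - 1 > 1$, so that $C_{p-1}$ acts non-trivially on $C_p$ in the base case (for $p=2$ the complement $H$ is trivial and the statement fails).
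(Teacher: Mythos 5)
Your argument is correct, but it is genuinely different from what the paper does: the paper gives no self-contained proof at all, simply asserting that the lemma ``follows directly from the discussion in [G21, Section 2.2]''. Your induction on $n$ therefore supplies an actual proof, and its two moves are sound: the top-level $C_{p-1}$ (a lift of a generator of $N_{\fS_p}(P_p)/P_p$) acts on the quotient $P_{p^n}/B\cong C_p$ by a power automorphism $c\mapsto c^k$ with $k\not\equiv 1 \pmod p$, which forces any $H$-centralized element into the base group $B=P_{p^{n-1}}^{\times p}$; and then the copy of $H'$ available inside $H$ acts coordinatewise on $B$, so its fixed points are exactly $C_{P_{p^{n-1}}}(H')^{\times p}$, which vanishes by induction. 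Two small corrections to your write-up: the complement is not $H'\wr C_{p-1}$ (that group is far too large); it is the direct product $\Delta(H')\times C_{p-1}$ of the \emph{diagonally} embedded copy of $H'$ in the base group of $N_{\fS_{p^{n-1}}}(P_{p^{n-1}})\wr \fS_p$ with the top-level torus, which is how one gets $(C_{p-1})^{\times n}$. Moreover, the ``separation'' worry you raise is a non-issue and your second formulation is the right one: since $(h,\ldots,h)$ conjugates $(x_1,\ldots,x_p)$ to $(hx_1h^{-1},\ldots,hx_ph^{-1})$, being fixed by the diagonal already forces each coordinate individually into $C_{P_{p^{n-1}}}(H')$ --- no independent copies of $H'$ are needed. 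One could also note that all complements of $P_{p^n}$ in the normaliser are conjugate (Schur--Zassenhaus), so verifying the claim for this particular complement proves it for every $H$ as in the statement. With these adjustments your proof stands on its own, and arguably makes the paper more self-contained than the citation it currently relies on.
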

\begin{proof}
This follows directly from the discussion in \cite[Section 2.2]{G21}.
\end{proof}

We remark that Lemma \ref{lem: elements in N} is equivalent to say that $C_{\fS_{p^n}}(P_{p^n})=Z(P_{p^n})$, for any $n\in\mathbb{N}$.

\subsection{The Littlewood-Richardson coefficients}
For each $n\in \mathbb{N}$, $\Irr(\fS_n)$ is naturally in bijection with $\mathcal{P}(n)$, the set of all partitions of $n$. For $\lambda \in \mathcal{P}(n)$, the corresponding irreducible character is denoted by $\chi^\lambda$.
Let $m,n\in \mathbb{N}$ with $m<n$. Given $\chi\6\mu\times\chi\6\nu\in\mathrm{Irr}(\fS_m\times\fS_{n-m})$, the decomposition into irreducible constituents of the induction
\[\left( \chi^\mu \times \chi^\nu \right)^{\mathfrak{S}_n}= \sum_{\lambda\in \mathcal{P}(n)} \mathcal{LR}(\lambda;\mu ,\nu) \chi^{\lambda}\]
is described by the Littlewood-Richardson rule (see \cite[Chapter 5]{Fulton} or \cite[Chapter 16]{J}).
Here the natural numbers $\mathcal{LR}(\lambda;\mu ,\nu)$ are called \emph{Littlewood-Richardson coefficients}.
Given $(n_1,\ldots, n_k)\in\mathcal{C}(n)$, $\lambda\in\mathcal{P}(n)$ and $\mu_j\in\mathcal{P}(n_j)$ for all $j\in [1,k]$, we let $\mathcal{LR}(\lambda; \mu_1,\ldots, \mu_k)$ be the multiplicity of $\chi^\lambda$ as an irreducible constituent of $(\chi^{\mu_1}\times\cdots\times\chi^{\mu_k})^{\mathfrak{S}_n}_{Y}$. 
Here $Y$ denotes the Young subgroup $\fS_{n_1}\times \fS_{n_2}\times\cdots\times \fS_{n_k}$ of $\fS_n$. 
The following lemma describes the behavior of the first parts of the partitions involved in a non-zero Littlewood-Richardson coefficient. This will be used several times in the following sections. 
\begin{lemma}\label{lem: LR}
Let $\mathcal{LR}(\lambda; \mu_1,\ldots, \mu_k)\neq 0$ then $\lambda_1\leq \sum_{j=1}\6k(\mu_j)_1$. 
\end{lemma}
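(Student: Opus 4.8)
The plan is to reduce the general statement about $\mathcal{LR}(\lambda;\mu_1,\dots,\mu_k)$ to the classical two-factor Littlewood--Richardson rule by inducting on $k$. Recall that $\mathcal{LR}(\lambda;\mu_1,\dots,\mu_k)$ counts the multiplicity of $\chi^\lambda$ in $(\chi^{\mu_1}\times\cdots\times\chi^{\mu_k})^{\fS_n}$, where $\mu_j\in\mathcal{P}(n_j)$ and $n=n_1+\cdots+n_k$. By transitivity of induction, this induction can be performed in two stages: first induce $\chi^{\mu_1}\times\cdots\times\chi^{\mu_{k-1}}$ from $\fS_{n_1}\times\cdots\times\fS_{n_{k-1}}$ up to $\fS_{n-n_k}$, and then induce the resulting character, times $\chi^{\mu_k}$, up to $\fS_n$. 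Concretely, if $\chi^\lambda$ appears in $(\chi^{\mu_1}\times\cdots\times\chi^{\mu_k})^{\fS_n}$, then there is a partition $\rho\in\mathcal{P}(n-n_k)$ with $\mathcal{LR}(\rho;\mu_1,\dots,\mu_{k-1})\neq 0$ and $\mathcal{LR}(\lambda;\rho,\mu_k)\neq 0$.

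First I would dispose of the base case $k=1$, which is trivial since $\mathcal{LR}(\lambda;\mu_1)\neq 0$ forces $\lambda=\mu_1$, so $\lambda_1=(\mu_1)_1$. Next I would handle the case $k=2$: this is exactly the statement that a non-zero Littlewood--Richardson coefficient $\mathcal{LR}(\lambda;\mu,\nu)$ forces $\lambda_1\leq\mu_1+\nu_1$, which is immediate from the combinatorial description of the rule — in any Littlewood--Richardson filling of the skew shape $\lambda/\mu$ with content $\nu$, the cells of $\lambda$ in the first row that lie outside $\mu$ are filled with entries from a semistandard tableau, and the reverse reading word lattice condition forces the first row of $\lambda/\mu$ to be filled with $1$'s; since there are only $\nu_1$ available $1$'s (as $\nu$ has $\nu_1$ cells in its first row, hence $\nu_1$ entries equal to $1$ in any SSYT of shape $\nu$... wait, more carefully: the number of $1$'s equals $\nu_1$), we get $\lambda_1-\mu_1\leq\nu_1$. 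Alternatively, and more cleanly, one can invoke the fact that $\chi^\lambda$ appears in $(\chi^\mu\times\chi^\nu)^{\fS_n}$ only if $\lambda\trianglerighteq$ (dominance-related bounds), but the cleanest route is simply that $\mu\subseteq\lambda$ and each row of $\lambda/\mu$ has at most $\nu_1$ cells — in particular the first row.

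For the inductive step, suppose the result holds for $k-1$ factors. Given $\mathcal{LR}(\lambda;\mu_1,\dots,\mu_k)\neq 0$, pick $\rho$ as above with $\mathcal{LR}(\rho;\mu_1,\dots,\mu_{k-1})\neq 0$ and $\mathcal{LR}(\lambda;\rho,\mu_k)\neq 0$. By the inductive hypothesis, $\rho_1\leq\sum_{j=1}^{k-1}(\mu_j)_1$. By the $k=2$ case applied to $\mathcal{LR}(\lambda;\rho,\mu_k)\neq 0$, we get $\lambda_1\leq\rho_1+(\mu_k)_1\leq\sum_{j=1}^{k-1}(\mu_j)_1+(\mu_k)_1=\sum_{j=1}^{k}(\mu_j)_1$, as desired.

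The only real content is the $k=2$ case, i.e.\ establishing $\lambda_1\leq\mu_1+\nu_1$ from the classical Littlewood--Richardson rule, and this is standard; the main (minor) obstacle is simply to phrase it correctly from whichever version of the rule one adopts — the bound follows because a Littlewood--Richardson tableau of shape $\lambda/\mu$ and content $\nu$ has each row weakly increasing with the first row entirely equal to $1$ by the lattice word condition, and the first row of $\lambda/\mu$ therefore has length at most the number of $1$'s, which is $\nu_1$. Everything else is a formal induction using transitivity of induction and Frobenius reciprocity, requiring no computation.
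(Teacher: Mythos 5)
Your proof is correct and follows essentially the same route as the paper: the paper also reduces to the $k=2$ case, which it attributes to the combinatorial Littlewood--Richardson rule in Fulton, and then concludes ``by iteration,'' which is exactly your induction via transitivity of induction. Your spelled-out argument for $k=2$ (the lattice-word condition forces the first row of $\lambda/\mu$ to consist of $1$'s, of which there are only $\nu_1$) is a valid justification of the step the paper leaves implicit.
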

\begin{proof}
When $k=2$, the statement is a straightforward consequence of the combinatorial description of the Littlewood-Richardson coefficient 
$\mathcal{LR}(\lambda; \mu_1, \mu_2)$, as given in \cite[Section 5.2]{Fulton}. The lemma is then proved by iteration. 
\end{proof}

As in \cite{GL2}, we define $\mathcal{B}_n(t)$ as the set of those partitions of $n$ whose Young diagram fits inside a $t\times t$ square grid, i.e. for $n,t \in \mathbb{N}$, we set
\[ \mathcal{B}_n(t):=\Set{\lambda \in \mathcal{P}(n) | \lambda_1 \leq t, \ l(\lambda)\leq t}. \]

Moreover, for $(n_1,\ldots, n_k)\in\mathcal{C}(n)$ and $A_j\subseteq \mathcal{P}(n_j)$ for all $j\in [1, k]$, we let \[A_1\star A_2\star\cdots\star A_k := \Set{\lambda \in \mathcal{P}(n) | \mathcal{LR}(\lambda; \mu_1,\ldots, \mu_k) > 0,\ \text{for some}\ \mu_1\in A_1, \ldots, \mu_k\in A_k}.\]
It is easy to check that $\star$ is both commutative and associative.
The following lemma was first proved in \cite[Proposition 3.3]{GL2}.
\begin{lemma}\label{lem: B star}
Let $n,n',t,t'\in\mathbb{N}$ be such that $\tfrac{n}{2}<t\le n$ and $\tfrac{n'}{2}<t'\le n'$. Then 
	$$\mathcal{B}_n(t) \star\mathcal{B}_{n'}(t') = \mathcal{B}_{n+n'}(t+t').$$
\end{lemma}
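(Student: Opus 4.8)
The plan is to prove the two inclusions $\mathcal{B}_n(t)\star\mathcal{B}_{n'}(t')\subseteq\mathcal{B}_{n+n'}(t+t')$ and $\mathcal{B}_{n+n'}(t+t')\subseteq\mathcal{B}_n(t)\star\mathcal{B}_{n'}(t')$ separately. The first inclusion is the easy one. If $\lambda\in\mathcal{B}_n(t)\star\mathcal{B}_{n'}(t')$, then $\mathcal{LR}(\lambda;\mu,\nu)>0$ for some $\mu\in\mathcal{B}_n(t)$ and $\nu\in\mathcal{B}_{n'}(t')$. By Lemma~\ref{lem: LR} we get $\lambda_1\le\mu_1+\nu_1\le t+t'$. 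Applying the same reasoning to the conjugate partitions — using that $\mathcal{LR}(\lambda;\mu,\nu)=\mathcal{LR}(\lambda';\mu',\nu')$ and that conjugation swaps first row and first column while preserving the box conditions defining $\mathcal{B}$ — gives $l(\lambda)=(\lambda')_1\le(\mu')_1+(\nu')_1=l(\mu)+l(\nu)\le t+t'$. Hence $\lambda\in\mathcal{B}_{n+n'}(t+t')$.

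For the reverse inclusion, the plan is to start from $\lambda\in\mathcal{B}_{n+n'}(t+t')$, so $\lambda_1\le t+t'$ and $l(\lambda)\le t+t'$, and to construct explicitly partitions $\mu\in\mathcal{B}_n(t)$, $\nu\in\mathcal{B}_{n'}(t')$ with $\mathcal{LR}(\lambda;\mu,\nu)>0$. The natural candidate is to split the Young diagram of $\lambda$ horizontally: take $\mu$ to consist of the first $r$ rows of $\lambda$ and $\nu$ to consist of the remaining rows, choosing $r$ so that $|\mu|$ can be adjusted to exactly $n$. Since every row of $\lambda$ has length at most $t+t'$, and we have the hypotheses $n/2<t\le n$ and $n'/2<t'\le n'$ (so $t,t'$ are "more than half" and in particular $t+t'>(n+n')/2\ge\lceil(n+n')/2\rceil$ is comfortably large), one expects to be able to peel off a sub-diagram $\mu$ that is a genuine partition, has $|\mu|=n$, has $\mu_1\le\lambda_1\le t+t'$ which must be cut down to $\le t$, and similarly bound its column length. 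Because a horizontal strip decomposition has $\mathcal{LR}(\lambda;\mu,\nu)\ge1$ whenever $\mu\subseteq\lambda$ and $\lambda/\mu$ is such that the skew shape can be LR-filled — and in fact any $\mu\subseteq\lambda$ with $|\mu|=n$ gives $\mathcal{LR}(\lambda;\mu,\lambda/\mu\text{-rectification})>0$ for a suitable partition $\nu$ — the combinatorial content reduces to showing one can choose $\mu\subseteq\lambda$, $|\mu|=n$, with $\mu_1\le t$, $l(\mu)\le t$, and such that the complementary partition $\nu$ (the rectification of $\lambda/\mu$, which has $|\nu|=n'$) satisfies $\nu_1\le t'$, $l(\nu)\le t'$.

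The cleanest way to organise this is probably an induction argument rather than a one-shot construction: I would induct on $n+n'$ (or on $\min(n,n')$), peeling off a single box at a time from $\lambda$ to pass from the pair $(n,n')$ to $(n-1,n')$ or to $(n,n'-1)$, using that removing a removable box of $\lambda$ and adjusting keeps the LR-coefficient positive (via the branching rule / Pieri rule with a single box), and checking the box constraints are preserved under the constraint $n/2<t$, $n'/2<t'$. Alternatively one can reduce to the known base case: $\mathcal{B}_m(m)=\mathcal{P}(m)$ and $\mathcal{P}(n)\star\mathcal{P}(n')=\mathcal{P}(n+n')$, then use a "squeezing" argument to reduce $t$ from $n$ down and $t'$ from $n'$ down one unit at a time, each step invoking $\mathcal{B}_m(s)\star\mathcal{P}(n')\supseteq$ something — but this seems to require essentially the same combinatorial lemma. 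The main obstacle, and the step I expect to require real care, is verifying that the removable/addable box bookkeeping is compatible with \emph{both} the row bound and the column bound on \emph{both} $\mu$ and $\nu$ simultaneously: the hypotheses $\tfrac n2<t$ and $\tfrac{n'}2<t'$ are exactly what is needed to guarantee a legal move always exists (if $t$ were as small as, say, $1$ with $n$ large, no such $\mu$ could exist), so the proof must use them at precisely this point, and making the inductive step go through will likely need a short case analysis according to whether the extremal row/column of $\lambda$ is "too long" and must be shortened on the $\mu$-side or the $\nu$-side.
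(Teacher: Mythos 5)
The paper does not prove this lemma at all: it is quoted verbatim from \cite[Proposition 3.3]{GL2}, so there is no internal proof to compare against. Judged on its own terms, your argument for the inclusion $\mathcal{B}_n(t)\star\mathcal{B}_{n'}(t')\subseteq\mathcal{B}_{n+n'}(t+t')$ is complete and correct: Lemma \ref{lem: LR} bounds $\lambda_1$, and the conjugation symmetry $\mathcal{LR}(\lambda;\mu,\nu)=\mathcal{LR}(\lambda';\mu',\nu')$ bounds $l(\lambda)$.

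The reverse inclusion, however, is where the entire content of the lemma lives, and there you have written a plan rather than a proof. You correctly identify that one must produce $\mu\subseteq\lambda$ with $|\mu|=n$, $\mu\in\mathcal{B}_n(t)$, and a complementary $\nu\in\mathcal{B}_{n'}(t')$ with $\mathcal{LR}(\lambda;\mu,\nu)>0$, and you correctly locate the only place the hypotheses $\tfrac n2<t$, $\tfrac{n'}2<t'$ can enter (they rule out, e.g., $\lambda_1=t+t'$ and $l(\lambda)=t+t'$ holding simultaneously in a way that would block the construction, since that forces $|\lambda|\ge 2(t+t')-1>n+n'$). But the construction itself --- whether as a one-shot splitting of the diagram or as a one-box-at-a-time induction --- is exactly the ``short case analysis'' you defer, and it is not routine: a single box removal can shorten the first row or the first column but not both; the induction on $n'$ must branch according to whether $t'=n'$ or $t'<n'$ so that the inductive hypotheses $\tfrac{n'-1}2<t'-1$ or $\tfrac{n'-1}2<t'$ remain valid, and the boundary case $t'=\lceil\tfrac{n'+1}2\rceil$ needs separate handling; and one must also verify that the chosen $\nu$ genuinely pairs with $\mu$ to give a positive coefficient (knowing only that \emph{some} $\nu$ with $s_\nu$ appearing in $s_{\lambda/\mu}$ exists does not yet control $\nu_1$ and $l(\nu)$). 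Until these steps are written out, the proof is incomplete at its essential point.
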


\section{Preliminary results}
In this section we start collecting some results on restriction of characters to Sylow $p$-subgroups. These will be used to prove our main theorems in the second part of the paper. 

Unless otherwise stated, from now on $p$ will always denote a fixed odd prime number.
Let $n\in \mathbb{N}$ and let $n=\sum_{i=1}\6{t}p\6{n_i}$ be its $p$-adic expansion, where $n_1\geq n_2\geq\cdots \geq n_t\geq 0$. 
We define the integer $\alpha_n$ as follows. For powers of $p$ we set $\alpha_1=\alpha_p=0$ and $\alpha_{p^k}=(p^{k-1}-1)/(p-1)$, for $k\geq 2$. For general $n=\sum_{i=1}\6{t}p\6{n_i}$, we set $ \alpha_n = \sum_{i=1}^t \alpha_{p^{n_i}}$.
%
%
%
As shown in Lemma \ref{lem: cd_P} below, $p\6{\alpha_n}$ is the greatest degree of an irreducible character of $P_n$. 
It is interesting to note that $\alpha_n=\nu(\lfloor\frac{n}{p}\rfloor!)$, where $\nu(n)$ denotes the highest power of $p$ dividing $n$. We omit the proof of this statement and we refer the reader to \cite{GVThesis} for the complete calculations. 

As mentioned in the Introduction, we let $\mathrm{Irr}_k(P_n)=\{\theta\in\mathrm{Irr}(P_n)\ |\ \theta(1)=p^k\}$. 
This notation will be kept throughout the article. 
In the following lemma we give a lower bound for the size of the set $\mathrm{Irr}_k(P_n)$. This is certainly far from being attained (in general), but it will be sufficient for our purposes. 
\begin{lemma} \label{lem: neq irrid}
Let $k,t\in \mathbb{N}$ be such that $p^k \in \cd(P_{p^t})$. Then $|\mathrm{Irr}_k(P_{p^t})|\geq p$.
\end{lemma}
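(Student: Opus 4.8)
The plan is to induct on $t$, using the iterated wreath product structure $P_{p^t} = P_{p^{t-1}} \wr P_p$. The base case is $t$ small enough that $\cd(P_{p^t})$ only contains $p^0 = 1$, where the statement amounts to $|\Lin(P_{p^t})| \geq p$; since $P_{p^t}$ is a nontrivial $p$-group, its abelianization has order at least $p$, so this is immediate. For the inductive step, I would analyse how character degrees of $P_{p^t}$ arise from those of $P_{p^{t-1}}$ via the two families of irreducible characters of $G \wr C_p$ recalled in the excerpt, with $G = P_{p^{t-1}}$.

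\textbf{The inductive step.} Suppose $p^k \in \cd(P_{p^t})$ with $t \geq 2$. An irreducible character of $P_{p^t} = G \wr C_p$ has degree either $p \cdot \phi_1(1)\cdots\phi_p(1)/\text{(something)}$ — more precisely, in case (i) it is $(\phi_1 \times \cdots \times \phi_p)\!\up^{G\wr C_p}$ of degree $p\prod_i \phi_i(1)$ with the $\phi_i$ not all equal, and in case (ii) it is $\lX(\phi;\theta)$ of degree $\phi(1)^p$. I would split into two cases according to which family produces a character of degree $p^k$.

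\emph{Case A: $p^k = \phi(1)^p$ for some $\phi \in \Irr(G)$, i.e. $k = p\cdot m$ where $p^m = \phi(1) \in \cd(G)$.} Then the $p$ characters $\lX(\phi;\theta)$, as $\theta$ ranges over the $p$ distinct linear characters of $C_p$, are pairwise distinct (they restrict to $\theta$ on the inflated $C_p$) and all have degree $p^k$. This immediately gives $|\Irr_k(P_{p^t})| \geq p$.

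\emph{Case B: $p^k = p\prod_{i=1}^p \phi_i(1)$ with the $\phi_i \in \Irr(G)$ not all equal.} Here I would use that the single character $(\phi_1 \times \cdots \times \phi_p)\!\up$ is the unique irreducible constituent of that induced character, but I need $p$ distinct characters. The key observation is that if $\phi_1, \ldots, \phi_p$ are not all equal, then the cyclic shift action of $C_p$ on the $p$-tuple $(\phi_1,\ldots,\phi_p)$ has a nontrivial orbit; however this only gives one induced character per orbit. Instead I would argue as follows: fix such a tuple with $\prod \phi_i(1) = p^{k-1}$. I can modify it: replace one $\phi_j$ by a different character of the same degree, if one exists, producing a genuinely different $C_p$-orbit and hence a different irreducible character of $P_{p^t}$ of the same degree $p^k$. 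If no such alternative exists at any degree appearing among the $\phi_i$, then each relevant degree class in $\Irr(G)$ is a singleton; but one can always permute which coordinate carries which $\phi_i$ — no, that gives the same orbit. The cleanest route: since $p^{k-1} = \prod \phi_i(1)$ and the $\phi_i$ are not all equal, at least two distinct characters $\phi_a \neq \phi_b$ appear; now for each $j \in \{0, 1, \ldots, p-1\}$ build the tuple where the first $j$ "extra" copies use $\phi_a$ and the rest... Actually I would instead reduce Case B to Case A: if $p^k \in \cd(P_{p^t})$ arises only via Case B, I claim $p^k$ also arises via Case A applied at a lower level, or directly that $\Irr_k$ is large by a counting argument on orbits of $p$-tuples. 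I would invoke that the number of $C_p$-orbits of $p$-tuples $(\phi_1,\ldots,\phi_p)$ from $\Irr(G)$ with a fixed product of degrees and not all entries equal is divisible considerations: $p$-tuples with not-all-equal entries split into free $C_p$-orbits, so their count is a multiple of $p$; picking degrees to multiply to $p^{k-1}$ I get a multiple of $p$ many such orbits, each giving a distinct degree-$p^k$ character, hence at least $p$ of them (the count is positive since we are in this case, hence $\geq p$).

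\textbf{Main obstacle.} The delicate point is Case B: showing that whenever a degree $p^k$ is realized by an induced character from a not-all-equal $p$-tuple, there are actually at least $p$ such characters. The free-$C_p$-action counting argument sketched above is the intended resolution — the set of non-constant $p$-tuples with a prescribed degree-product is a union of free $C_p$-orbits, so its cardinality, and hence the number of corresponding irreducible characters of $P_{p^t}$ (which is that cardinality divided by $p$), is at least $1$, meaning the orbit count is at least $1$... wait, that only gives $1$. So the real obstacle is that Case B alone might give only a single character of degree $p^k$, and one must show that in that situation Case A (or the base-level abelian quotient) supplies the remaining $p-1$. Concretely: if the only way to write $p^{k-1}$ as an ordered product of $p$ character degrees of $G$ uses the degrees of a single fixed character $\phi$ up to reordering, then $p^{k-1} = \phi(1)^p$, forcing $p \mid k-1$... this needs care, and I expect the heart of the argument is a clean case analysis showing that the "only one character in $\Irr_k$" scenario for $P_{p^t}$ is impossible, leveraging that $\cd(P_{p^{t-1}})$ is closed under the relevant arithmetic (a fact that itself follows from Lemma~\ref{lem: cd_P}). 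Getting this bookkeeping right — and in particular handling the interaction between the two families of characters of the wreath product — is where the work lies.
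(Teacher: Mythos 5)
Your overall strategy (induction on $t$ via $P_{p^t}=P_{p^{t-1}}\wr P_p$, splitting according to the two families of irreducible characters of $G\wr C_p$) is exactly the paper's, and your Case A is handled correctly and identically: the $p$ characters $\lX(\phi;\theta)$, $\theta\in\Irr(C_p)$, are distinct of the same degree. But Case B is left genuinely open. The idea you mention in passing --- replace one coordinate $\phi_j$ by a different character of the same degree --- is the right one, and the point you miss is that its applicability is not in doubt: the inductive hypothesis, applied to $G=P_{p^{t-1}}$ and the degree $\phi_1(1)\in\cd(G)$, already guarantees \emph{at least $p$ distinct} characters $\tau_1,\dots,\tau_p\in\Irr(G)$ of degree $\phi_1(1)$. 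The scenario ``no such alternative exists, so some degree class is a singleton'' that sends you down the counting-argument rabbit hole simply cannot occur under the induction. Substituting $\tau_1,\dots,\tau_p$ into the first coordinate yields $p$ tuples whose underlying multisets are pairwise distinct (they differ in the multiplicity of $\tau_j$), hence lie in distinct $C_p$-orbits and induce $p$ distinct irreducible characters of degree $p^k$. Your fallback argument --- counting free $C_p$-orbits of non-constant tuples with prescribed degree product --- fails for the reason you yourself spot: it only shows the number of orbits is at least $1$, not at least $p$.

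Two further checks are needed to make the substitution argument airtight, and the paper does them via a small case split. If some $\theta_x(1)\neq\theta_1(1)$, then every tuple $(\tau_j,\theta_2,\dots,\theta_p)$ is automatically non-constant, so it induces irreducibly. If instead all $\theta_i$ have equal degree, a naive substitution could produce a constant tuple (e.g.\ $(\tau_j,\tau_j,\dots,\tau_j)$), whose induction is reducible; one fixes this by using tuples such as $(\tau_1,\dots,\tau_1,\tau_j)$ for $j\in[2,p]$ together with $(\tau_2,\dots,\tau_2,\tau_1)$, which are non-constant, have the correct degree product, and lie in $p$ distinct orbits. With the inductive hypothesis invoked where you hesitated and this non-constancy check added, your proof closes and coincides with the paper's.
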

\begin{proof}
	We proceed by induction on $t$. 
	If $t=1$ then we know that the statement holds as $\Irr(P_p)=\mathrm{Irr}_0(P_p)$ has size $p$. 
	The elements of $\Irr(P_p)$ are denoted by $\phi_0,\phi_1,\ldots, \phi_{p-1}$, where we conventionally set $\phi_0$ to be the trivial character. 
	Let $t\geq 2$, and let $\psi\in \Irr_k(P_{p^t})$. 
	If $\psi=\lX(\theta; \phi_i)$ for some $\theta\in \Irr_k(P_{p^{t-1}})$ and $i \in [0,p-1]$, then $ \lX(\theta; \phi_j)\in\mathrm{Irr}_k(P_{p^t})$ for all $j\in [0,p-1]$. Hence 
	$|\mathrm{Irr}_k(P_{p^t})|\geq p$. 
	Otherwise $\psi = (\theta_1 \times \cdots \times \theta_p)^{P_{p^t}}$ where $\theta_1, \dots ,\theta_p \in \Irr(P_{p^{t-1}})$ are not all equal. If there exists $x\in[1,p]$ such that $\theta_1(1)\neq \theta_x(1)$ then we define $\eta_1,\ldots,\eta_p\in\mathrm{Irr}_k(P_{p^t})$ as follows. For any $j\in [1,p]$ we let
	$$\eta_j=(\tau_j\times\theta_2\times\cdots\times\theta_p)^{P_{p^t}},$$
	where $\tau_1,\tau_2,\ldots, \tau_p$ are $p$ distinct irreducible characters of $P_{p^{t-1}}$ of degree  $\theta_1(1)$. These exist by inductive hypothesis. 
	On the other hand, if $\theta_1(1)=\theta_x(1)$ for all $x\in [1,p]$ then we let $$\eta_1=(\tau_2\times\cdots \times \tau_2\times\tau_1)^{P_{p^t}},\ \text{and}\ \eta_j=(\tau_1\times\cdots\times\tau_1\times\tau_j)^{P_{p^t}},\ \text{for all}\ j\in [2,p].$$
As before, here we chose $\tau_1,\tau_2,\ldots, \tau_p$ to be $p$ distinct irreducible characters of $P_{p^{t-1}}$ of degree $\theta_1(1)$. These exist by inductive hypothesis. In both cases $\eta_1,\ldots, \eta_p$ are $p$ distinct elements of $\mathrm{Irr}_k(P_{p^t})$. 
Hence $|\mathrm{Irr}_k(P_{p^t})|\geq p$.
\end{proof}

The next lemma shows that $P_n$ has irreducible characters of each degree $1,p,p^2,\ldots, p^{\alpha_n}$.

\begin{lemma} \label{lem: cd_P}
Let $n\in\mathbb{N}$. Then $\cd(P_{n})=\{p\6k\ |\ k\in [0,\alpha_{n}]\}$. 
\end{lemma}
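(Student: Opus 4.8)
The plan is to prove $\cd(P_n)=\{p^k \mid k\in[0,\alpha_n]\}$ by first establishing the statement for prime powers $n=p^t$ by induction on $t$, and then deducing the general case from the direct product decomposition $P_n\cong P_{p^{n_1}}^{\times a_1}\times\cdots\times P_{p^{n_t}}^{\times a_t}$ coming from the $p$-adic expansion of $n$.

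For the prime power case, the base cases $t=0,1$ are immediate since $P_1$ is trivial and $P_p\cong C_p$ is abelian, while $\alpha_1=\alpha_p=0$. For the inductive step, write $P_{p^t}=P_{p^{t-1}}\wr C_p$ and use the description of $\Irr(P_{p^{t-1}}\wr C_p)$ recalled in the preliminaries: every $\psi$ is either of the form $\mathcal{X}(\phi;\theta)$ with $\phi\in\Irr(P_{p^{t-1}})$, $\theta\in\Irr(C_p)$, in which case $\psi(1)=\phi(1)^p$; or of the form $(\phi_1\times\cdots\times\phi_p)\up^{P_{p^t}}$ with the $\phi_i$ not all equal, in which case $\psi(1)=p\cdot\prod_{i=1}^p\phi_i(1)$. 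By the inductive hypothesis $\cd(P_{p^{t-1}})=\{p^j \mid j\in[0,\alpha_{p^{t-1}}]\}$. From the first family we obtain exactly the degrees $p^{pj}$ for $j\in[0,\alpha_{p^{t-1}}]$; from the second family, taking $\phi_2=\cdots=\phi_p$ trivial and $\phi_1$ of degree $p^j$ (with $j\geq 1$ so that they are genuinely not all equal, or with $j=0$ using two distinct linear characters), we get the degree $p^{j+1}$ for every $j\in[0,\alpha_{p^{t-1}}]$, hence all of $\{p, p^2,\ldots, p^{\alpha_{p^{t-1}}+1}\}$; more generally the second family produces $p^{1+j_1+\cdots+j_p}$ with $\sum j_i$ ranging over $[0, p\,\alpha_{p^{t-1}}]$. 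Taking the union, the set of degrees is $\{p^m \mid m\in[0, \max(p\,\alpha_{p^{t-1}}+1,\ p\,\alpha_{p^{t-1}})]\}=\{p^m \mid m\in[0,\ p\,\alpha_{p^{t-1}}+1]\}$, and one checks $p\,\alpha_{p^{t-1}}+1=\alpha_{p^t}$ directly from the recursion $\alpha_{p^t}=(p^{t-1}-1)/(p-1)$. The key point making this work cleanly is that the two families overlap enough to fill in a contiguous block of exponents with no gaps.

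For general $n$ with $p$-adic expansion $n=\sum_{i=1}^t a_i p^{n_i}$, the character degrees of a direct product are the products of character degrees of the factors, so $\cd(P_n)=\{\prod_i p^{k_i} \mid 0\le k_i\le a_i\alpha_{p^{n_i}}\}$ once we know that $\cd(P_{p^{n_i}}^{\times a_i})=\{p^k \mid 0\le k\le a_i\alpha_{p^{n_i}}\}$; and this last fact follows from the prime-power case together with the elementary observation that if a group $Q$ has $\cd(Q)=\{p^k\mid 0\le k\le c\}$ then $\cd(Q^{\times a})=\{p^k\mid 0\le k\le ac\}$ (again because a contiguous block of exponents is closed under the relevant additions). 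Summing, $\cd(P_n)=\{p^k \mid 0\le k\le \sum_i a_i\alpha_{p^{n_i}}\}$, and since $\alpha_n=\sum_i \alpha_{p^{n_i}}$ is defined with each power of $p$ in the expansion contributing once — wait: here I must be careful, as $\alpha_n=\sum_{i=1}^t\alpha_{p^{n_i}}$ already sums over all $t$ parts counted with multiplicity (the expansion $n=\sum_{i=1}^t p^{n_i}$ in the \emph{Preliminary results} section lists the parts $n_i$ with repetition, not grouped as $a_i p^{n_i}$). So in the notation of that section $\alpha_n$ is exactly $\sum$ over all (possibly repeated) parts, matching $\sum_i a_i\alpha_{p^{n_i}}$ in grouped notation, giving $\cd(P_n)=\{p^k\mid k\in[0,\alpha_n]\}$ as claimed.

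The main obstacle I anticipate is purely bookkeeping: verifying that the exponents obtained from the two families in the wreath product step really form a gap-free interval $[0,\alpha_{p^t}]$ and that the arithmetic identity $p\,\alpha_{p^{t-1}}+1=\alpha_{p^t}$ holds, i.e. that $p\cdot\frac{p^{t-2}-1}{p-1}+1=\frac{p^{t-1}-1}{p-1}$ for $t\ge 2$ (with the convention $\alpha_p=0$ handled separately for $t=2$). There is no deep difficulty here; one just has to be attentive to the small cases and to the fact that ``not all equal'' forces at least one factor of $p$ in the induced-character degrees, which is precisely what supplies the ``$+1$'' and prevents the exponent set from stalling at $p\,\alpha_{p^{t-1}}$.
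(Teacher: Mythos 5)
Your overall strategy coincides with the paper's: induct on $t$ for $n=p^t$ using the two families of irreducible characters of $P_{p^{t-1}}\wr C_p$, verify the identity $\alpha_{p^t}=p\,\alpha_{p^{t-1}}+1$, and reduce the general case to the direct product decomposition of $P_n$. Most of the bookkeeping is fine: for $0<s<p\,\alpha_{p^{t-1}}$ one can always write $s=j_1+\cdots+j_p$ with the $j_i\in[0,\alpha_{p^{t-1}}]$ \emph{not all equal}, so the ``not all equal'' requirement on the $\phi_i$ comes for free (characters of different degrees are distinct), and $s=0$ is handled by two distinct linear characters, as you note.

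There is, however, a genuine gap at the endpoint $s=p\,\alpha_{p^{t-1}}$, i.e.\ at the maximal degree $k=\alpha_{p^t}$. That exponent can only come from the induced family with \emph{every} $\phi_i$ of the top degree $p^{\alpha_{p^{t-1}}}$ (the family $\lX(\phi;\theta)$ tops out at $p^{p\alpha_{p^{t-1}}}<p^{\alpha_{p^t}}$), and then ``not all equal'' forces you to exhibit two distinct irreducible characters of $P_{p^{t-1}}$ of that maximal degree. You assert that $\sum_i j_i$ ``ranges over $[0,p\,\alpha_{p^{t-1}}]$'' without justifying this endpoint; it is not automatic that a $p$-group has more than one irreducible character of its largest degree, and this is precisely what the paper supplies via Lemma~\ref{lem: neq irrid}, which together with the inductive hypothesis that $p^{\alpha_{p^{t-1}}}\in\cd(P_{p^{t-1}})$ yields at least $p$ such characters. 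You need to cite that lemma (or reproduce its inductive construction) to close the argument; the remaining steps --- the identity $p\,\alpha_{p^{t-1}}+1=\alpha_{p^t}$, the upper bound on the degrees of both families, and the passage to general $n$ via products of contiguous exponent intervals, with $\alpha_n$ summed over the parts of the $p$-adic expansion counted with multiplicity --- are correct.
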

\begin{proof}
	Let us first suppose that $n=p^t$ is a power of $p$ and proceed by induction on $t$.
	The case $t=1$ is trivial, since $P_p$ is cyclic and $\alpha_p=0$.
	If $t\geq 2$, notice that $\alpha_{p^t}=1 +p \alpha_{p^{t-1}}$. Let $k\in [0,\alpha_{p^t} -1]$, and let $q\leq \alpha_{p^{t-1}}$ and $r\in[0,p-1]$ be such that $k=qp+r$.
	If $r=0$, by inductive hypothesis there exists $\phi \in \Irr(P_{p^{t-1}})$ such that $\phi(1)=p^q$. Hence for any $\psi \in \Irr(P_p)$, $\lX (\phi; \psi) \in \Irr(P_{p^t})$ has degree $p^k$.
	If $r>0$, then $q< \alpha_{p^{t-1}}$. By inductive hypothesis, there exist $\phi_1,\dots, \phi_p \in \Irr(P_{p^{t-1}})$ such that $\phi_i(1)=p^{q+1}$ for every $i\in [1,r]$, and $\phi_j(1)=p^q$ for every $j\in [r+1,p]$.
	Hence $(\phi_1 \times \cdots \times \phi_r \times \phi_{r+1} \times \cdots \times \phi_p)^{P_{p^t}} \in \Irr(P_{p^t})$ has degree $p^k$.
	Finally, let $k=\alpha_{p^t}$. By inductive hypothesis and by Lemma \ref{lem: neq irrid}, there exist $\phi_1 , \dots , \phi_p \in \Irr(P_{p^{t-1}})$ not all equal and such that $\phi_i(1)=p^{\alpha_{p^{t-1}}}$ for all $i\in [1,p]$. Hence $(\phi_1 \times \cdots \times \phi_p)^{P_{p^t}} \in \Irr(P_{p^t})$ has degree $p^k$.
	This concludes the proof in the case $n=p^t$, for $t\in \mathbb{N}$.	
	
	The case where $n$ is not a power of $p$ follows easily. Indeed, if $n=\sum_{i=1}\6{t}p\6{n_i}$ is the $p$-adic expansion of $n$ then $P_n \cong P_{p^{n_1}} \times P_{p^{n_2}} \times \cdots \times P_{p^{n_t}}$.
\end{proof}

Let $n,k\in \mathbb{N}$. As we mentioned in the introduction, it is convenient to think of the set $\Omega_n^k$ as a subset of $\mathcal{P}(n)$. More precisely, for $\lambda \in \mathcal{P}(n)$, we will sometimes write $\lambda \in \Omega_n^k$ instead of $\chi^\lambda \in \Omega_n^k $.

The following is an important ingredient when proving statements by induction. For an odd prime $p$ let $\chi$ be a non-linear character of $\fS_n$ and suppose that $\chi_{P_n}$ has an irreducible constituent of degree $p^k$. Then it has at least two distinct irreducible constituents. 

\begin{lemma} \label{lem: neq const}
	Let $n\in\mathbb{N}$ be such that $n\geq p$ and let $\lambda\in\Omega_n^k \smallsetminus\{(n), (1^n)\}$ for some $k\in [0,\alpha_n]$. Then there are at least two distinct irreducible constituents of $(\chi\6\lambda)_{P_n}$ of degree $p\6k$. 
\end{lemma}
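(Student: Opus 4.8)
The plan is to reduce to the case of prime powers and then argue by induction on the number of wreath-product layers, exploiting the product structure of $P_n$ and the fact that characters of $\fS_n$ indexed by $\lambda\notin\{(n),(1^n)\}$ cannot behave like linear characters. First I would observe that if $\lambda\notin\{(n),(1^n)\}$ then $\chi^\lambda$ is non-linear, so $\chi^\lambda(1)>1$; since $P_n$ is a $p$-group and $p\mid\chi^\lambda(1)$ for most $\lambda$, one does not immediately get the claim, so the real input must come from the branching structure. The key reduction: write the $p$-adic expansion $n=\sum a_ip^{n_i}$, so $P_n$ is a direct product of groups $P_{p^{n_i}}$ (with multiplicities), and $\chi^\lambda_{P_n}$ is obtained by restricting through a Young subgroup $\fS_{p^{n_1}}^{\times a_1}\times\cdots$. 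By the Littlewood--Richardson rule, a constituent of $\chi^\lambda_{P_n}$ of degree $p^k$ arises from a product $\phi^{(1)}_1\times\cdots$ of characters $\phi^{(i)}_j\in\Irr(P_{p^{n_i}})$ with $\prod\phi^{(i)}_j(1)=p^k$, where the outer multiplicity is a sum of products of $\mathcal{LR}$-coefficients. Since $\lambda\neq(n),(1^n)$, at least one of the constituents $\chi^{\mu}$ appearing when we restrict $\chi^\lambda$ to the relevant Young subgroup $\fS_m\times\fS_{n-m}$ is itself non-linear, or the partition $\lambda$ genuinely fits inside a box strictly larger than $1\times 1$; in either case I would locate one factor $P_{p^{n_i}}$ and one constituent $\theta$ of the corresponding piece which is ``movable''.

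The heart of the argument is the prime-power case: show that for $\lambda\in\Omega_{p^t}^k\smallsetminus\{(p^t),(1^{p^t})\}$ the restriction $(\chi^\lambda)_{P_{p^t}}$ has $\ge 2$ distinct constituents of degree $p^k$. I would induct on $t$. For $t=1$, $P_p\cong C_p$ has only linear characters so $k=0$; and $\chi^\lambda_{P_p}$ for $\lambda\neq(p),(1^p)$ is a sum of linear characters of degree $p\ge 2$ greater than $1$, hence cannot be $\phi$ alone — here I would use that $\chi^\lambda$ restricted to $C_p$ is not a multiple of a single linear character unless $\lambda\in\{(p),(1^p)\}$ (because $\chi^\lambda(g)$ for a $p$-cycle $g$ equals $0$ or $\pm 1$ by the Murnaghan--Nakayama rule, forcing the restriction, which has norm $\ge 2$ over $C_p$ when $\chi^\lambda(1)>1$, to involve at least two characters, using $\langle\chi^\lambda_{C_p},\chi^\lambda_{C_p}\rangle = \frac1p(\chi^\lambda(1)^2 + (p-1)\chi^\lambda(g)^2)$, which is $\ge 2$ once $\chi^\lambda(1)\ge p$ unless this forces a single constituent, and one checks the single-constituent case only happens for the two exceptional $\lambda$). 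For $t\ge 2$, use $P_{p^t}=P_{p^{t-1}}\wr P_p$ and $\Irr(\fS_{p^t})$ restricted first to $\fS_{p^{t-1}}\wr\fS_p$, then down to $P_{p^t}$. A constituent $\psi$ of $\chi^\lambda_{P_{p^t}}$ of degree $p^k$ is either of type $\lX(\theta;\phi_i)$ or of type $(\theta_1\times\cdots\times\theta_p)\!\uparrow$; in the first case the $p$ characters $\lX(\theta;\phi_j)$, $j\in[0,p-1]$, are all constituents of $\chi^\lambda_{P_{p^t}}$ (this needs that the inflation is ``invisible'' to $P_{p^t}$ after restricting further, or a direct argument that restricting $\chi^\lambda$ to the base group $P_{p^{t-1}}^{\times p}$ already contains $\theta^{\times p}$-type pieces that extend in all $p$ ways) giving $p\ge 2$ distinct ones; in the second case the outer multiplicity of $(\theta_1\times\cdots\times\theta_p)\!\uparrow$ forces, via $\mathcal{LR}$-symmetry and the non-triviality of $\lambda$, another rearrangement or another choice of one $\theta_j$ (using the inductive hypothesis applied to a suitable $\mu\ne(p^{t-1}),(1^{p^{t-1}})$, provided by Lemma~\ref{lem: LR} to control the first row).

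The main obstacle I anticipate is precisely this bookkeeping in the ``$(\theta_1\times\cdots\times\theta_p)\!\uparrow$'' case: one must rule out the possibility that $\chi^\lambda_{P_{p^t}}$ contains exactly one such induced character of degree $p^k$ and nothing else of that degree. The way through is to pull back to the base group $P_{p^{t-1}}^{\times p}$: the restriction of $\chi^\lambda$ there is $\fS_p$-invariant, and if it contained a unique $P_p$-orbit of $\theta_1\times\cdots\times\theta_p$ of the right total degree $p^k$, one would need all the ``action'' concentrated on a single orbit — but since $\lambda\ne(n),(1^n)$, restricting $\chi^\lambda$ to $\fS_{p^{t-1}}\times\fS_{p^t-p^{t-1}}$ already yields a non-linear constituent on the first factor (or the first-row bound from Lemma~\ref{lem: LR} leaves room), and by induction that non-linear piece splits into $\ge 2$ constituents of the appropriate degree, which then propagate to $\ge 2$ distinct constituents upstairs after re-inducing. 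A clean alternative I would try first is the following norm computation that sidesteps cases: show $\langle(\chi^\lambda)_{P_n},\psi\rangle^2 < \sum_{\theta\in\Irr_k(P_n)}\langle(\chi^\lambda)_{P_n},\theta\rangle^2$ by comparing against the multiplicity of the trivial character (known to be positive by \cite[Theorem 3.1]{GN}, i.e.\ $\Omega_n^0=\Irr(\fS_n)$) and using that a non-linear $\chi^\lambda$ has $\chi^\lambda(1)>1$; if the single-constituent hypothesis held, $(\chi^\lambda)_{P_n}$ restricted appropriately would be forced to be a multiple of one character on a large subgroup, contradicting Lemma~\ref{lem: elements in N} / $C_{\fS_{p^n}}(P_{p^n})=Z(P_{p^n})$ exactly as in the $t=1$ base case, and induction closes the gap. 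Finally the general (non-prime-power) case follows by noting $P_n$ is a direct product, $\Irr(P_n)$ is a product of $\Irr$'s of the factors, and a partition $\lambda\ne(n),(1^n)$ forces, through the Young restriction and Lemma~\ref{lem: LR}, at least one factor to receive a ``non-exceptional'' constituent to which the prime-power case applies, yielding the required second constituent.
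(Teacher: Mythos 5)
Your main proposed route (induction on the wreath-product layers of $P_{p^t}$, splitting a degree-$p^k$ constituent into the two Clifford types) has a genuine gap at exactly the point you flag. In case (i) you need that if one extension $\lX(\theta;\phi_i)$ of $\theta^{\times p}$ appears in $(\chi^\lambda)_{P_{p^t}}$ then all $p$ of them do; this is not automatic (Gallagher only tells you the constituents over $\theta^{\times p}$ are among the $\lX(\theta;\phi_j)$, and the multiplicities $c_j$ are only constrained by $\sum_j c_j=[(\chi^\lambda)_B,\theta^{\times p}]$), and this case cannot be avoided: for $k=0$ and $t\ge 2$ every linear constituent is of type (i), so your induction never gets off the ground there. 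Case (ii) as written also fails in an edge case: the tuple $(\mu_1,\dots,\mu_p)$ is handed to you by the given constituent, and it may happen that every $\mu_i$ lies in $\{(p^{t-1}),(1^{p^{t-1}})\}$ (so all $\theta_i$ are linear and the inductive hypothesis applies to no factor); you would then have to produce a second constituent from a different tuple and check it has the same degree and is genuinely new, which your sketch does not do.

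The ``clean alternative'' you mention at the end is essentially the paper's actual proof, but you have not supplied its two load-bearing ingredients. The paper shows, via the Glauberman correspondence together with Lemma \ref{lem: elements in N} (i.e.\ $C_{P}(H)=1$ for a $p'$-complement $H$ of $P$ in $N=N_{\fS_{p^t}}(P)$, so $|\mathrm{Irr}_H(P)|=|\mathrm{Irr}(C_P(H))|=1$), that the \emph{only} $N$-invariant irreducible character of $P=P_{p^t}$ is the trivial one; then, since $(\chi^\lambda)_P$ is $N$-invariant, any non-trivial constituent of degree $p^k$ drags its entire $N$-orbit (of size $\ge 2$) along with it. For $k\ge 1$ any constituent of degree $p^k$ is non-trivial, but for $k=0$ one must rule out that the trivial character is the unique linear constituent; the paper does this by citing \cite[Lemma 4.3]{GL2}, which guarantees a non-trivial linear constituent for $\lambda\notin\{(n),(1^n)\}$ — your appeal to \cite[Theorem 3.1]{GN} (positivity of some linear multiplicity) is not sufficient for this step. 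Your $t=1$ norm computation and the final reduction of the general $n$ to the prime-power case via the direct product structure of $P_n$ are fine, but as it stands the core of the argument for $t\ge 2$ is not closed.
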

\begin{proof}
Let us first suppose that $n=p\6t$, and let us set $P=P_{p^t}$ and $N=N_{\fS_{p^t}}(P_{p^t})$. We observe that the only $N$-invariant irreducible character of $P$ is the trivial one. 
To show this, we let $\mathrm{Irr}_K(P)$ denote the set of $K$-invariant irreducible characters of $P$, for any $K\leq N$. 
Let $H$ be a $p'$-complement of $P$ in $N$. Clearly $\mathrm{Irr}_N(P)=\mathrm{Irr}_H(P)$. On the other hand the set $C_P(H)=\{x\in P\ |\ x\6h=x,\ \text{for all}\ h\in H\}$ consists of the only identity element, by Lemma \ref{lem: elements in N}. Using the Glauberman correspondence \cite[Theorem 13.1]{IBook}, we get that $|\mathrm{Irr}_H(P)|=|\mathrm{Irr}(C_P(H))|=1$. It follows that $\mathrm{Irr}_N(P)=\{1_{P}\}$, as claimed. 

Since $\lambda\notin \{(n), (1^n)\}$, by \cite[Lemma 4.3]{GL2} we know that $(\chi\6\lambda)_{P}$ necessarily admits a non-trivial linear constituent (direct computations show that this holds also in the case $(p, n, \lambda)=(3, 9, (3,3,3))$, which is not covered by the lemma). It follows that for any $k\in\mathbb{N}$ such that  $\lambda\in\Omega_n^k$, we can find a non-trivial $\theta\in\mathrm{Irr}_k(P)$ such that $\theta$ is a constituent of $(\chi\6\lambda)_{P}$. Since $\chi\6\lambda$ is $N$-invariant we deduce that every $N$-conjugate of $\theta$ is a constituent of $\chi\6\lambda$. The statement follows. 
Recalling the structure of $P_n$ described in Section \ref{sec2.2}, we observe that the case where $n$ is not a prime power is an easy consequence of the prime power case. 
\end{proof}

\begin{definition}
	Let $G$ be a finite group and let $H$ be a $p$-subgroup of $G$. Given a character $\theta$ of $G$, we let 
	$\mathrm{cd}(\theta_H)$ be the set of degrees of the irreducible constituents of $\theta_H$. Moreover, we let
	$\partial_H(\theta)$ be the non-negative integer defined as follows: $$\partial_H(\theta)=\mathrm{max}\{k\in\mathbb{N}\ |\ p\6k\in \mathrm{cd}(\theta_H)\} .$$
\end{definition}

\begin{proposition}\label{prop: partial}
	Let $n\in\mathbb{N}$ and let $Y=(\fS_{p\6{n-1}})\6{\times p}\leq \fS_{p\6{n}}$ be such that $B\leq Y$, where $B=(P_{p\6{n-1}})\6{\times p}$ is the base group of $P_{p\6n}$. Let $\lambda\in\mathcal{B}_{p\6n}(p\6{n}-1)$. Then $$\partial_{P_{p\6n}}(\chi\6\lambda)=1+\mathrm{max}\{\partial_{B}(\chi\6{\nu_1}\times\cdots\times\chi\6{\nu_p})\ |\ \chi\6{\nu_1}\times\cdots\times\chi\6{\nu_p}\in\mathrm{Irr}(Y)\ \text{and}\ \mathcal{LR}(\lambda; \nu_1,\ldots, \nu_p)\neq 0\}.$$
\end{proposition}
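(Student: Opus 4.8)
The plan is to analyse $(\chi^\lambda)_{P_{p^n}}$ through the normal subgroup $B\trianglelefteq P_{p^n}$. Writing $P:=P_{p^{n-1}}$, we have $P_{p^n}=B\rtimes P_p\cong P\wr C_p$ with $P_{p^n}/B\cong C_p$. Since $B\le Y$, restricting in stages and applying the Littlewood--Richardson rule yields
\[
(\chi^\lambda)_B=\bigl((\chi^\lambda)_Y\bigr)_B=\sum_{\nu_1,\dots,\nu_p}\mathcal{LR}(\lambda;\nu_1,\dots,\nu_p)\,\bigl((\chi^{\nu_1})_P\otimes\cdots\otimes(\chi^{\nu_p})_P\bigr),
\]
so the irreducible constituents of $(\chi^\lambda)_B$ are precisely the characters $\theta_1\times\cdots\times\theta_p$ where $\theta_i$ is an irreducible constituent of $(\chi^{\nu_i})_P$ for some $p$-tuple with $\mathcal{LR}(\lambda;\nu_1,\dots,\nu_p)\neq 0$. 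As $\partial_B(\chi^{\nu_1}\times\cdots\times\chi^{\nu_p})=\sum_{i=1}^p\partial_P(\chi^{\nu_i})$, denoting by $m$ the maximum on the right-hand side of the claimed equality, the task becomes to show $\partial_{P_{p^n}}(\chi^\lambda)=1+m$.

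The second step combines Clifford theory for $B\trianglelefteq P_{p^n}$ with the description of $\Irr(P\wr C_p)$ recalled in Section~\ref{sec:prelim}: every irreducible constituent $\psi$ of $(\chi^\lambda)_{P_{p^n}}$ is either of the form $(\theta_1\times\cdots\times\theta_p)\!\uparrow^{P_{p^n}}_{B}$ for a constituent $\theta_1\times\cdots\times\theta_p$ of $(\chi^\lambda)_B$ whose factors are not all equal, or of the form $\lX(\theta;\eta)$ with $\theta^{\times p}$ a constituent of $(\chi^\lambda)_B$. For ``$\le$'': in the first case $\log_p\psi(1)=1+\sum_i\log_p\theta_i(1)\le 1+\sum_i\partial_P(\chi^{\nu_i})\le 1+m$; in the second case $\theta$ is a constituent of every $(\chi^{\nu_i})_P$ for a single admissible tuple, so $\log_p\psi(1)=p\log_p\theta(1)\le\sum_i\partial_P(\chi^{\nu_i})\le m$. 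For ``$\ge$'' when $m>0$: I would fix a tuple attaining $m$, set $k_i=\partial_P(\chi^{\nu_i})$, and pick constituents $\theta_i$ of $(\chi^{\nu_i})_P$ of degree $p^{k_i}$; since $\sum k_i=m>0$ some $k_j\ge 1$, so $\chi^{\nu_j}$ is non-linear and $\nu_j\in\Omega_{p^{n-1}}^{k_j}\smallsetminus\{(p^{n-1}),(1^{p^{n-1}})\}$, and Lemma~\ref{lem: neq const} supplies two distinct candidates for $\theta_j$ --- enough freedom to arrange that $\theta_1,\dots,\theta_p$ are not all equal. Then $\theta_1\times\cdots\times\theta_p$ is a constituent of $(\chi^\lambda)_B$, so by Frobenius reciprocity $(\theta_1\times\cdots\times\theta_p)\!\uparrow^{P_{p^n}}_{B}$ is an irreducible constituent of $(\chi^\lambda)_{P_{p^n}}$ of degree $p^{1+m}$.

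It remains to treat $m=0$, where the target value is $1$ and it suffices to produce one irreducible constituent of degree $p$. The key point is that some admissible tuple has a non-linear entry $\chi^{\nu_i}$: otherwise $(\chi^\lambda)_Y$ would be a non-negative combination of tensor products of copies of $\triv$ and $\mathrm{sgn}$, so $\fA_{p^{n-1}}^{\times p}\le\ker\bigl((\chi^\lambda)_Y\bigr)$ and hence $\fA_{p^{n-1}}^{\times p}\le\ker\chi^\lambda$; since $p^{n-1}\ge 3$, the normal closure of $\fA_{p^{n-1}}^{\times p}$ in $\fS_{p^n}$ is $\fA_{p^n}$, which would force $\chi^\lambda$ linear, i.e.\ $\lambda\in\{(p^n),(1^{p^n})\}$, contradicting $\lambda\in\mathcal{B}_{p^n}(p^n-1)$. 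Fix such a $\nu_i$. As $m=0$, every constituent of $(\chi^{\nu_i})_P$ is linear, and they are not all trivial: otherwise $P\le\ker\chi^{\nu_i}$, and since $p$ is odd the normal closure of $P$ in $\fS_{p^{n-1}}$ is $\fA_{p^{n-1}}$, again forcing $\chi^{\nu_i}$ linear. Choosing a non-trivial linear constituent $\theta_i$ of $(\chi^{\nu_i})_P$ and $\theta_j=\triv_P$ for $j\neq i$ (possible by \cite{GN}), the factors $\theta_1,\dots,\theta_p$ are not all equal, and as before $(\theta_1\times\cdots\times\theta_p)\!\uparrow^{P_{p^n}}_{B}$ is an irreducible constituent of degree $p$.

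The hard part throughout is precisely the requirement that the chosen $\theta_i$ be not all equal: a priori $(\chi^{\nu_i})_P$ might have a unique constituent of the relevant degree, and these unique constituents might coincide across the $p$ factors, leaving no suitable induced constituent of the right degree available. Eliminating this scenario is exactly the role of Lemma~\ref{lem: neq const}, and it is the point at which the argument breaks down for $p=2$.
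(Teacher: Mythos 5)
Your argument is essentially the paper's: decompose $(\chi^\lambda)_B$ via the Littlewood--Richardson rule, use the dichotomy for $\Irr(P_{p^{n-1}}\wr C_p)$ to bound $\partial_{P_{p^n}}(\chi^\lambda)$ from above, and use Lemma~\ref{lem: neq const} to manufacture a not-all-equal tuple $\theta_1\times\cdots\times\theta_p$ whose induction realises the value $1+m$. The upper bound and the case $m>0$ are correct as written (the paper treats the upper bound slightly more tersely, via the observation that a constituent of $\varphi_B$ has degree $p^N$ or $p^{N-1}$, but the content is the same).

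There is, however, one false step in your $m=0$ case: you take $\theta_j=\triv_P$ for $j\neq i$ ``by \cite{GN}''. Theorem~3.1 of \cite{GN} only guarantees \emph{some} linear constituent of $(\chi^{\nu_j})_{P_{p^{n-1}}}$, not the trivial one; for instance $[\chi^{(2,1)}\downarrow_{P_3},\triv_{P_3}]=0$, since $\chi^{(2,1)}$ restricted to $C_3$ is the sum of the two non-trivial linear characters. So the product you write down need not be a constituent of $(\chi^\lambda)_B$. The repair is immediate and is exactly what the paper does (it handles $m=0$ and $m>0$ uniformly): your normal-closure argument shows $\chi^{\nu_i}$ is non-linear, hence $\nu_i\in\Omega_{p^{n-1}}^{0}\smallsetminus\{(p^{n-1}),(1^{p^{n-1}})\}$, so Lemma~\ref{lem: neq const} gives \emph{two distinct} linear constituents of $(\chi^{\nu_i})_{P_{p^{n-1}}}$; choosing arbitrary linear constituents at the other positions, one of the two choices at position $i$ makes the factors not all equal. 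With that one-line fix the proof is complete.
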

\begin{proof}
	Let $M=1+\mathrm{max}\{\partial_{B}(\chi\6{\nu_1}\times\cdots\times\chi\6{\nu_p})\ |\ \chi\6{\nu_1}\times\cdots\times\chi\6{\nu_p}\in\mathrm{Irr}(Y)\ \text{and}\ \mathcal{LR}(\lambda; \nu_1,\ldots, \nu_p)\neq 0\}.$
	Let $\mu_1,\dots ,\mu_p \in \mathcal{P}(p^{n-1})$ be such that $\mathcal{LR}(\lambda; \mu_1,\ldots, \mu_p)\neq 0$ and $M=1+\partial_{B}(\chi\6{\mu_1}\times\cdots\times\chi\6{\mu_p})$. 
Since $\lambda\notin\{(n), (1\6n)\}$, we can assume that $\mu_1,\ldots, \mu_p$ are not all in $\{(p\6{n-1}), (1\6{p\6{n-1}})\}$.
	Moreover, let $\phi$ be an irreducible constituent of $ (\chi\6{\mu_1}\times\cdots\times\chi\6{\mu_p})_B$ such that $\phi(1)=p^{M-1}$.
	By Lemma \ref{lem: neq const}, we can take $\phi=\phi_1 \times \cdots \times \phi_p$ with $\phi_1,\dots ,\phi_p \in \Irr(P_{p^{n-1}})$ not all equal. Hence $\phi^{P_{p^n}} \in \Irr(P_{p^n})$, it has degree $p^M$ and $\left[ (\chi^\lambda)_{P_{p^n}}, \phi^{P_{p^n}}  \right] \neq 0$. Thus $p^M\in \cd((\chi^\lambda)_{P_{p^n}})$.
		
	Now suppose for a contradiction that there exists an integer $N>M$ such that $p^N \in \cd((\chi^\lambda)_{P_{p^n}})$. Then there exists $\varphi \in \Irr(P_{p^n})$ such that $\left[ (\chi^\lambda)_{P_{p^n}} , \varphi \right]\neq 0$ and $\varphi(1)=p^N$.
	Let $\phi_1\times\cdots\times\phi_p$ be an irreducible constituent of $\varphi_B$. Hence there exist $\mu_1, \dots ,\mu_p \in \mathcal{P}(p^{n-1})$ such that $\mathcal{LR}(\lambda; \mu_1,\ldots, \mu_p)\neq 0$ and $\left[ \phi_1 \times \cdots \times \phi_p, (\chi^{\mu_1}\times \cdots \times \chi^{\mu_p} )_B \right] \neq 0$. We have that
	\[ M> \partial_{B}(\chi\6{\mu_1}\times\cdots\times\chi\6{\mu_p}) \geq N-1 ,\]
	since the degree of $\phi_1 \times \cdots \times \phi_p$ is either $p^N$ or $p^{N-1}$.
	Hence $M<N<M+1$, which is a contradiction.
\end{proof}

\begin{proposition}\label{prop: partial arbitrary n}
	Let $n$ be a natural number and let $n=\sum_{i=1}\6{t}p\6{n_i}$ be the $p$-adic expansion of $n$, where $n_1\geq n_2\geq\cdots \geq n_t\geq 0$. 
	Let $Y=\fS_{p\6{n_1}}\times\fS_{p\6{n_2}}\times  \cdots\times \fS_{p\6{n_t}}$ be such that $P_n \leq Y \leq \fS_n$, and 
	let $\lambda$ be a partition of $n$. 
	Then
	$$\partial_{P_n}(\chi\6\lambda)=\mathrm{max}\{\partial_{P_n}(\chi\6{\mu_1}\times\cdots\times\chi\6{\mu_t})\ |\ \chi\6{\mu_1}\times\cdots\times\chi\6{\mu_t}\in\mathrm{Irr}(Y)\ \text{and}\ \mathcal{LR}(\lambda; \mu_1,\ldots, \mu_t)\neq 0\}.$$
\end{proposition}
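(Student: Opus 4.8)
The plan is to reduce the general case to the prime-power case handled in Proposition \ref{prop: partial} by exploiting the direct-product decomposition $P_n \cong P_{p^{n_1}}\times\cdots\times P_{p^{n_t}}$ described in Section \ref{sec2.2}, together with the transitivity of restriction through the Young subgroup $Y$. First I would observe that since $P_n\le Y\le\fS_n$, for any $\psi\in\Irr(P_n)$ we have $[(\chi^\lambda)_{P_n},\psi]\neq 0$ if and only if there is some irreducible constituent $\chi^{\mu_1}\times\cdots\times\chi^{\mu_t}$ of $(\chi^\lambda)_Y$ with $[(\chi^{\mu_1}\times\cdots\times\chi^{\mu_t})_{P_n},\psi]\neq 0$; the constituents of $(\chi^\lambda)_Y$ are exactly those $\chi^{\mu_1}\times\cdots\times\chi^{\mu_t}$ with $\mathcal{LR}(\lambda;\mu_1,\ldots,\mu_t)\neq 0$, by definition of the Littlewood--Richardson coefficients recorded in Section \ref{sec:prelim}. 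This immediately gives the inequality $\partial_{P_n}(\chi^\lambda)\le\max\{\partial_{P_n}(\chi^{\mu_1}\times\cdots\times\chi^{\mu_t})\mid\mathcal{LR}(\lambda;\mu_1,\ldots,\mu_t)\neq 0\}$: any degree $p^k$ appearing in $\cd((\chi^\lambda)_{P_n})$ already appears in $\cd((\chi^{\mu_1}\times\cdots\times\chi^{\mu_t})_{P_n})$ for one such tuple.

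For the reverse inequality, fix a tuple $(\mu_1,\ldots,\mu_t)$ with $\mathcal{LR}(\lambda;\mu_1,\ldots,\mu_t)\neq 0$ realizing the maximum on the right-hand side, say equal to $k$. Then there is $\psi\in\Irr_k(P_n)$ which is a constituent of $(\chi^{\mu_1}\times\cdots\times\chi^{\mu_t})_{P_n}$; since $\chi^{\mu_1}\times\cdots\times\chi^{\mu_t}$ is itself a constituent of $(\chi^\lambda)_Y$, transitivity of restriction gives $[(\chi^\lambda)_{P_n},\psi]\neq 0$, so $p^k\in\cd((\chi^\lambda)_{P_n})$ and $\partial_{P_n}(\chi^\lambda)\ge k$. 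Combining the two inequalities yields the claimed equality. I expect this direction to be essentially formal once the first observation is in place, since no combinatorial input beyond Frobenius reciprocity and the definition of $\mathcal{LR}$ is needed.

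The step requiring the most care is making sure the quantities on both sides are well-defined and that the maximum on the right is taken over a nonempty set: the set of tuples $(\mu_1,\ldots,\mu_t)$ with $\mathcal{LR}(\lambda;\mu_1,\ldots,\mu_t)\neq 0$ is nonempty because $(\chi^\lambda)_Y$ is a nonzero character, so at least one constituent exists. One should also note that $\partial_{P_n}(\chi^{\mu_1}\times\cdots\times\chi^{\mu_t})$ makes sense even though $\chi^{\mu_1}\times\cdots\times\chi^{\mu_t}$ is a character of $Y$ rather than of $\fS_n$: the definition of $\partial_H(\theta)$ only requires $H$ to be a $p$-subgroup of the group on which $\theta$ is defined, and indeed $P_n\le Y$. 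Strictly speaking one does not even need Proposition \ref{prop: partial} for this argument; it is a clean consequence of restriction being transitive. The hypothesis that $\lambda$ is an arbitrary partition of $n$ (with no box restriction) is harmless here, in contrast to Proposition \ref{prop: partial}, because we are only transporting degrees of constituents along the inclusion $P_n\le Y\le\fS_n$ and not invoking the finer structure of $P_{p^n}$ as a wreath product.
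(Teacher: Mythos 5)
Your proposal is correct and is essentially the paper's argument: the paper's own proof is a one-line observation that $P_n=P_{p^{n_1}}\times\cdots\times P_{p^{n_t}}\leq Y$, from which the statement follows exactly by the transitivity-of-restriction bookkeeping you spell out. Your added remark that the identity $\cd((\chi^\lambda)_{P_n})=\bigcup\cd((\chi^{\mu_1}\times\cdots\times\chi^{\mu_t})_{P_n})$ needs nothing beyond $P_n\le Y$ (in contrast to the wreath-product analysis behind Proposition \ref{prop: partial}) is accurate.
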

\begin{proof}
	Since $P_n = P_{p^{n_1}} \times P_{p^{n_2}} \times \cdots \times P_{p^{n_t}}\leq Y$, the statement follows.
\end{proof}

\begin{lemma}\label{lem: Omega star-p-times}
Let $n\in\mathbb{N}_{\geq 2}$, let $k\in [2,\alpha_{p\6n}]$ and let $(a_1,\ldots, a_p)\in\mathcal{C}(k-1)$ be such that $a_i\in [0,\alpha_{p\6{n-1}}]$, for all $i\in [1,p]$. 
Then $$\Omega_{p^{n-1}}\6{a_1}\star\Omega_{p^{n-1}}\6{a_2}\star\cdots\star\Omega_{p^{n-1}}\6{a_p}\subseteq \Omega_{p\6n}\6{k}.$$
\end{lemma}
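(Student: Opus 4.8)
The plan is to pick, for each $i\in[1,p]$, a partition $\mu_i\in\Omega_{p^{n-1}}^{a_i}$ and a partition $\lambda\in\mathcal{P}(p^n)$ with $\mathcal{LR}(\lambda;\mu_1,\ldots,\mu_p)\neq 0$, and then show that $\lambda\in\Omega_{p^n}^k$ by exhibiting an irreducible constituent of $(\chi^\lambda)_{P_{p^n}}$ of degree exactly $p^k$. First I would record that since $\mu_i\in\Omega_{p^{n-1}}^{a_i}$, the restriction $(\chi^{\mu_i})_{P_{p^{n-1}}}$ has an irreducible constituent $\phi_i$ of degree $p^{a_i}$; hence $\phi_1\times\cdots\times\phi_p$ is an irreducible constituent of $(\chi^{\mu_1}\times\cdots\times\chi^{\mu_p})_B$, where $B=(P_{p^{n-1}})^{\times p}$ is the base group of $P_{p^n}$, and it has degree $p^{a_1+\cdots+a_p}=p^{k-1}$.

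The key point is to arrange that $\phi_1,\ldots,\phi_p$ are \emph{not all equal}, so that $(\phi_1\times\cdots\times\phi_p)^{P_{p^n}}$ is irreducible of degree $p\cdot p^{k-1}=p^k$, and is a constituent of $(\chi^\lambda)_{P_{p^n}}$ for a suitable $\lambda$; this would place $\lambda$ in $\Omega_{p^n}^k$ and prove the containment. To force the $\phi_i$ to be distinct I would argue as follows. If the $a_i$ are not all equal, then the $\phi_i$ already have different degrees for at least one pair of indices, so they cannot all coincide. If all $a_i$ equal a common value $a$ (so $k-1=pa$), then by Lemma~\ref{lem: neq irrid} (applied inside $P_{p^{n-1}}$, noting $p^a\in\cd(P_{p^{n-1}})$ since $a\leq\alpha_{p^{n-1}}$) the set $\mathrm{Irr}_a(P_{p^{n-1}})$ has size at least $p\geq 2$; using this I can replace one of the $\phi_i$, say $\phi_1$, by a different irreducible constituent of the \emph{same} restriction $(\chi^{\mu_1})_{P_{p^{n-1}}}$ of degree $p^a$ if one exists — and if $(\chi^{\mu_1})_{P_{p^{n-1}}}$ happens to have a unique constituent of degree $p^a$, then since $\mu_1\notin\{(p^{n-1}),(1^{p^{n-1}})\}$ may fail, I instead appeal to Lemma~\ref{lem: neq const}, which guarantees two distinct such constituents whenever $\mu_i\notin\{(p^{n-1}),(1^{p^{n-1}})\}$. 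A small amount of care is needed for the boundary cases $\mu_i\in\{(p^{n-1}),(1^{p^{n-1}})\}$: for these, $a_i$ must be $0$, and the corresponding $\phi_i$ is linear; but one still needs at least one index $j$ where a nonlinear or genuinely-different choice is available — this is where the hypothesis $n\geq 2$, $k\geq 2$ enters, ensuring not all $\mu_i$ can simultaneously be the trivial or sign partition.

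Once the $\phi_i$ are chosen not all equal, I take $\lambda$ to be any partition with $\mathcal{LR}(\lambda;\mu_1,\ldots,\mu_p)\neq 0$; such $\lambda$ exists because the Littlewood--Richardson product of nonempty sets is nonempty. By Gallagher's theorem combined with the description of $\mathrm{Irr}(P_{p^{n-1}}\wr C_p\,|\,\phi_1\times\cdots\times\phi_p)=\{(\phi_1\times\cdots\times\phi_p)^{P_{p^n}}\}$ recalled in the wreath-product subsection, and using $\bigl[(\chi^\lambda)_{Y},\chi^{\mu_1}\times\cdots\times\chi^{\mu_p}\bigr]=\mathcal{LR}(\lambda;\mu_1,\ldots,\mu_p)\neq 0$ together with $B\leq P_{p^n}\leq Y$, I conclude $\bigl[(\chi^\lambda)_{P_{p^n}},(\phi_1\times\cdots\times\phi_p)^{P_{p^n}}\bigr]\neq 0$, so $p^k\in\cd((\chi^\lambda)_{P_{p^n}})$ and $\lambda\in\Omega_{p^n}^k$. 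The main obstacle I anticipate is the bookkeeping in the equal-$a_i$ case: making the replacement argument uniform across whether individual $\mu_i$ lie in $\{(p^{n-1}),(1^{p^{n-1}})\}$ or not, and checking that the new partition-tuple still has a nonzero Littlewood--Richardson coefficient with some $\lambda$ (which it automatically does, since we only changed the \emph{characters} $\phi_i$ of $P_{p^{n-1}}$, not the partitions $\mu_i$).
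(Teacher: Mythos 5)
Your proposal is correct and takes essentially the same route as the paper's proof: restrict $\chi^\lambda$ to the Young subgroup $(\fS_{p^{n-1}})^{\times p}$, pick constituents $\phi_i$ of $(\chi^{\mu_i})_{P_{p^{n-1}}}$ of degree $p^{a_i}$, use the fact that $k\geq 2$ forces some $a_j\geq 1$ (hence $\mu_j\notin\{(p^{n-1}),(1^{p^{n-1}})\}$) together with Lemma~\ref{lem: neq const} to arrange that the $\phi_i$ are not all equal, and induce to $P_{p^n}$ to obtain an irreducible constituent of $(\chi^\lambda)_{P_{p^n}}$ of degree $p\cdot p^{k-1}=p^k$. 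The detour through Lemma~\ref{lem: neq irrid} is superfluous (it counts characters of $P_{p^{n-1}}$ of a given degree, not constituents of a fixed restriction), and the worry about $\mu_i\in\{(p^{n-1}),(1^{p^{n-1}})\}$ in the all-$a_i$-equal case is vacuous since there $a\geq 1$ forces every $\chi^{\mu_i}$ to be non-linear; but neither point affects the validity of the argument.
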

\begin{proof}
To ease the notation we let $q=p\6{n-1}$.
If $\lambda \in \Omega_{q}\6{a_1}\star\Omega_{q}\6{a_2}\star\cdots\star\Omega_{q}\6{a_p}$, by definition there exists an irreducible constituent $\chi^{\mu_1} \times \dots \times \chi^{\mu_p}$ of $(\chi^\lambda)_{(\fS_{q})^{\times p}}$ such that $\mu_i \in \Omega_{q}\6{a_i}$ for all $i\in [1,p]$.
Hence for every $i\in [1,p]$ there exists an irreducible constituent $\phi_i$ of $(\chi\6{\mu_i})_{P_{q}}$ such that $\phi_i(1)=p^{a_i}$.
Since $k\geq 2$, there exists $j\in [1,p]$ such that $a_j\geq 1$. Hence $\mu_j\notin\{(q), (1\6{q})\}$. Thus,
by Lemma \ref{lem: neq const} we can assume that $\phi_1,\dots ,\phi_p$ are not all equal. 
It follows that $(\phi_1 \times \cdots \times \phi_p)^{P_{p^n}}$ is an irreducible constituent of $(\chi^\lambda)_{P_{p\6n}}$ of degree equal to $p^k$. Hence $\lambda\in \Omega_{p\6n}\6{k}.$
\end{proof}

\begin{lemma}\label{lem: Omega star-R-times}
Let $n\in\mathbb{N}_{\geq 2}$ and let $n=\sum_{i=1}\6{t}p\6{n_i}$ be its $p$-adic expansion, where $n_1\geq n_2\geq\cdots \geq n_t\geq 0$.
Let $k\in [1,\alpha_{n}]$ and let $(a_1,\ldots, a_t)\in\mathcal{C}(k)$ be such that $a_i\in [0,\alpha_{p\6{n_i}}]$, for all $i\in [1,t]$.
Then $$\Omega_{p\6{n_1}}\6{a_1}\star\Omega_{p\6{n_2}}\6{a_2}\star\cdots\star\Omega_{p\6{n_t}}\6{a_t}\subseteq \Omega_{p^n}\6{k}.$$
\end{lemma}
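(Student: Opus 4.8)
The plan is to exploit the direct product decomposition $P_n = P_{p^{n_1}}\times P_{p^{n_2}}\times\cdots\times P_{p^{n_t}}$ recorded in Section~\ref{sec2.2}. Since here we work with a direct product (and not a wreath product), external tensor products of irreducible characters stay irreducible, so — in contrast with the proof of Lemma~\ref{lem: Omega star-p-times} — there will be no need to invoke Lemma~\ref{lem: neq const}. The argument is then a direct unwinding of the definitions of $\star$ and of $\Omega$.

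First I would fix $\lambda\in\Omega_{p\6{n_1}}\6{a_1}\star\cdots\star\Omega_{p\6{n_t}}\6{a_t}$. By definition of $\star$ there are partitions $\mu_i\in\Omega_{p\6{n_i}}\6{a_i}$, $i\in[1,t]$, with $\mathcal{LR}(\lambda;\mu_1,\ldots,\mu_t)\neq 0$; equivalently, $\chi\6{\mu_1}\times\cdots\times\chi\6{\mu_t}$ is an irreducible constituent of $(\chi\6\lambda)_Y$, where $Y=\fS_{p\6{n_1}}\times\cdots\times\fS_{p\6{n_t}}$. Next, for each $i\in[1,t]$, the hypothesis $\mu_i\in\Omega_{p\6{n_i}}\6{a_i}$ yields an irreducible constituent $\phi_i$ of $(\chi\6{\mu_i})_{P_{p\6{n_i}}}$ with $\phi_i(1)=p\6{a_i}$. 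Because $P_n=P_{p\6{n_1}}\times\cdots\times P_{p\6{n_t}}$, with the $i$-th factor a Sylow $p$-subgroup of the $i$-th factor of $Y$, the external product $\phi:=\phi_1\times\cdots\times\phi_t$ is an irreducible character of $P_n$ of degree $\prod_{i=1}\6t p\6{a_i}=p\6{a_1+\cdots+a_t}=p\6k$, the last equality holding since $(a_1,\ldots,a_t)\in\mathcal{C}(k)$.

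Finally I would conclude using transitivity of restriction: $(\chi\6\lambda)_{P_n}=\bigl((\chi\6\lambda)_Y\bigr)_{P_n}$, and restricting the constituent $\chi\6{\mu_1}\times\cdots\times\chi\6{\mu_t}$ of $(\chi\6\lambda)_Y$ down to $P_n$ gives $(\chi\6{\mu_1})_{P_{p\6{n_1}}}\times\cdots\times(\chi\6{\mu_t})_{P_{p\6{n_t}}}$, which contains $\phi$ as a constituent. Hence $[(\chi\6\lambda)_{P_n},\phi]\neq 0$ with $\phi\in\mathrm{Irr}_k(P_n)$, so $\lambda\in\Omega_{p\6n}\6k$. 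There is essentially no real obstacle in this proof: once the direct product structure of $P_n$ is in place, the only point requiring a modicum of care is matching the Sylow factors of $Y$ with those of $P_n$ and checking that the degrees multiply to $p\6k$.
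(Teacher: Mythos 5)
Your proof is correct and follows essentially the same route as the paper: the paper's (more terse) argument likewise unwinds the definition of $\star$, uses the direct product decomposition $P_n\cong P_{p^{n_1}}\times\cdots\times P_{p^{n_t}}$, and observes that $\phi_1\times\cdots\times\phi_t$ is an irreducible constituent of $(\chi^\lambda)_{P_n}$ of degree $p^k$. Your added remark that, unlike in Lemma \ref{lem: Omega star-p-times}, no appeal to Lemma \ref{lem: neq const} is needed here is accurate.
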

\begin{proof}
	Recall that $P_n \cong P_{p^{n_1}} \times P_{p^{n_2}} \times \cdots \times P_{p^{n_t}}$ and let $\lambda \in \Omega_{p\6{n_1}}\6{a_1}\star\Omega_{p\6{n_2}}\6{a_2}\star\cdots\star\Omega_{p\6{n_t}}\6{a_t} $. By definition, for every $i\in [1,t]$ there exists $\phi_i \in \Irr(P_{p^{n_i}})$ with $\phi_i(1)=p\6{a_i}$, such that $\phi_1 \times \cdots \times \phi_t$ is an irreducible constituent of $(\chi^\lambda)_{P_n}$ of degree $p^k$. Hence $\lambda\in \Omega_{p^n}\6{k}.$
\end{proof}

\section{The prime power case}

The aim of this section is to completely describe the sets $\Omega_{p\6n}\6k$ for all odd primes $p$, all natural numbers $n$ and all $k\in [0,\alpha_{p\6n}]$. 
We remind the reader that from \cite[Theorem 3.1]{GN}, we know that $\Omega_{p^n}^0 =\mathcal{B}_{p^n}(p^n)$, for all $n\in\mathbb{N}$. Equivalently, every irreducible character of $\fS_{p^n}$ admits a linear constituent on restriction to a Sylow $p$-subgroup. 
This result will be used frequently, with no further reference. 
We start by analysing the cases where $k\in \{1,2\}$. In the next lemma we show that for $j=1,2,$ every non-linear character of $\fS_{p^n}$ affords an irreducible constituent of degree $p^j$ on restriction to a Sylow $p$-subgroup $P_{p^n}$, as long as $P_{p^n}$ has an irreducible character of degree $p^j$.

\begin{lemma}\label{lem: 4.0}
Let $p$ be an odd prime, $n\in\mathbb{N}$ and $k\in [1,\alpha_{p\6n}]\cap\{1,2\}$. Then $\Omega_{p\6n}\6k=\mathcal{B}_{p\6n}(p\6n-1)$.
\end{lemma}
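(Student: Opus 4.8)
The statement has two inclusions. For the inclusion $\Omega_{p^n}^k\subseteq\mathcal{B}_{p^n}(p^n-1)$, I would argue that every $\lambda\in\Omega_{p^n}^k$ with $k\geq 1$ must satisfy $\lambda_1\leq p^n-1$ and $l(\lambda)\leq p^n-1$; equivalently $\lambda\notin\{(p^n),(1^{p^n})\}$. This is immediate: the restriction of the trivial (resp.\ sign) character of $\fS_{p^n}$ to $P_{p^n}$ is the trivial (resp.\ a linear) character, so its only constituent has degree $p^0=1\neq p^k$. Hence $\Omega_{p^n}^k\subseteq\mathcal{P}(p^n)\smallsetminus\{(p^n),(1^{p^n})\}=\mathcal{B}_{p^n}(p^n-1)$, and this part needs no hypothesis on $k$ beyond $k\geq 1$.

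The substantial inclusion is $\mathcal{B}_{p^n}(p^n-1)\subseteq\Omega_{p^n}^k$ for $k\in\{1,2\}$ (when $p^k\in\cd(P_{p^n})$, i.e.\ $k\leq\alpha_{p^n}$). I would proceed by induction on $n$. The base cases: for $k=1$ we need $\alpha_{p^n}\geq 1$, i.e.\ $n\geq 2$; for $k=2$ we need $\alpha_{p^n}\geq 2$, which by the formula $\alpha_{p^n}=1+p\alpha_{p^{n-1}}$ forces $n\geq 3$ (for $p=3$) or more generally $n\geq 3$. So the smallest relevant $n$ for $k=1$ is $n=2$ and for $k=2$ is $n=3$; these should be handled directly or via the general inductive step. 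For the inductive step, take $\lambda\in\mathcal{B}_{p^n}(p^n-1)$ and use Proposition~\ref{prop: partial}: choosing $\nu_1,\ldots,\nu_p\in\mathcal{P}(p^{n-1})$ with $\mathcal{LR}(\lambda;\nu_1,\ldots,\nu_p)\neq 0$, I want to arrange that $\partial_B(\chi^{\nu_1}\times\cdots\times\chi^{\nu_p})\geq k-1$, so that $\partial_{P_{p^n}}(\chi^\lambda)\geq k$. Since $\partial_B$ of a product is the sum of the $\partial_{P_{p^{n-1}}}$ of the factors, for $k=1$ it suffices to know $\partial_B\geq 0$, which is automatic ($\Omega^0=$ everything by \cite{GN}). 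For $k=2$ I need one factor $\nu_i$ with $\partial_{P_{p^{n-1}}}(\chi^{\nu_i})\geq 1$, i.e.\ $\nu_i\in\Omega_{p^{n-1}}^1$; by induction $\Omega_{p^{n-1}}^1=\mathcal{B}_{p^{n-1}}(p^{n-1}-1)$, so I need a Littlewood--Richardson decomposition of $\lambda$ into $p$ pieces, not all of the form $(p^{n-1})$ or $(1^{p^{n-1}})$, with at least one piece lying in $\mathcal{B}_{p^{n-1}}(p^{n-1}-1)$.

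The heart of the matter is therefore a combinatorial claim: \emph{every} $\lambda\in\mathcal{B}_{p^n}(p^n-1)$ admits such a decomposition. I would prove this using Lemma~\ref{lem: B star} together with the observation that $\mathcal{B}_{p^n}(p^n-1)$ is covered by products $\mathcal{B}_{p^{n-1}}(t_1)\star\cdots\star\mathcal{B}_{p^{n-1}}(t_p)$ with $\sum t_i=p^n-1$ and each $\tfrac{p^{n-1}}{2}<t_i\leq p^{n-1}$: indeed taking $p-1$ of the $t_i$ equal to $p^{n-1}$ and the last equal to $p^{n-1}-1$ gives, by Lemma~\ref{lem: B star}, $\mathcal{B}_{p^{n-1}}(p^{n-1})\star\cdots\star\mathcal{B}_{p^{n-1}}(p^{n-1})\star\mathcal{B}_{p^{n-1}}(p^{n-1}-1)=\mathcal{B}_{p^n}(p^n-1)$. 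By associativity and the meaning of $\star$, any $\lambda$ in this set has an LR-decomposition into $p$ pieces, with at least one piece in $\mathcal{B}_{p^{n-1}}(p^{n-1}-1)$ (the sign-avoiding part requires a small additional argument using $l(\lambda)\leq p^n-1$, or one simply invokes the full strength of Lemma~\ref{lem: neq const} as in the proof of Lemma~\ref{lem: Omega star-p-times}). Combining: this gives $\mathcal{B}_{p^n}(p^n-1)\subseteq\Omega_{p^{n-1}}^0\star\cdots\star\Omega_{p^{n-1}}^0\star\Omega_{p^{n-1}}^1$, which by Lemma~\ref{lem: Omega star-p-times} (with $(a_1,\ldots,a_p)=(0,\ldots,0,1)\in\mathcal{C}(k-1)$ for $k=2$, or $(0,\ldots,0)\in\mathcal{C}(0)$ for $k=1$) lands in $\Omega_{p^n}^k$. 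The main obstacle I anticipate is the careful bookkeeping at the boundary — verifying that the small cases ($n=2$ for $k=1$, $n=3$ for $k=2$, and possibly the exceptional triple $(3,9,(3,3,3))$ flagged in Lemma~\ref{lem: neq const}) are genuinely covered, and ensuring the ``not all equal to $(q)$ or $(1^q)$'' condition in the LR-decomposition can always be met, which is exactly where the constraint $\tfrac{p^{n-1}}{2}<t_i$ in Lemma~\ref{lem: B star} and the hypothesis $\lambda\in\mathcal{B}_{p^n}(p^n-1)$ (not $\mathcal{B}_{p^n}(p^n)$) are used.
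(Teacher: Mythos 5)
Your proposal is correct and follows essentially the same route as the paper: both inclusions are handled the same way, with the key step being the decomposition $\mathcal{B}_{p^{n}}(p^{n}-1)=\mathcal{B}_{p^{n-1}}(p^{n-1}-1)\star\bigl(\mathcal{B}_{p^{n-1}}(p^{n-1})\bigr)^{\star (p-1)}$ from Lemma~\ref{lem: B star}, followed by Lemma~\ref{lem: neq const} to choose non-equal constituents on the base group so that the induced character is irreducible of degree $p^k$. The only imprecision is that Lemma~\ref{lem: Omega star-p-times} is stated only for $k\geq 2$, so for $k=1$ you cannot cite it with $(0,\ldots,0)\in\mathcal{C}(0)$ and must run the direct argument you already sketch (two distinct linear constituents of the piece lying in $\mathcal{B}_{p^{n-1}}(p^{n-1}-1)$), which is exactly what the paper does.
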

\begin{proof}
Let $k=1$. Then necessarily $n\geq 2$. We first observe that clearly $\Omega_{p\6n}\61\subseteq \mathcal{B}_{p^n}(p^n-1)$.
On the other hand, if $\lambda\in\mathcal{B}_{p^n}(p^n-1)$, then there exist $\mu_1\in\mathcal{B}_{p^{n-1}}(p^{n-1}-1)$ and $\mu_2,\ldots, \mu_p\in\mathcal{P}(p\6{n-1})$ such that $\mathcal{LR}(\lambda; \mu_1,\ldots, \mu_p)\neq 0$. Using Lemma \ref{lem: neq const} we deduce that $(\chi\6{\mu_1})_{P_{p\6{n-1}}}$ admits two distinct linear constituents. Therefore, there exists $\phi_1,\ldots, \phi_p\in\mathrm{Lin}(P_{p\6{n-1}})$ not all equal and such that 
$\phi_i$ is a constituent of $(\chi\6{\mu_i})_{P_{p\6{n-1}}}$, for all $i\in [1,p]$. It follows that $(\phi_1\times\cdots\times\phi_p)\6{P_{p\6n}}$ is an irreducible constituent of $(\chi\6{\lambda})_{P_{p\6{n}}}$ of degree $p$. We conclude that $\lambda\in\Omega_{p\6n}\61$ and hence that $\Omega_{p\6n}\61=\mathcal{B}_{p\6n}(p\6n-1)$.

Let $k=2$. Then necessarily $n\geq 3$. It is clear that $\Omega_{p\6n}\62\subseteq \mathcal{B}_{p^n}(p^n-1)$. On the other hand, if $\lambda\in\mathcal{B}_{p^n}(p^n-1)$, then there exist $\mu_1\in\mathcal{B}_{p^{n-1}}(p^{n-1}-1)$ and $\mu_2,\ldots, \mu_p\in\mathcal{P}(p\6{n-1})$ such that $\mathcal{LR}(\lambda; \mu_1,\ldots, \mu_p)\neq 0$. We can now argue exactly as above to deduce that $(\chi\6\lambda)_{P_{p\6n}}$ admits an irreducible constituent $\theta$ of the form 
$\theta=(\psi\times\phi_1\times\cdots\times\phi_{p-1})\6{P_{p\6n}}$, where $\psi\in\mathrm{Irr}_1(P_{p\6{n-1}})$ and $\phi_1,\ldots, \phi_{p-1}\in\mathrm{Lin}(P_{p\6{n-1}})$. Hence $\theta(1)=p\62$, $\lambda\in\Omega_{p\6n}\62$ and therefore we have that $\Omega_{p\6n}\62=\mathcal{B}_{p\6n}(p\6n-1)$.
\end{proof}

Lemma \ref{lem: 4.0} is a special case of the following more general result.

\begin{theorem}\label{thm: boxes}
Let $n\in\mathbb{N}$ and let $k\in [0,\alpha_{p\6n}]$. Then there exists $t_n\6k\in [\frac{p\6n+1}{2}, p\6n]$ such that
$\Omega_{p^n}\6k=\mathcal{B}_{p\6n}(t_n\6k).$ Moreover, if $k\in [0,\alpha_{p\6n}-1]$, then $t_n\6{k+1}\in\{t_n\6{k}-1, t_n\6{k}\}.$
\end{theorem}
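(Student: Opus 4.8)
The plan is to induct on $n$, with the prime-power structure $P_{p^n} = (P_{p^{n-1}})^{\times p} \rtimes P_p$ driving the passage from $n-1$ to $n$. For the base cases, the statement for $k=0$ is exactly \cite[Theorem 3.1]{GN} (giving $t_n^0 = p^n$), and the cases $k\in\{1,2\}$ are Lemma \ref{lem: 4.0} (giving $t_n^1 = t_n^2 = p^n-1$ whenever these degrees occur). So I may assume $n\geq 3$ and $k\geq 2$, and that the theorem holds for $n-1$ (for every admissible degree $\leq \alpha_{p^{n-1}}$).

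First I would establish the \textbf{upper bound}, i.e. that $\Omega_{p^n}^k \subseteq \mathcal{B}_{p^n}(p^n-1)$ for $k\geq 1$: if $\lambda = (p^n)$ or $(1^{p^n})$ then $\chi^\lambda$ is linear and its restriction to $P_{p^n}$ is linear, so $\lambda\notin\Omega_{p^n}^k$; hence every $\lambda\in\Omega_{p^n}^k$ has $\lambda_1\leq p^n-1$ and $\ell(\lambda)\leq p^n-1$. Next, the core step is to show $\Omega_{p^n}^k$ is a \emph{box set} $\mathcal{B}_{p^n}(t)$ for some $t$. The natural candidate is
$$t_n^k := \max\{\,\lambda_1 \ :\ \lambda\in\Omega_{p^n}^k\,\},$$
and I must show (a) $\Omega_{p^n}^k$ is closed under taking sub-diagrams that still fit in a box, i.e. if $\lambda\in\Omega_{p^n}^k$ and $\mu\in\mathcal{P}(p^n)$ with $\mu_1\leq\lambda_1$, $\ell(\mu)\leq\ell(\lambda)$ then $\mu\in\Omega_{p^n}^k$ (this uses conjugation symmetry $\chi^{\lambda'}=\chi^\lambda\otimes\mathrm{sgn}$ together with the fact that $\mathrm{sgn}$ restricted to $P_{p^n}$ is trivial for $p$ odd, so $\Omega_{p^n}^k$ is closed under transpose, plus a monotonicity argument), and (b) that $\lambda_1\leq t_n^k$ and $\ell(\lambda)\leq t_n^k$ together actually force $\lambda\in\Omega_{p^n}^k$. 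For (b) the key tool is Proposition \ref{prop: partial}: for $\lambda\in\mathcal{B}_{p^n}(p^n-1)$,
$$\partial_{P_{p^n}}(\chi^\lambda) = 1 + \max\{\partial_B(\chi^{\nu_1}\times\cdots\times\chi^{\nu_p}) \ :\ \mathcal{LR}(\lambda;\nu_1,\ldots,\nu_p)\neq 0\},$$
which reduces the question to the base group $B = (P_{p^{n-1}})^{\times p}$ and hence, via the inductive description $\Omega_{p^{n-1}}^a = \mathcal{B}_{p^{n-1}}(t_{n-1}^a)$ and Lemma \ref{lem: B star}, to an explicit combinatorial computation with the $\star$-operation. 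Concretely, one uses Lemma \ref{lem: Omega star-p-times} to get $\bigcup \Omega_{p^{n-1}}^{a_1}\star\cdots\star\Omega_{p^{n-1}}^{a_p}\subseteq\Omega_{p^n}^k$ over compositions $(a_1,\ldots,a_p)$ of $k-1$, and Lemma \ref{lem: B star} turns each such $\star$-product $\mathcal{B}_{p^{n-1}}(t_{n-1}^{a_1})\star\cdots\star\mathcal{B}_{p^{n-1}}(t_{n-1}^{a_p})$ into the single box $\mathcal{B}_{p^n}(t_{n-1}^{a_1}+\cdots+t_{n-1}^{a_p})$. Conversely the upper bound on $\partial$ coming from Proposition \ref{prop: partial} together with Lemma \ref{lem: LR} shows nothing larger can occur, so
$$t_n^k = \max\Big\{\, t_{n-1}^{a_1}+\cdots+t_{n-1}^{a_p} \ :\ (a_1,\ldots,a_p)\in\mathcal{C}(k-1),\ a_i\in[0,\alpha_{p^{n-1}}]\,\Big\}.$$
Once $\Omega_{p^n}^k$ is identified with the box $\mathcal{B}_{p^n}(t_n^k)$ via this formula, the range $t_n^k\in[\tfrac{p^n+1}{2},\,p^n]$ follows by induction: the lower endpoint because $\mathcal{B}_{p^n}$ with $t<\tfrac{p^n+1}{2}$ is empty (no partition of $p^n$ fits), the inductive bound $t_{n-1}^{a}\geq\tfrac{p^{n-1}+1}{2}$ applied to $p$ summands, and the upper endpoint from the already-established $\Omega^k\subseteq\mathcal{B}_{p^n}(p^n)$.

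Finally, the \textbf{monotonicity claim} $t_n^{k+1}\in\{t_n^k-1,t_n^k\}$ for $k\in[0,\alpha_{p^n}-1]$ I would prove from the recursive formula for $t_n^k$ together with the inductive monotonicity $t_{n-1}^{a+1}\in\{t_{n-1}^a-1,t_{n-1}^a\}$: increasing $k$ to $k+1$ means distributing one extra unit among the $a_i$, which by the inductive step can decrease exactly one summand $t_{n-1}^{a_i}$ by at most $1$; an optimization over which coordinate to bump then shows the maximum drops by $0$ or $1$. The base cases $t_n^0 = p^n$, $t_n^1 = p^n-1$ already show the first step is a genuine decrease by $1$, and $t_n^1 = t_n^2 = p^n-1$ shows the decrease can be $0$; for general $k$ one checks the recursion preserves this dichotomy. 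The main obstacle I anticipate is step (b) — proving that the box $\mathcal{B}_{p^n}(t_n^k)$ is \emph{entirely} contained in $\Omega_{p^n}^k$, i.e. producing, for every $\lambda$ with $\lambda_1,\ell(\lambda)\leq t_n^k$, a suitable Littlewood–Richardson decomposition $\mathcal{LR}(\lambda;\nu_1,\ldots,\nu_p)\neq 0$ with each $\nu_i$ in the appropriate inductively-known box $\mathcal{B}_{p^{n-1}}(t_{n-1}^{a_i})$ — which is precisely what Lemma \ref{lem: B star} is designed to handle, but care is needed at the boundary where the hypothesis $\tfrac{n}{2}<t\leq n$ of that lemma must be verified using $t_{n-1}^a\geq\tfrac{p^{n-1}+1}{2}>\tfrac{p^{n-1}}{2}$, and one must separately handle the exceptional small cases (such as $(p,n,\lambda)=(3,9,(3,3,3))$-type configurations) where the clean statement of \cite[Lemma 4.3]{GL2} does not directly apply.
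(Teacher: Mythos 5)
Your plan follows essentially the same route as the paper: induction on $n$ with base cases from \cite[Theorem 3.1]{GN} and Lemma \ref{lem: 4.0}, the recursive formula $t_n^k=\max\{t_{n-1}^{a_1}+\cdots+t_{n-1}^{a_p}\}$ over compositions $(a_1,\ldots,a_p)$ of $k-1$ with $a_i\in[0,\alpha_{p^{n-1}}]$, the containment $\mathcal{B}_{p^n}(t_n^k)\subseteq\Omega_{p^n}^k$ via Lemmas \ref{lem: Omega star-p-times} and \ref{lem: B star}, closure under conjugation of partitions for $p$ odd, and the monotonicity $t_n^{k+1}\in\{t_n^k-1,t_n^k\}$ by perturbing one coordinate of an optimal composition. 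The part you single out as the main obstacle (the lower containment) is handled exactly as you propose by Lemma \ref{lem: B star}, whose hypothesis $\tfrac{p^{n-1}}{2}<t_{n-1}^a\leq p^{n-1}$ is guaranteed by carrying the range $t_{n-1}^a\in[\tfrac{p^{n-1}+1}{2},p^{n-1}]$ through the induction.

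The one genuine soft spot is the converse containment $\Omega_{p^n}^k\subseteq\mathcal{B}_{p^n}(t_n^k)$. Proposition \ref{prop: partial} only controls the \emph{maximal} degree $\partial_{P_{p^n}}(\chi^\lambda)$, so it cannot by itself rule out a constituent of an intermediate degree $p^k$ for a partition sticking out of the box; what is needed instead is a case analysis on the degree-$p^k$ constituent $\theta$ of $(\chi^\lambda)_{P_{p^n}}$ according to the two types of irreducible characters of $P_{p^{n-1}}\wr P_p$. If $\theta$ is induced from the base group $B=(P_{p^{n-1}})^{\times p}$, its constituents on $B$ have degree $p^{k-1}$, and your recursion over compositions of $k-1$ together with Lemma \ref{lem: LR} gives the contradiction. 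But if $\theta=\mathcal{X}(\phi;\psi)$ is an extension, then $\theta_B=\phi^{\times p}$ has degree $p^k$ (so $p\mid k$ and $\phi(1)=p^{k/p}$), a case that the composition-of-$(k-1)$ recursion does not see at all. Closing it requires invoking the inductive monotonicity $t_{n-1}^{k/p}\in\{t_{n-1}^{k/p-1}-1,\,t_{n-1}^{k/p-1}\}$ to get $p\,t_{n-1}^{k/p}\leq(p-1)\,t_{n-1}^{k/p}+t_{n-1}^{k/p-1}\leq t_n^k$, since $(\tfrac{k}{p},\ldots,\tfrac{k}{p},\tfrac{k}{p}-1)$ is a composition of $k-1$; in particular the ``moreover'' clause of the theorem is not an afterthought but an essential ingredient of the induction. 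With that case added, your argument coincides with the paper's.
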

\begin{proof}	
We proceed by induction on $n$. If $n=1$, then $\alpha_p=0$ and $\Omega_{p}^0=\mathcal{P}(p)$. 
If $n\geq 2$, we assume that the statement holds for $n-1$. 
If $k=0$ then by \cite[Theorem 3.1]{GN}, $\Omega_{p^n}^0 =\mathcal{B}_{p^n}(p^n)$, and $t_n^0=p^n$. Moreover, by Lemma \ref{lem: 4.0} we know that $t_n\61=p\6n-1=t_n^0-1$, as required. 
The case $k=1$ is completely treated by Lemma  \ref{lem: 4.0}. In fact, we know that $\Omega_{p\6n}\61=\mathcal{B}_{p^n}(p^n-1)$ and that $t_n\62=p\6n-1=t_n^1$, as required. 
We can now suppose that $k\geq 2$.
We define 
$$\mathcal{L}(k-1)=\{(j_1,\dots ,j_p)\in \mathcal{C}(k-1)\ |\ j_i \in [0,\alpha_{p^{n-1}}] \text{   for all } i\in [1,p]\}.$$
Moreover, we set 
\[M= \max \Set{t_{n-1}^{j_1}+\cdots +t_{n-1}^{j_p} | (j_1,\dots ,j_p)\in \mathcal{L}(k-1)} .\]
Notice that for any $j\in [0,\alpha_{p\6{n-1}}]$, the value $t_{n-1}^{j}$ is well-defined by induction as the integer such that $\Omega_{p\6{n-1}}\6j=\mathcal{B}_{p\6{n-1}}(t_{n-1}^{j})$.
We claim that $M=t_n\6k$. In other words, we want to prove that $\Omega_{p^n}^k =\mathcal{B}_{p^n}(M) $.
Let $(j_1, \dots, j_p) \in \mathcal{L}(k-1)$ be such that $M=t_{n-1}^{j_1}+\cdots +t_{n-1}^{j_p}$.
	By inductive hypothesis and by Lemmas \ref{lem: B star} and \ref{lem: Omega star-p-times}, we have that $$\mathcal{B}_{p^n}(M)=\mathcal{B}_{p^{n-1}}(t_{n-1}^{j_1})\star \cdots \star \mathcal{B}_{p^{n-1}}(t_{n-1}^{j_p}) = \Omega_{p\6{n-1}}^{j_1} \star \cdots \star \Omega_{p^{n-1}}^{j_p} \subseteq \Omega_{p\6{n}}^k .$$

For the opposite inclusion, suppose for a contradiction that $\lambda \in \Omega_{p^n}^k \smallsetminus \mathcal{B}_{p^n}(M)$. Since $p$ is odd, we have that $\Omega_{p^n}^k$ is closed under conjugation of partitions. Hence, we can assume that $\lambda_1 \geq M+1$. Since $\lambda\in\Omega_{p^n}^k$, there exists an irreducible constituent $\theta$ of $(\chi^\lambda)_{P_{p^n}}$ with $\theta(1)=p^k$.

\noindent $\bullet$ If $\theta= ( \phi_1 \times \cdots \times \phi_p )^{P_{p^n}}$ with $\phi_1, \ldots, \phi_p \in \Irr(P_{p^{n-1}})$ not all equal, then there exists $(j_1,\ldots, j_p)\in \mathcal{L}(k-1)$ such that 
$\phi_i(1)=p^{j_i}$ for all $ i\in [1,p]$. Then, for every $ i\in [1,p]$ there exists an irreducible constituent $\chi^{\mu_i}$ of $(\phi_i)^{\fS_{p^{n-1}}}$ such that $\left[ \chi^{\mu_1} \times \cdots \times \chi^{\mu_p} , (\chi^\lambda)_{(\fS_{p^{n-1}})^{\times p}} \right] \neq 0$.
		Hence using the inductive hypothesis, we have that $\mu_i \in \Omega_{p^{n-1}}^{j_i}=\mathcal{B}_{p^{n-1}}(t_{n-1}^{j_i})$, for all $ i\in [1,p]$. 
		Hence
		\[M \geq t_{n-1}^{j_1}+\cdots +t_{n-1}^{j_p} \geq \lambda_1 \geq M+1 ,\]
		where the first inequality holds by definition of $M$ and the second one by Lemma \ref{lem: LR}. This is a contradiction.
		
\noindent $\bullet$ On the other hand, if $\theta =\lX \left( \phi; \psi \right)$ for some $\phi \in \Irr(P_{p^{n-1}})$ and $\psi \in \Irr(P_p)$, then, $\phi(1)=p^{\frac{k}{p}}$ and there exist $\mu_1, \dots , \mu_p \in \Omega_{p^{n-1}}^{\frac{k}{p}}$ such that $\mathcal{LR}(\lambda; \mu_1, \dots , \mu_p) \neq 0$. Hence, using the inductive hypothesis we have that
$$\lambda \in \left( \Omega_{p^{n-1}}^{\frac{k}{p}} \right)^{\star p}=\left( \mathcal{B}_{p\6{n-1}}(t_{n-1}\6{\frac{k}{p}}) \right)^{\star p}.$$ Here we denoted by $A^{\star p}$ the $p$-fold $\star$-product $A \star \cdots \star A$.
		By inductive hypothesis we also know that $t_{n-1}^{\frac{k}{p}} \in \Set{t_{n-1}^{\frac{k}{p} -1}-1, t_{n-1}^{\frac{k}{p} -1} }$. Using Lemma \ref{lem: LR} we obtain that
		\[M+1 \leq \lambda_1 \leq p t_{n-1}^{\frac{k}{p}} \leq (p-1)t_{n-1}^{\frac{k}{p} } +t_{n-1}^{\frac{k}{p} -1} \leq M . \]
	This is a contradiction. Notice that the last inequality above follows from the definition of $M$, as $(\frac{k}{p},\ldots, \frac{k}{p}, \frac{k}{p}-1)\in\mathcal{L}(k-1)$.

For $k\in [2, \alpha_{p^n} -1]$, what we have proved so far is summarised here. 
\[\begin{split} \Omega_{p^n}^k =\mathcal{B}_{p^n}(T) \mbox{, with } &T= \max \Set{ t_{n-1}^{j_1}+\cdots +t_{n-1}^{j_p} | 
	(j_1,\dots , j_p) \in \mathcal{L}(k-1)}
\\ \Omega_{p^n}^{k+1}=\mathcal{B}_{p^n}(V) \mbox{, with } &V=\max \Set{t_{n-1}^{h_1}+\cdots +t_{n-1}^{h_p} | 	(h_1,\dots , h_p) \in \mathcal{L}(k)} .	\end{split}\]
Let $(j_1,\dots ,j_p)\in \mathcal{L}(k-1)$ be such that $T=t_{n-1}^{j_1}+\cdots +t_{n-1}^{j_p} $. Without loss of generality, we can assume that $j_1 < \alpha_{p^{n-1}}$. Then $(j_1 +1,j_2,\dots ,j_p) \in \mathcal{L}(k)$. By inductive hypothesis we know that $t_{n-1}^{j_1 +1} \in \Set{t_{n-1}^{j_1}-1 , t_{n-1}^{j_1}}$. Hence
\begin{equation} \label{V>T}
V\geq t_{n-1}^{j_1 +1}+t_{n-1}^{j_2}+\cdots +t_{n-1}^{j_p} \in \Set{T-1,T}. \end{equation}
On the other hand, let $(h_1, \dots ,h_p)\in \mathcal{L}(k)$ be such that $V=t_{n-1}^{h_1}+\cdots +t_{n-1}^{h_p}$. Since $k\geq 2$, without loss of generality we can assume that $h_1>0$. Then $(h_1 -1,h_2,\dots , h_p )\in \mathcal{L}(k-1)$. Thus, as above:
\begin{equation} \label{V<T} 
V=t_{n-1}^{h_1}+\cdots + t_{n-1}^{h_p} \leq T ,\end{equation}
since $t_{n-1}^{h_1} \in \Set{t_{n-1}^{h_1 -1}-1, t_{n-1}^{h_1-1}}$. Inequalities (\ref{V>T}) and (\ref{V<T}) imply that $V\in \Set{T-1, T}$.
\end{proof}

We refer the reader to the second part of Example \ref{ex: 3^n} for a description of the key steps of the proof of Theorem \ref{thm: boxes} in a small concrete instance. 
The following definitions may seem artificial but are crucial for determining the exact value of $t_n\6k$ for all $n,k\in\mathbb{N}$.

\begin{definition}\label{def: Ax}
Let $n\in\mathbb{N}_{\geq 2}$ and let $x\in [1,p\6{n-2}]$. We define the integers $m_x$ and $\ell(n,x)$ as follows:
 $$m_x=\mathrm{min}\{m\ |\ x\leq p\6{m-2}\},\ \text{and}\ \ell(n,x)=n-m_x+1.$$
Notice that $\sum_{x=1}\6{p\6{n-2}}\ell(n,x)=\alpha_{p\6n}$ (this is proved in Lemma \ref{lem: 3x} below). For $x\in [1,p\6{n-2}]$ we let 
$$A_x=\left[\sum_{j=1}\6{x-1}\ell(n,j)+1, \sum_{j=1}\6{x}\ell(n,j)\right].$$
We observe that $\{A_1, A_2,\ldots, A_{p\6{n-2}}\}$ is a partition of $[1,\alpha_{p\6{n}}]$ and that $|A_x|=\ell(n,x)$ for all $x\in [1,p\6{n-2}]$.
We refer the reader to Example \ref{ex: 3^n} for a description of these objects in a specific setting.
\end{definition}

For the convenience of the reader we give a more informal explanation of Definition \ref{def: Ax} above. For fixed $n\geq 2$, we define an increasing sequence $0=a_0<a_1<a_2<\cdots<a_{p^{n-2}}=\alpha_{p\6{n}}$ as follows. First $a_1=n-1$. Then $a_i-a_{i-1}=n-2$, for $i=2,\ldots, p$. Next $a_i-a_{i-1}=n-3$, for $i=p+1,p+2,\ldots, p^2$. Continue in this manner, we find that $a_i-a_{i-1}=1$, for $i=p^{n-3}+1,\ldots, p^{n-2}$. Now set $A_i:=(a_{i-1}, a_i]$, for $i=1,\ldots, p^{n-2}$. Then $\{A_1,A_2,\ldots, A_{p^{n-2}}\}$ is clearly a partition of $[1,p\6{n-2}]$.

\begin{lemma}\label{lem: 3x}
With the notation introduced in Definition \ref{def: Ax}, we have that $\sum_{x=1}\6{p\6{n-2}}\ell(n,x)=\alpha_{p\6n}$.
\end{lemma}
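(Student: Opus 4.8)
The plan is to evaluate the sum $\sum_{x=1}^{p^{n-2}}\ell(n,x)$ by grouping together the indices $x$ that share the same value of $m_x$, since $\ell(n,x)=n-m_x+1$ depends on $x$ only through $m_x$. First I would determine, for each integer $m$ in the relevant range, exactly how many $x\in[1,p^{n-2}]$ satisfy $m_x=m$. By definition $m_x=\min\{m : x\le p^{m-2}\}$, so $m_x=m$ precisely when $p^{m-3}<x\le p^{m-2}$ (with the convention that the lower bound is $0$ when $m=2$, giving $1\le x\le 1$, i.e. only $x=1$ has $m_x=2$). Hence the number of $x$ with $m_x=m$ is $p^{m-2}-p^{m-3}$ for $m\in[3,n]$, and it is $1$ for $m=2$. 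Note $m_x$ ranges over $[2,n]$ since $x\le p^{n-2}$ forces $m_x\le n$.

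Next I would substitute this count into the sum:
\[
\sum_{x=1}^{p^{n-2}}\ell(n,x)=\sum_{x=1}^{p^{n-2}}(n-m_x+1)=(n-2+1)\cdot 1+\sum_{m=3}^{n}(n-m+1)\bigl(p^{m-2}-p^{m-3}\bigr).
\]
Reindexing the right-hand sum with $j=m-2$ (so $j$ runs from $1$ to $n-2$) turns it into $(n-1)+\sum_{j=1}^{n-2}(n-1-j)\bigl(p^{j}-p^{j-1}\bigr)$. This is a standard Abel-summation / telescoping computation: writing $c_j=p^j-p^{j-1}$, the partial sums $\sum_{j=1}^{i}c_j=p^i-1$ telescope, so
\[
\sum_{j=1}^{n-2}(n-1-j)\,c_j=\sum_{i=1}^{n-2}\Bigl(\sum_{j=1}^{i}c_j\Bigr)=\sum_{i=1}^{n-2}\bigl(p^i-1\bigr)=\frac{p^{n-1}-p}{p-1}-(n-2).
\]
Adding back the $(n-1)$ term gives the total $1+\dfrac{p^{n-1}-p}{p-1}=\dfrac{p^{n-1}-1}{p-1}$.

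Finally I would check this against the definition of $\alpha_{p^n}$. For $n\ge 2$ we have $\alpha_{p^n}=(p^{n-1}-1)/(p-1)$ by definition, which matches exactly; the edge case $n=2$ should be verified separately but trivially, since then the sum has a single term $\ell(2,1)=2-m_1+1=2-2+1=1=\alpha_{p^2}$. I do not anticipate any serious obstacle here — the only point requiring a little care is getting the boundary value $m_x=2$ (and hence the isolated $x=1$ term) correct and making sure the reindexing does not drop or double-count a term; the rest is a routine geometric-series manipulation.
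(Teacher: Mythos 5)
Your proof is correct and follows essentially the same route as the paper: both arguments partition $[1,p^{n-2}]$ into the blocks on which $m_x$ (hence $\ell(n,x)$) is constant — your blocks $(p^{m-3},p^{m-2}]$ are exactly the paper's intervals $[p^i+1,p^{i+1}]$ with $i=m-3$, plus the singleton $x=1$ — and then evaluate the resulting weighted geometric sum by an Abel-summation/telescoping rearrangement. The bookkeeping (the isolated $x=1$ term, the reindexing, and the identification with $\alpha_{p^n}=(p^{n-1}-1)/(p-1)$) all checks out.
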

\begin{proof}
If $n=2$, then $\ell(2,1)=1=\alpha_{p\62}$. 
	Let $n\geq 3$ and $i\in [0,n-3]$, then for every $x\in [p^i +1 , p^{i+1}]$, $m_x =i+3$ and $\ell(n,x)=n-i-2$. Hence
	\[\begin{split}
	\sum_{x=1}\6{p\6{n-2}}\ell(n,x) &= \ell(n,1) + \sum_{i=0}^{n-3} \sum_{x= p^i +1}^{p^{i+1}} \ell(n,x) = (n-1) + \sum_{i=0}^{n-3} p^i(p-1) (n-i-2) \\
	&= (n-1)-(n-2) + \left( \sum_{i=1}^{n-3} p^i [(n-i-1)-(n-i-2)] \right) + p^{n-2}[n-(n-3)-2] \\
	&=1+ \left(\sum_{i=1}^{n-3} p^i \right) +p^{n-2} =\alpha_{p^n} .
	\end{split}\]
\end{proof}

The following technical lemma will be useful to prove Theorem \ref{thm: p-power}.

\begin{lemma}\label{lem: ell(n,j)}
Let $n\in\mathbb{N}\geq 2$ and $p$ be an odd prime. 
If $x=pa+r$, for some $r\in [0,p-1]$ and $a\in\mathbb{N}$, then 
$$p\cdot \sum_{j=1}\6a\ell(n-1, j)+ r\cdot\ell(n-1,a+1)
= \sum_{j=1}\6{x}\ell(n,j)-1.$$
\end{lemma}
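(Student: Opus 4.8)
The plan is to prove the identity by computing both sides in closed form and comparing. First I would unfold the definition of $\ell(n,x)$: using the informal description following Definition \ref{def: Ax}, the function $j\mapsto\ell(n,j)$ is a decreasing step function, equal to $n-1$ at $j=1$, equal to $n-2$ for $j\in[2,p]$, equal to $n-3$ for $j\in[p+1,p^2]$, and in general $\ell(n,j)=n-i-2$ for $j\in[p^i+1,p^{i+1}]$ with $i\in[0,n-3]$. The same pattern holds for $\ell(n-1,j)$ with $n$ replaced by $n-1$, valid for $j\in[1,p^{n-3}]$. Hence there is a direct relation: for $j$ in the relevant range, $\ell(n,j)=\ell(n-1,j)+1$ whenever $j\geq 2$ lies in the common domain, and more usefully, $\ell(n, pa+r)$ and $\ell(n-1,a+1)$ differ in a controlled way because multiplying the index by (roughly) $p$ shifts the relevant block index by one.

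The key computational step is the following. By definition, $\sum_{j=1}^{x}\ell(n,j)$ is the right endpoint of the interval $A_x$, i.e. it equals $a_x$ in the notation of the informal description; similarly $\sum_{j=1}^{a}\ell(n-1,j)$ is the right endpoint $a'_a$ of the $a$-th block for parameter $n-1$. So the claim is equivalent to $p\cdot a'_a + r\cdot\ell(n-1,a+1) = a_x - 1$ where $x=pa+r$. I would prove this by a short induction on $a$ (for fixed $n$), or alternatively by a direct closed-form evaluation: using Lemma \ref{lem: 3x} and its proof technique, one gets $\sum_{j=1}^{p^{i+1}}\ell(n,j) = \alpha_{p^{i+2}} = 1 + p + \cdots + p^{i} + p^{i+1}\cdot(\text{small correction})$, and one evaluates partial sums $\sum_{j=1}^{x}\ell(n,j)$ for general $x$ by writing $x$ via its position relative to the powers $p^i$. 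Then one checks that feeding $x = pa+r$ into this formula and comparing with $p$ times the analogous formula for parameter $n-1$ evaluated at $a$, plus the correction term $r\cdot\ell(n-1,a+1)$, yields exactly the claimed off-by-one identity. The base case $a=0$ (so $x=r\in[0,p-1]$) reads $r\cdot\ell(n-1,1) = r(n-2) = \sum_{j=1}^{r}\ell(n,j)-1 = (n-1) + (r-1)(n-2) - 1 = r(n-2)$, which checks out; and the inductive step increments $a$ by $1$, which increases the left side by $p\cdot\ell(n-1,a+1)$ plus the adjustment in the $r\cdot\ell(n-1,\cdot)$ term, while the right side increases by $\sum_{j=px+1}^{p(x+1)}\ell(n,j)$, and one verifies these match using $\ell(n,pa+1)=\cdots=\ell(n,pa+p)$ when $pa+1,\ldots,pa+p$ all lie in one block, with the boundary cases (when $a$ is itself a power of $p$) handled separately since there the block for parameter $n-1$ changes exactly when the block for parameter $n$ changes.

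The main obstacle I anticipate is bookkeeping at the block boundaries: the relation $\ell(n,pa+r) \leftrightarrow \ell(n-1,a+1)$ is clean in the interior of a block but requires care precisely when $a$ or $x$ sits at an endpoint $p^i$, since then $pa+r$ may straddle two blocks of the parameter-$n$ step function while $a+1$ straddles two blocks of the parameter-$(n-1)$ function, and one must confirm that the "$+1$" shift in block index is consistent on both sides. I would organize the proof so that this is isolated into a single case distinction (whether $a\in[p^{i-1}+1,p^i]$ and whether $r$ is small or pushes $pa+r$ past $p^{i+1}$), rather than scattered through the calculation. Once the endpoint cases are pinned down, the rest is the routine geometric-series arithmetic already rehearsed in the proof of Lemma \ref{lem: 3x}.
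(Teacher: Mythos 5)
Your plan is correct and is essentially the paper's argument: the published proof simply telescopes $\sum_{j=1}^{x}\ell(n,j)$ over the $p$-blocks $[jp+1,jp+p]$ using the identity $\ell(n,y)=\ell(n-1,j+1)$ on each such block, together with $\sum_{y=1}^{p}\ell(n,y)=p\,\ell(n-1,1)+1$, which is exactly the computation your induction on $a$ unwinds. The boundary cases you worry about never actually arise, since the breakpoints of $j\mapsto\ell(n,j)$ occur at powers of $p$, which are multiples of $p$, so each interval $[jp+1,jp+p]$ with $j\geq 1$ lies entirely inside a single block.
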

\begin{proof}
	Notice that $\ell(n,1)=n-1$ and if $y\in [2,p]$, $\ell(n,y)=n-2$. Thus $$\sum_{y=1}^p \ell(n,y) =p \ell(n-1,1) +1.$$
	
	Moreover, for $j\in \mathbb{N}$ we have that $$\sum_{y=jp+1}^{jp+p} \ell(n,y) = p \ell(n-1,j+1).$$ This follows by observing that $\ell(n,y)=\ell(n-1,j+1)$, for all $y\in[jp+1, jp+p]$.
	
	Using these facts, we deduce that
	\[ \begin{split} \sum_{j=1}\6{x}\ell(n,j) &= \sum_{j=1}^p \ell(n,j) + \sum_{j=1}^{a-1} \sum_{y=jp+1}^{jp+p} \ell(n,y) +\sum_{i=1}^r \ell(n,ap+i) \\
	&= 1+p \ell(n-1,1) +p\sum_{j=1}^{a-1} \ell(n-1,j+1) + r \ell(n-1,a+1)   .\end{split} \]
\end{proof}

The main result of this section shows that if $p^k$ is a character degree of $P_{p^n}$, then the partitions of $p^n$ whose corresponding irreducible character admit a constituent of degree $p^k$ on restriction to $P_{p^n}$ are precisely those which fit inside a square of length $p^n-x$, where $k\in A_x$ determines $x$. 

\begin{theorem}\label{thm: p-power}
Let $n\geq 2$, $k\in [1,\alpha_{p^n}]$ and let $x\in [1,p\6{n-2}]$ be such that $k\in A_x$. Then $\Omega_{p^n}\6k=\mathcal{B}_{p\6n}(p\6n-x).$
\end{theorem}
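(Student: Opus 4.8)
The plan is to induct on $n$, mirroring the structure of the proof of Theorem \ref{thm: boxes} but now keeping track of the exact value of $t_n^k$. By Theorem \ref{thm: boxes} we already know $\Omega_{p^n}^k=\mathcal{B}_{p^n}(t_n^k)$ for a well-defined integer $t_n^k$, so the entire task reduces to proving the numerical identity $t_n^k=p^n-x$ whenever $k\in A_x$. The base case $n=2$ is immediate: here $\alpha_{p^2}=1$, the only admissible $k$ is $k=1$, which lies in $A_1$ (since $\ell(2,1)=1$), and Lemma \ref{lem: 4.0} gives $\Omega_{p^2}^1=\mathcal{B}_{p^2}(p^2-1)$, matching $p^n-x$ with $x=1$.

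For the inductive step, assume the formula holds for $n-1$, i.e. $t_{n-1}^j=p^{n-1}-y$ whenever $j$ lies in the block $A_y$ associated to $n-1$ (with the partition $\{A_1,\ldots,A_{p^{n-3}}\}$ of $[1,\alpha_{p^{n-1}}]$). Recall from the proof of Theorem \ref{thm: boxes} that, for $k\ge 2$,
\[ t_n^k = \max\Set{ t_{n-1}^{j_1}+\cdots+t_{n-1}^{j_p} \mid (j_1,\dots,j_p)\in\mathcal{L}(k-1) }, \]
where $\mathcal{L}(k-1)$ consists of the compositions of $k-1$ into $p$ parts each in $[0,\alpha_{p^{n-1}}]$; also $t_n^0=p^n$ and $t_n^1=p^n-1$ (Lemma \ref{lem: 4.0}), consistent with $A_1$ starting at $1$ and $\ell(n,1)=n-1\ge 1$. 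Since $t_{n-1}^0=p^{n-1}$ and $t_{n-1}^j=p^{n-1}-y$ for $j\in A_y$ (which in particular is a non-increasing function of $j$), maximizing $t_{n-1}^{j_1}+\cdots+t_{n-1}^{j_p}$ over $\mathcal{L}(k-1)$ amounts to the following optimization: distribute a budget of $k-1$ among $p$ slots, where putting $j$ into a slot costs $j$ and yields value $p^{n-1}-y(j)$ (with $y(0)=0$); we want to minimize the total "deficit" $\sum_i y(j_i)$ subject to $\sum_i j_i=k-1$ and each $j_i\le\alpha_{p^{n-1}}$. Because the deficit function $y(\cdot)$ is concave in $j$ — increasing $j$ by one unit within block $A_y$ costs one unit of budget but increases the deficit by $0$ until the block is exhausted, then by $1$ when crossing to $A_{y+1}$ — the optimal strategy is to load budget greedily into as few slots as possible, filling up complete blocks. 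This is precisely the combinatorial content encoded by Lemma \ref{lem: ell(n,j)} and Lemma \ref{lem: 3x}: the identity $p\sum_{j=1}^a\ell(n-1,j)+r\,\ell(n-1,a+1)=\sum_{j=1}^x\ell(n,j)-1$ (with $x=pa+r$) says exactly that using $r$ slots filled to the top of block $A_{a+1}$ of the $(n-1)$-picture and the rest filled to the top of block $A_a$ accounts for a budget of $\bigl(\sum_{j=1}^x\ell(n,j)\bigr)-1$ in the $n$-picture, and the resulting total value is $p\cdot p^{n-1}-\bigl((p-r)\cdot a + r\cdot(a+1)\bigr)\cdot(\text{something})$; unwinding this gives $t_n^k=p^n-x$ with $k\in A_x$.

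Concretely, I would argue: given $k\in A_x$ for the $n$-picture, write $x=pa+r$ with $r\in[0,p-1]$, and check that $k-1$ lies in the range $\bigl[\,p\sum_{j=1}^{a}\ell(n-1,j)+(r-1)\ell(n-1,a+1)+1,\ p\sum_{j=1}^{a}\ell(n-1,j)+r\,\ell(n-1,a+1)\,\bigr]$ (using Lemma \ref{lem: 3x} to see the blocks tile $[1,\alpha_{p^n}]$ correctly, together with Lemma \ref{lem: ell(n,j)}). Then the optimal composition has $r$ entries equal to a value sitting in block $A_{a+1}$ of the $(n-1)$-picture (so $t_{n-1}^{j_i}=p^{n-1}-(a+1)$) and $p-r$ entries sitting in block $A_a$ or lower (so $t_{n-1}^{j_i}=p^{n-1}-a$), giving
\[ t_n^k = r\bigl(p^{n-1}-(a+1)\bigr)+(p-r)\bigl(p^{n-1}-a\bigr) = p^n-(pa+r)=p^n-x. \]
One must separately verify the boundary cases — $k\le p^n-1$ small (handled by Lemma \ref{lem: 4.0}, corresponding to $a=0$), and $k=\alpha_{p^n}$ (corresponding to $x=p^{n-2}$, $a=p^{n-3}$, $r=0$, forcing all slots into the top block and giving $t_n^k=p^n-p^{n-2}$, the expected minimum since $\mathcal{B}_{p^n}(p^n-p^{n-2})$ should house the characters realizing the maximal degree $p^{\alpha_{p^n}}$) — and also check that no composition in $\mathcal{L}(k-1)$ can beat this value, which is where concavity of $y(\cdot)$ enters: any rearrangement spreading the budget more evenly can only increase the deficit.

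\textbf{Main obstacle.} The genuinely delicate part is the optimization/concavity argument: proving rigorously that the greedy "fill complete blocks one slot at a time" composition is optimal among all of $\mathcal{L}(k-1)$, and translating this cleanly through the index bookkeeping of Lemmas \ref{lem: 3x} and \ref{lem: ell(n,j)} so that the arithmetic $t_n^k=p^n-x$ falls out exactly, with all the block boundaries lining up. The algebraic manipulations are routine once the right exchange argument is set up, but getting the definitions of $m_x$, $\ell(n,x)$, and $A_x$ to interact correctly with the recursion — in particular verifying that $k\in A_x$ in the $n$-picture corresponds to the budget $k-1$ landing in the right sub-range of the $(n-1)$-picture — requires care.
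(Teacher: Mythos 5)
Your strategy is sound and, at its core, rests on the same two pillars as the paper's proof: the arithmetic identity of Lemma \ref{lem: ell(n,j)} and the monotonicity $\ell(n-1,s)\geq\ell(n-1,s+1)$ (your ``concavity of the deficit''). But the packaging is genuinely different. You evaluate directly the recursion $t_n^k=\max\{t_{n-1}^{j_1}+\cdots+t_{n-1}^{j_p}\}$ established inside the proof of Theorem \ref{thm: boxes}, treating the whole theorem as one optimization problem over $\mathcal{L}(k-1)$. The paper never returns to that recursion: it uses only the \emph{consequence} of Theorem \ref{thm: boxes} (that $t_n^k$ decreases by at most one at each step) to reduce to the two endpoints $f_n(x-1)+1$ and $f_n(x)$ of each block $A_x$, and then proves the two inclusions by different means --- the lower bound by an explicit $\star$-product construction (Lemmas \ref{lem: Omega star-p-times} and \ref{lem: B star}), and the upper bound by showing that the single partition $(p^n-(x-1),x-1)$ has $\partial_{P_{p^n}}(\chi^\lambda)\leq f_n(x-1)$ via Proposition \ref{prop: partial}. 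The exchange argument you flag as the main obstacle is exactly the inequality appearing in the paper's upper bound, and it is dispatched in one line from $\ell(n-1,s)\geq\ell(n-1,s+1)$; so it is not a real obstruction, and your approach buys a somewhat more uniform argument at the cost of having to verify the recursion's optimum for \emph{every} $k$ rather than only at block endpoints. Two small cautions: (i) your stated range for $k-1$ is off in the case $r=0$ (there the lower endpoint involves $\ell(n-1,a)$, not $\ell(n-1,a+1)$, since $\ell(n,pa)=\ell(n-1,a)$), precisely the bookkeeping you warn about; (ii) remember that the recursion only holds for $k\geq 2$ (for $k=1$ the ``all slots empty'' composition is inadmissible because the $\phi_i$ must not all be equal), which you do note via Lemma \ref{lem: 4.0}. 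With those points tidied up, your argument goes through.
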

\begin{proof}
We proceed by induction on $n$: if $n=2$ then $\alpha_{p^2}=1$ and necessarily $k=1$ as $A_1=\{1\}$. By Lemma \ref{lem: 4.0}, we have that $\Omega_{p^2}\61=\mathcal{B}_{p\62}(p\62-1)$, as required. 
If $n\geq 3$, we proceed by induction on the parameter $x\in [1,p\6{n-2}]$. For $x=1$, we want to show that for every $k\in A_1=[1,\ell(n,1)]$ we have that $\Omega_{p^n}\6k=\mathcal{B}_{p\6n}(p\6n-1).$ Using Theorem \ref{thm: boxes} and Lemma \ref{lem: 4.0}, we know that 
$$\Omega_{p^n}\6{\ell(n,1)}\subseteq \Omega_{p^n}\6k\subseteq \Omega_{p^n}\61=\mathcal{B}_{p\6n}(p\6n-1).$$
Hence,  it is enough to show that $\Omega_{p^n}\6{\ell(n,1)}=\mathcal{B}_{p\6n}(p\6n-1)$. 
Since $\ell(n,1)=\ell(n-1,1)+1$, we use Lemma \ref{lem: Omega star-p-times}, the inductive hypothesis on $n$ and \cite[Theorem 3.1]{GN}, to deduce that 
$$\Omega_{p^n}\6{\ell(n,1)}\supseteq \Omega_{p^n}\6{\ell(n-1,1)}\star \big(\Omega_{p^n}\6{0}\big)\6{\star p-1}=\mathcal{B}_{p^n}(p^n -1) \star \big( \mathcal{B}_{p^n}(p^n) \big)^{\star p-1}.$$
Using Lemma \ref{lem: B star} we conclude that 
$\mathcal{B}_{p\6{n}}(p\6n-1)\subseteq \Omega_{p^n}\6{\ell(n,1)}$ and therefore that $\mathcal{B}_{p\6{n}}(p\6n-1)= \Omega_{p^n}\6{\ell(n,1)}$. 

Let us now suppose that $x\geq 2$ and that $k\in A_x$. To ease the notation, for any $y\in [1,p\6{n-2}]$ we let $f_n(y)=\sum_{j=1}\6{y}\ell(n,j)$. With this notation we have that $A_x=[f_n(x-1)+1, f_n(x)]$. Using Theorem \ref{thm: boxes} and arguing exactly as above, we observe that in order to show that $\Omega_{p^n}\6k=\mathcal{B}_{p\6n}(p\6n-x)$, it is enough to prove that $$(1)\ \Omega_{p^n}\6{f_n(x-1)+1}=\mathcal{B}_{p\6n}(p\6n-x)\ \text{and that}\ (2)\ \Omega_{p^n}\6{f_n(x)}=\mathcal{B}_{p\6n}(p\6n-x).$$

To prove $(1)$, we start by observing that by inductive hypothesis we know that the statement holds for any $j\in A_{x-1}$. In particular we have that $\Omega_{p^n}\6{f_n(x-1)}=\mathcal{B}_{p\6n}(p\6n-(x-1))$. By Theorem \ref{thm: boxes} it follows that $\Omega_{p^n}\6{f_n(x-1)+1}=\mathcal{B}_{p\6n}(T)$, for some $T\in\{p\6n-x, p\6n-(x-1)\}$. It is therefore enough to show that $\lambda=(p\6n-(x-1), x-1)\notin\Omega_{p^n}\6{f_n(x-1)+1}$. 
Let $\mu_1,\ldots, \mu_p\in\mathcal{P}(p\6{n-1})$ be such that $\mathcal{LR}(\lambda; \mu_1,\dots, \mu_p)\neq 0$. By Lemma \ref{lem: LR} for every $i\in [1,p]$, there exists $a_i\in\mathbb{N}$ such that $(\mu_i)_1=p\6{n-1}-a_i$ and such that $\sum_{j=1}\6pa_j\leq x-1$. In particular,  for every $i\in [1,p]$ we have that 
$$\mu_i\in \mathcal{B}_{p\6{n-1}}(p\6{n-1}-a_i)\smallsetminus \mathcal{B}_{p\6{n-1}}(p\6{n-1}-(a_i+1))=
\Omega_{p^{n-1}}\6{f_{n-1}(a_i)}\smallsetminus\Omega_{p^{n-1}}\6{f_{n-1}(a_i)+1},$$
where the equality is guaranteed by the inductive hypothesis on $n$. 

Let $B=\left( P_{p^{n-1}} \right)^{\times p}$ be the base group of $P_{p^n}$ and 
let $Y=(\fS_{p\6{n-1}})\6{\times p}\leq \fS_{p\6n}$ be such that $B\leq Y$.  Let $\eta=\chi\6{\mu_1}\times\cdots\times\chi\6{\mu_p}\in\mathrm{Irr}(Y)$ and let $x-1=ap+r$, for some $a\in\mathbb{N}$ and $r\in [0,p-1]$. 
We observe that 
\begin{eqnarray*}
\partial_B(\eta) &=& \sum_{j=1}\6pf_{n-1}(a_j)\ =\ \sum_{j=1}\6p\sum_{i=1}\6{a_j}\ell(n-1, i)\\
&\leq & p\cdot \big(\sum_{j=1}\6a\ell(n-1, j)\big)+ r\cdot\ell(n-1,a+1)\\
&=& \sum_{j=1}\6{x-1}\ell(n,j)-1\ =\ f_n(x-1)-1.
\end{eqnarray*}
Here, the inequality follows immediately by observing that $\ell(n-1,s)\geq \ell(n-1,s+1)$ for all $s\in\mathbb{N}$. On the other hand, the third equality holds by Lemma \ref{lem: ell(n,j)}.
Using Proposition \ref{prop: partial}, we deduce that $\partial_{P_{p\6n}}(\chi\6\lambda)\leq f_n(x-1)$. 
It follows that $\lambda\notin\Omega_{p^n}\6{f_n(x-1)+1}$, as desired. 

To prove $(2)$, we recall that by $(1)$ above we have that $\Omega_{p^n}\6{f_n(x-1)+1}=\mathcal{B}_{p\6{n}}(p\6{n}-x)$. Hence, Theorem \ref{thm: boxes} implies that $\Omega_{p^n}\6{f_n(x)}\subseteq\mathcal{B}_{p\6{n}}(p\6{n}-x)$. On the other hand, writing $x=ap+r$ for some $a\in\mathbb{N}$ and $r\in [0,p-1]$, and
using Lemma \ref{lem: ell(n,j)}, we have that: 
 \begin{eqnarray*}
\Omega_{p^n}\6{f_n(x)} &=& \Omega_{p^n}\6{1+p\cdot \big(\sum_{j=1}\6a\ell(n-1, j)\big)+ r\cdot\ell(n-1,a+1)}\\
&\supseteq & \big(\Omega_{p^{n-1}}\6{f_{n-1}(a+1)}\big)\6{\star r}\star\big(\Omega_{p^{n-1}}\6{f_{n-1}(a)}\big)\6{\star p-r}\\
&=& \big(\mathcal{B}_{p\6{n-1}}(p\6{n-1}-(a+1))\big)\6{\star r}\star\big(\mathcal{B}_{p\6{n-1}}(p\6{n-1}-a)\big)\6{\star p-r}\\
&=& \mathcal{B}_{p\6{n}}(p\6{n}-x).
\end{eqnarray*}
Here the first inclusion follows from Lemma \ref{lem: Omega star-p-times}. The second equality holds by inductive hypothesis. Finally, the last equality is given by Lemma \ref{lem: B star}.
The proof is complete. 
\end{proof}

In the following corollary we collect a number of facts useful to have a better understanding of the structure of the sets $\Omega_{p\6n}\6k$ for all $n\in\mathbb{N}$ and all $k\in [0,\alpha_{p\6n}]$. 

\begin{corollary} \label{cor: p^n}
Let  $n\in\mathbb{N}$ and let $1\leq k<t\leq\alpha_{p\6n}$. The following hold. 
\begin{itemize}
\item[(i)] $\mathcal{B}_{p^n}(p^n - p^{n-2}) = \Omega_{p^n}^{\alpha_{p^n}} \subseteq   \Omega_{p\6n}\6t\subseteq \Omega_{p\6n}\6k $.
\item[(ii)] $\Omega_{p\6n}\6k=\Omega_{p\6n}\6t$ if, and only if, there exists $x\in [1,p\6{n-2}]$ such that $k,t\in A_x$. 
\item[(iii)] Given $x\in [1,p\6{n-2}]$ we have that $|\{k\in [1,\alpha_{p\6n}]\ |\ \Omega_{p\6n}\6k=\mathcal{B}(p\6n-x)\}|=\ell(n,x)$. 
\end{itemize}
\end{corollary}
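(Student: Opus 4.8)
The corollary is really a packaging of Theorem \ref{thm: p-power} together with the elementary combinatorial properties of the sets $A_x$ recorded in Definition \ref{def: Ax} and Lemma \ref{lem: 3x}, so the strategy is to extract each statement from those results with minimal extra work. The only genuinely new input needed is the identification of $\Omega_{p^n}^{\alpha_{p^n}}$ and the monotonicity $\Omega_{p^n}^t\subseteq\Omega_{p^n}^k$ for $k<t$.

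First I would prove (i). The inclusion $\Omega_{p^n}^t\subseteq\Omega_{p^n}^k$ for $1\le k<t$ is immediate from Theorem \ref{thm: boxes}: that theorem shows $t_n^{k}\ge t_n^{k+1}$ for each $k$ (since $t_n^{k+1}\in\{t_n^k-1,t_n^k\}$), hence $t_n^k\ge t_n^t$ and therefore $\mathcal{B}_{p^n}(t_n^t)\subseteq\mathcal{B}_{p^n}(t_n^k)$, i.e. $\Omega_{p^n}^t\subseteq\Omega_{p^n}^k$. For the leftmost equality, note $\alpha_{p^n}$ is the top of the block $A_{p^{n-2}}$ (by Lemma \ref{lem: 3x} the blocks $A_1,\ldots,A_{p^{n-2}}$ partition $[1,\alpha_{p^n}]$ and $\alpha_{p^n}=\sum_x\ell(n,x)=f_n(p^{n-2})$), so $\alpha_{p^n}\in A_{p^{n-2}}$, and Theorem \ref{thm: p-power} with $x=p^{n-2}$ gives $\Omega_{p^n}^{\alpha_{p^n}}=\mathcal{B}_{p^n}(p^n-p^{n-2})$. (The small cases $n=1$, where $\alpha_p=0$ and the chain is empty, and $n=2$, where $p^{n-2}=1$ and $\alpha_{p^2}=1$, are checked directly.)

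Next, (ii). Since $\{A_1,\ldots,A_{p^{n-2}}\}$ partitions $[1,\alpha_{p^n}]$, every $k\in[1,\alpha_{p^n}]$ lies in a unique $A_{x(k)}$, and Theorem \ref{thm: p-power} says $\Omega_{p^n}^k=\mathcal{B}_{p^n}(p^n-x(k))$. If $k,t$ lie in the same block then $x(k)=x(t)$ and the two $\Omega$-sets coincide. Conversely, if they lie in different blocks, say $x(k)<x(t)$, then $p^n-x(k)>p^n-x(t)$; as these integers lie in $[\tfrac{p^n+1}{2},p^n]$ (again by Theorem \ref{thm: boxes}, or directly), the partition $(p^n-x(t)+1,\,x(t)-1)$ of $p^n$ lies in $\mathcal{B}_{p^n}(p^n-x(k))\setminus\mathcal{B}_{p^n}(p^n-x(t))$ — its first row exceeds $p^n-x(t)$ while staying at most $p^n-x(k)$, and its length $2\le\tfrac{p^n+1}{2}$ — so the two sets differ. (When $x(t)=1$ the appropriate witness is $(p^n)$ itself, which is not in $\mathcal{B}_{p^n}(p^n-1)$.) This establishes (ii). Finally, (iii) is immediate: by (ii) the set $\{k\in[1,\alpha_{p^n}]\mid\Omega_{p^n}^k=\mathcal{B}_{p^n}(p^n-x)\}$ is exactly $A_x$, and $|A_x|=\ell(n,x)$ by construction in Definition \ref{def: Ax}.

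The only place requiring any care — and thus the main (minor) obstacle — is justifying strict inequality in (ii): one must exhibit, for $x(k)<x(t)$, an honest partition of $p^n$ that separates $\mathcal{B}_{p^n}(p^n-x(k))$ from $\mathcal{B}_{p^n}(p^n-x(t))$, which forces the bookkeeping check that $p^n-x(t)+1\le p^n-x(k)$ and that the column length stays within the box. Everything else is a direct appeal to Theorems \ref{thm: boxes} and \ref{thm: p-power} and to the combinatorics of the $A_x$ already set up in Lemma \ref{lem: 3x}.
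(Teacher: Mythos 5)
Your proposal is correct and takes essentially the same route as the paper, which simply observes that (i)--(iii) follow from Theorem \ref{thm: p-power} together with the facts that $\{A_1,\dots,A_{p^{n-2}}\}$ partitions $[1,\alpha_{p^n}]$ and $|A_x|=\ell(n,x)$; you merely spell out the details (the monotonicity via Theorem \ref{thm: boxes} and the explicit witness partitions showing the boxes $\mathcal{B}_{p^n}(p^n-x)$ are pairwise distinct) that the paper leaves implicit.
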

\begin{proof}
	Recalling that $|A_x|=\ell(n,x) $ for every $x\in [1,p\6{n-2}]$, (i), (ii) and (iii) follow immediately by Theorem \ref{thm: p-power}.
\end{proof}

We find particularly surprising that a partition of $p^n$ whose character admits an irreducible constituent of degree $p^k$ on restriction to $P_{p^n}$ also admits a constituent of degree $p^j$, for any $j\in\{0,1,\ldots, k-1\}$. Moreover, the partitions whose character admit a constituent of maximal possible degree $p^{\alpha_{p^n}}$ are precisely those which fit inside a square of side $p^n-p^{n-2}$.

\begin{example} \label{ex: 3^n}
	Let $p=3$ and fix $n=4$. Following the notation introduced in Definition \ref{def: Ax}, we have $3\6{4-2}=9$ and 
	$ \ell(4,1)= 3,\  \ell(4,2)=\ell(4,3)=2,\  \ell(4,4)=\cdots =\ell(4,9)=1 $.
	Hence
	\[ A_1=\Set{1,2,3},\ A_2=\Set{4,5},\ A_3=\Set{6,7},\ A_4=\Set{8},\ A_5=\Set{9}, \dots , A_9=\Set{13} . \]
	Observe that $\Set{A_1, \dots , A_9}$ is a partition of $[1, \alpha_{3^4}]=[1,13]$, as required.
	Using Theorem \ref{thm: p-power}, we have a complete description of $\Omega_{3\64}\6k$, for all $k\in [1,13]$. In particular, we have
	\[\Omega_{3^4}^1=\Omega_{3^4}^2=\Omega_{3^4}^3=\mathcal{B}_{3^4}(3^4-1) ,\ \Omega_{3^4}^4=\Omega_{3^4}^5=\mathcal{B}_{3^4}(3^4-2) ,\ \Omega_{3^4}^6=\Omega_{3^4}^7=\mathcal{B}_{3^4}(3^4-3), \] 
	\[\Omega_{3^4}^8=\mathcal{B}_{3^4}(3^4-4) ,\ \Omega_{3^4}^9=\mathcal{B}_{3^4}(3^4-5) , \dots , \Omega_{3^4}^{13}=\mathcal{B}_{3^4}(3^4-9) .\]
These sets are recorded in the fourth column of Table \ref{table: p=3}. 
	
We use the second part of this example to illustrate a key step of the proof of Theorem  \ref{thm: boxes}. 
Let $n=k=4$. We wish to compute $t_4\64$. 
Following the notation introduced in the proof of Theorem \ref{thm: boxes} we have that
	\[\mathcal{L}(3)=\{(j_1,j_2,j_3)\in \mathcal{C}(3)\ |\ j_i\in [0, \alpha_{3^3}]=[0,4],\text{ for all }i\in [1,3]\}=\{(3,0,0),(2,1,0),(1,1,1)\}.\]
Working by induction we can assume that we know the values $t_3^j$ for every $j\in[0,4]$. This can be comfortably read off the third column of Table \ref{table: p=3}. We set
	\[M=\max \Set{t_3^3+t_3^0+t_3^0, t_3^2+t_3^1+t_3^0, t_3^1+t_3^1+t_3^1}=\max \Set{3^4-2,3^4-2,3^4-3}=3^4-2.\]
We conclude that $t_4\64=M=3\64-2$. 
%
%
%
\end{example}

	\begin{table}[ht]
	\caption{Let $p=3$. According to Theorem \ref{thm: p-power}, the structure of $\Omega_{p^n}^k$ is recorded in the entry corresponding to row $k$ and column $n$.} \label{table: p=3}
	\centering
	\begin{tabular}{ c || c | c | c | c }
		$\Omega_{p^n}^k$ &$n=1$ &$n=2$ &$n=3$ &$n=4$  \\[0.1ex]
		\hline
		\ & \ & \ & \ & \ \\
		$k=0$ & $\mathcal{B}_{3}(3)$ & $\mathcal{B}_{3^2}(3^2)$ & $\mathcal{B}_{3^3}(3^3)$ & $\mathcal{B}_{3^4}(3^4)$ \\
		$k=1$ & $\emptyset$ & $\mathcal{B}_{3^2}(3^2 -1)$ & $\mathcal{B}_{3^3}(3^3 -1)$ & $\mathcal{B}_{3^4}(3^4 -1)$ \\ 
		$k=2$ & $\emptyset$ & $\emptyset$ & $\mathcal{B}_{3^3}(3^3 -1)$ & $\mathcal{B}_{3^4}(3^4 -1)$ \\ 
		$k=3$ & $\emptyset$ & $\emptyset$ & $\mathcal{B}_{3^3}(3^3 -2)$ & $\mathcal{B}_{3^4}(3^4 -1)$ \\ 
		$k=4$ & $\emptyset$ & $\emptyset$ & $\mathcal{B}_{3^3}(3^3 -3)$ & $\mathcal{B}_{3^4}(3^4 -2)$ \\ 
		$k=5$ & $\emptyset$ & $\emptyset$ & $\emptyset$ & $\mathcal{B}_{3^4}(3^4 -2)$ \\ 
		$k=6$ & $\emptyset$ & $\emptyset$ & $\emptyset$ & $\mathcal{B}_{3^4}(3^4 -3)$ \\ 
		$k=7$ & $\emptyset$ & $\emptyset$ & $\emptyset$ & $\mathcal{B}_{3^4}(3^4 -3)$ \\ 	
		$k=8$ & $\emptyset$ & $\emptyset$ & $\emptyset$ & $\mathcal{B}_{3^4}(3^4 -4)$ \\ 
		$k=9$ & $\emptyset$ & $\emptyset$ & $\emptyset$ & $\mathcal{B}_{3^4}(3^4 -5)$ \\ 	
		$k=10$ & $\emptyset$ & $\emptyset$ & $\emptyset$ & $\mathcal{B}_{3^4}(3^4 -6)$ \\ 
		$k=11$ & $\emptyset$ & $\emptyset$ & $\emptyset$ & $\mathcal{B}_{3^4}(3^4 -7)$ \\ 
		$k=12$ & $\emptyset$ & $\emptyset$ & $\emptyset$ & $\mathcal{B}_{3^4}(3^4 -8)$ \\ 
		$k=13$ & $\emptyset$ & $\emptyset$ & $\emptyset$ & $\mathcal{B}_{3^4}(3^4 -9)$ \\	
		$k=14$ & $\emptyset$ & $\emptyset$ & $\emptyset$ & $\emptyset$	\\[0.1ex] 
	\end{tabular}
\end{table}

\section{Arbitrary natural numbers}

The aim of this section is to complete our investigation by extending Theorem \ref{thm: p-power} to any arbitrary natural number. In order to do this, we first extend Theorem \ref{thm: boxes}.
We recall that $p$ is a fixed odd prime.

\begin{theorem}\label{thm: boxes arb n}
Let $n\in\mathbb{N}$ and let $k\in [0,\alpha_{n}]$. There exists $T_n\6k\in [1,n]$ such that
$\Omega_n\6k=\mathcal{B}_{n}(T_n\6k)$.
Moreover, $T_n\6{k+1}\in\{T_n\6{k}-1, T_n\6{k}\},\ \text{for all}\ k\in [0,\alpha_{n}-1].$
\end{theorem}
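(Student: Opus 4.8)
The plan is to replay the argument behind Theorem \ref{thm: boxes}, with the direct-product factorisation of $P_n$ taking the role played there by the wreath-product factorisation of $P_{p^n}$. Write the $p$-adic expansion $n=\sum_{i=1}^{t}p^{n_i}$ with $n_1\ge\cdots\ge n_t\ge 0$, so that $P_n\cong P_{p^{n_1}}\times\cdots\times P_{p^{n_t}}$ and $\alpha_n=\sum_{i=1}^{t}\alpha_{p^{n_i}}$. If $t=1$ this is exactly Theorem \ref{thm: boxes}, so assume $t\ge 2$; also the case $k=0$ is covered by $\Omega_n^0=\mathrm{Irr}(\fS_n)=\mathcal B_n(n)$ \cite[Theorem 3.1]{GN}, so assume $k\ge 1$. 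By Theorem \ref{thm: boxes}, for each $i$ and each $a\in[0,\alpha_{p^{n_i}}]$ there is an integer $t_{n_i}^{a}$ with $\Omega_{p^{n_i}}^{a}=\mathcal B_{p^{n_i}}(t_{n_i}^{a})$, with $t_{n_i}^{a}\in\bigl[\tfrac{p^{n_i}+1}{2},\,p^{n_i}\bigr]$ and $t_{n_i}^{a+1}\in\{t_{n_i}^{a}-1,\,t_{n_i}^{a}\}$. I would then set
$$T_n^{k}=\max\Bigl\{\,\textstyle\sum_{i=1}^{t}t_{n_i}^{a_i}\ \Big|\ (a_1,\dots,a_t)\in\mathcal C(k),\ a_i\in[0,\alpha_{p^{n_i}}]\ \text{for all }i\,\Bigr\}.$$
The set of admissible tuples is non-empty because $0\le k\le\alpha_n=\sum_i\alpha_{p^{n_i}}$, and since $1\le t_{n_i}^{a_i}\le p^{n_i}$ we get $T_n^{k}\in[1,n]$ at once.

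For $\mathcal B_n(T_n^k)\subseteq\Omega_n^k$, fix a tuple $(a_1,\dots,a_t)$ attaining the maximum. Because $t_{n_i}^{a_i}>p^{n_i}/2$ for every $i$, a straightforward induction shows that at every stage of the iterated $\star$-product the accumulated box parameter $s$ and ground size $m$ satisfy $m/2<s\le m$, so repeated application of Lemma \ref{lem: B star} yields $\mathcal B_{p^{n_1}}(t_{n_1}^{a_1})\star\cdots\star\mathcal B_{p^{n_t}}(t_{n_t}^{a_t})=\mathcal B_n(T_n^k)$. Rewriting each factor via Theorem \ref{thm: boxes} and applying Lemma \ref{lem: Omega star-R-times} gives $\mathcal B_n(T_n^k)=\Omega_{p^{n_1}}^{a_1}\star\cdots\star\Omega_{p^{n_t}}^{a_t}\subseteq\Omega_n^k$.

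For the opposite inclusion I would first note that $\Omega_n^k$ is closed under conjugation of partitions: as $p$ is odd, $P_n\le\fA_n$, so $\mathrm{sgn}$ restricts trivially to $P_n$ and $(\chi^{\lambda'})_{P_n}=(\chi^\lambda)_{P_n}$. Hence it is enough to bound $\lambda_1$ for $\lambda\in\Omega_n^k$. Fix an irreducible constituent $\theta$ of $(\chi^\lambda)_{P_n}$ with $\theta(1)=p^k$; since $P_n=P_{p^{n_1}}\times\cdots\times P_{p^{n_t}}$ we may write $\theta=\phi_1\times\cdots\times\phi_t$ with $\phi_i\in\Irr(P_{p^{n_i}})$ and $\phi_i(1)=p^{a_i}$, where $(a_1,\dots,a_t)\in\mathcal C(k)$ and $a_i\in[0,\alpha_{p^{n_i}}]$. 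Restricting $\chi^\lambda$ first to the Young subgroup $Y=\fS_{p^{n_1}}\times\cdots\times\fS_{p^{n_t}}$ and then to $P_n\le Y$ (as in the proof of Proposition \ref{prop: partial arbitrary n}) produces partitions $\mu_i\in\mathcal P(p^{n_i})$ with $\mathcal{LR}(\lambda;\mu_1,\dots,\mu_t)\ne 0$ and $\phi_i$ an irreducible constituent of $(\chi^{\mu_i})_{P_{p^{n_i}}}$. Thus $\mu_i\in\Omega_{p^{n_i}}^{a_i}=\mathcal B_{p^{n_i}}(t_{n_i}^{a_i})$, and Lemma \ref{lem: LR} gives $\lambda_1\le\sum_{i=1}^{t}(\mu_i)_1\le\sum_{i=1}^{t}t_{n_i}^{a_i}\le T_n^k$. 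So $\lambda\in\mathcal B_n(T_n^k)$, and combining the two inclusions, $\Omega_n^k=\mathcal B_n(T_n^k)$.

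The monotonicity statement is then proved exactly as at the end of the proof of Theorem \ref{thm: boxes}. For $T_n^{k+1}\le T_n^k$: take $(b_1,\dots,b_t)$ attaining $T_n^{k+1}$, pick $i_0$ with $b_{i_0}\ge 1$, pass to $(b_1,\dots,b_{i_0}-1,\dots,b_t)\in\mathcal C(k)$, and use $t_{n_{i_0}}^{b_{i_0}}\le t_{n_{i_0}}^{b_{i_0}-1}$. For $T_n^{k+1}\ge T_n^k-1$ (which needs $k<\alpha_n$): take $(a_1,\dots,a_t)$ attaining $T_n^k$; since $\sum_i a_i=k<\alpha_n=\sum_i\alpha_{p^{n_i}}$ there is $i_0$ with $a_{i_0}<\alpha_{p^{n_{i_0}}}$, pass to $(a_1,\dots,a_{i_0}+1,\dots,a_t)\in\mathcal C(k+1)$, and use $t_{n_{i_0}}^{a_{i_0}+1}\ge t_{n_{i_0}}^{a_{i_0}}-1$. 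I do not expect a genuine obstacle: all the real content sits in Theorems \ref{thm: boxes} and \ref{thm: p-power}, and the only place demanding a little care is verifying the condition $m/2<s\le m$ along the iterated $\star$-product in the first inclusion, which is exactly what the uniform bound $t_m^{j}\ge\tfrac{p^m+1}{2}$ from Theorem \ref{thm: boxes} provides.
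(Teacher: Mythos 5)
Your proposal is correct and follows essentially the same route as the paper: reduce to the prime-power case via $P_n\cong P_{p^{n_1}}\times\cdots\times P_{p^{n_t}}$, define $T_n^k$ as the maximum of $\sum_i t_{n_i}^{a_i}$ over admissible compositions of $k$, prove the two inclusions with Lemmas \ref{lem: B star}, \ref{lem: Omega star-R-times} and \ref{lem: LR}, and derive the monotonicity from $t_{n_i}^{a+1}\in\{t_{n_i}^{a}-1,t_{n_i}^{a}\}$. The only differences are cosmetic: you make explicit the conjugation-closure of $\Omega_n^k$ and the hypothesis check for Lemma \ref{lem: B star}, and you dispense with the paper's (unused) induction on $n$.
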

\begin{proof}
We proceed by induction on $n\in \mathbb{N}$. If $n=1$, then necessarily $k=0$ and $\Omega_1^0 =\mathcal{B}_1(1)$.
If $n\geq 2$, let $n=\sum_{i=1}^t p^{n_i}$ be the $p$-adic expansion of $n$, with $n_1\geq \dots \geq n_t\geq 0$. By Theorem \ref{thm: boxes}, for every $i\in [1,t]$ and every $d_i \in [0,\alpha_{p^{n_i}}]$, there exists $t_{n_i}^{d_i} \in [\frac{p\6{n_i}}{2}+1, p\6{n_i}]$ such that $\Omega_{p^{n_i}}^{d_i}=\mathcal{B}_{p^{n_i}} \left( t_{n_i}^{d_i} \right) $.
Similarly to the procedure used to prove Theorem \ref{thm: boxes}, we define 
$$\mathcal{J}(k)=\{(j_1,\dots ,j_t)\in \mathcal{C}(k)\ |\ j_i \in [0,\alpha_{p^{n_i}}] \text{   for all } i\in [1,t]\}.$$
Moreover, we set
\[M= \max \Set{\sum_{i=1}^t t_{n_i}^{d_i} | (d_1 , \dots , d_t) \in\mathcal{J}(k)}.\]
We claim that $\Omega_n^k =\mathcal{B}_n(M)$.

Let $(d_1,\ldots, d_t)\in\mathcal{J}(k)$ be such that $M=\sum_{i=1}^t t_{n_i}^{d_i}$. Then 
using Lemma \ref{lem: B star}, Theorem \ref{thm: boxes} and Lemma \ref{lem: Omega star-p-times} we have that
$$\mathcal{B}_n(M)= \mathcal{B}_{p^{n_1}}\left( t_{n_1}^{d_1} \right) \star \cdots \star \mathcal{B}_{p^{n_t}}\left( t_{n_t}^{d_t} \right)=\Omega_{p\6{n_1}}\6{d_1}\star \cdots \star\Omega_{p\6{n_t}}\6{d_t}\subseteq \Omega_n\6k.$$

Suppose now for a contradiction that $\lambda\in \Omega_n^k \smallsetminus \mathcal{B}_n(M)$. Without loss of generality we can assume that $\lambda_1 \geq M+1$.
Let $\phi=\phi_1 \times \cdots \times \phi_t$ be an irreducible constituent of $(\chi^\lambda)_{P_n}$ with $\phi_i(1)=p^{d_i}$ for every $i\in [1,t]$ and $\sum_{i=1}^t d_i =k$. 
Observe that $(d_1,\ldots, d_t)\in\mathcal{J}(k)$.
For every $i\in [1,t]$, let $\mu_i \in \mathcal{P}(p^{n_i})$ be such that $[ (\chi^{\mu_i})_{P_{p^{n_i}}} , \phi_i] \neq 0$ and such that $\chi^{\mu_1}\times \cdots \times \chi^{\mu_t}$ is an irreducible constituent of $(\chi^\lambda)_Y$. Here $Y=\fS_{p^{n_1}}\times\cdots\times\fS_{p^{n_t}}\leq \fS_n$ is chosen so that $P_n\leq Y$. Thus by Theorem \ref{thm: p-power}, $\mu_i \in \Omega_{p^{n_i}}^{d_i}=\mathcal{B}_{p^{n_i}}\left(t_{n_i}^{d_i}\right)$ for every $i\in [1,t]$. Hence,
\[\lambda \in \mathcal{B}_{p^{n_1}} \left(t_{n_1}^{d_1} \right) \star \cdots \star \mathcal{B}_{p^{n_t}} \left(t_{n_t}^{d_t} \right) =\mathcal{B}_n \left( \sum_{i=1}^t t_{n_i}^{d_i} \right) .\]
By Lemma \ref{lem: LR} and our assumptions, we have that
\[M+1 \leq \lambda_1 \leq \sum_{i=1}^t t_{n_i}^{d_i} \leq M ,\]
which is a contradiction.

In summary, for $k\in [0,\alpha_n-1]$ the following holds:
\[ \begin{split} \Omega_n^k = \mathcal{B}_n(M), &\text{ where } M= \max \Set{\sum_{i=1}^t t_{n_i}^{d_i} | (d_1,\dots ,d_t) \in \mathcal{J}(k)}, \text{ and }
\\ \Omega_n^{k+1} =\mathcal{B}_n(T), &\text{ where } T=\max \Set{\sum_{i=1}^t t_{n_i}^{f_i} | (f_1,\dots ,f_t) \in \mathcal{J}(k+1)}.
\end{split} \] 
Let $(d_1,\dots ,d_t)\in \mathcal{J}(k)$ be such that $M= \sum_{i=1}^t t_{n_i}^{d_i}$. 
Since $k\leq \alpha_n-1$, there exists $i\in [1,t]$ such that $d_i\leq \alpha_{p\6{n_i}}-1$. Hence
$(d_1, \dots,d_{i-1}, d_i+1, d_{i+1},\ldots, d_t) \in \mathcal{J}(k+1)$ and $t_{n_i}^{d_i +1} \in \Set{ t_{n_i}^{d_i}-1, t_{n_i}^{d_i} }$,  by Theorem \ref{thm: boxes}. Thus,
$$M-1=-1+\sum_{i=1}^t t_{n_i}^{d_i} \leq t_{n_1}^{d_1} +\cdots +t_{n_{i-1}}^{d_{i-1}} +t_{n_{i}}^{d_{i}+1}+t_{n_{i+1}}^{d_{i+1}}+ \cdots + t_{n_t}^{d_t} \leq T.$$
On the other hand, let $(f_1,\dots ,f_t)\in \mathcal{J}(k+1)$ be such that $T=\sum_{i=1}^t t_{n_i}^{f_i}$. Without loss of generality we can assume that $f_1\geq 1$. Then $(f_1 -1, f_2, \dots , f_t) \in \mathcal{J}(k)$ and by Theorem \ref{thm: boxes}, $t_{n_1}^{f_1} \in \Set{t_{n_1}^{f_1 -1}-1, t_{n_1}^{f_1 -1} }$. Hence
\[T=\sum_{i=1}^t t_{n_i}^{f_i} \leq  t_{n_1}^{f_1 -1}+  t_{n_2}^{f_2} + \cdots +  t_{n_t}^{f_t} \leq M.\]
It follows that $T=M$ or $T=M-1$. This concludes the proof. 	
\end{proof}

Theorem \ref{thm: boxes arb n} shows that for every $n\in\mathbb{N}$ and $k\in [0,\alpha_n]$ there exists an integer, denoted by $T_n\6k$, such that $\Omega_n\6k=\mathcal{B}_n(T_n\6k)$.
In order to prove our main result, i.e. to precisely compute the value $T_n\6k$ for all $n\in\mathbb{N}$ and $k\in [0,\alpha_{n}]$, we start by fixing some notation that will be kept throughout this section. 
We remark that for $n<p\62$ we have that $P_n$ is abelian and that $\Omega_n\60=\mathcal{P}(n)$. For this reason we focus on the case $n\geq p\62$. 

\begin{notation} \label{notation_n} 
Let $n\geq p\62$ be a natural number and let $n=\sum_{i=1}\6{t}p\6{n_i}$ be the $p$-adic expansion of $n$, where $n_1\geq n_2\geq\cdots \geq n_t\geq 0$.
Let $\mathcal{R}:=\{(i, y)\ |\ i\in [1,t],\ \text{and}\ y\in [1,p\6{n_i -2}]\}$. 
We define a total order $\triangleright$ on $\mathcal{R}$ as follows. 
Given $(i, y)$ and $(j, z)$ in $\mathcal{R}$ we say that $(i, y)\triangleright (j, z)$ if and only if one of the following hold: 
\begin{itemize}
\item[(i)] $\ell(n_i,y)>\ell(n_j,z)$, or 

\item[(ii)]  $\ell(n_i,y)=\ell(n_j,z)$ and $i<j$, or 

\item[(iii)] $\ell(n_i,y)=\ell(n_j,z)$ and $i=j$ and $y<z$. 
\end{itemize}

Let $N:=\lfloor \frac{n}{p\62} \rfloor$ and notice that $N=|\mathcal{R}|$. Let $\phi: \mathcal{R}\longrightarrow [1,N]$ be the bijection mapping $(i,y)\mapsto x$ if and only if 
the pair $(i,y)$ is the $x$-th greatest element in the totally ordered set $(\mathcal{R}, \triangleright)$.
We use this bijection to relabel the integers $\ell(n_i, y)$, for all $(i, y)\in \mathcal{R}$. In particular, we let $\ell(x):=\ell(n_i, y)$ if $\phi((i, y))=x$. 
Recalling Definition \ref{def: Ax},
we observe that the definition of $\triangleright$ implies that
$\ell(1)\geq \ell(2)\geq \cdots\geq \ell(N).$

Finally, for any $\alpha\in [1, N]$ we let $F_n(\alpha)=\sum_{a=1}\6\alpha\ell(a)$ and
$A_\alpha=[\{F_n(\alpha-1)+1, F_n(\alpha)\}]$. We observe that $\{A_1, A_2, \ldots, A_{N}\}$ is a partition of $[1,\alpha_n]$ (this follows easily from Lemma \ref{lem: 3x}). 
We refer the reader to Example \ref{ex: final} for an explicit description of these objects in a concrete case.
\end{notation}



\begin{theorem} \label{thm: n}
Let $n\in \mathbb{N}_{\geq p\62}$ and $k\in [1,\alpha_n]$. Let $x\in [1, N]$ be such that $k\in A_x$. Then $$\Omega_n\6k=\mathcal{B}_n(n-x).$$
\end{theorem}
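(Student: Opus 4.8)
The plan is to prove Theorem \ref{thm: n} by induction on $n$, following closely the structure of the proof of Theorem \ref{thm: p-power} but now using the combinatorial bookkeeping encoded in Notation \ref{notation_n}. By Theorem \ref{thm: boxes arb n} we already know that $\Omega_n^k = \mathcal{B}_n(T_n^k)$ for some integer $T_n^k$, and that $T_n^k$ decreases by at most $1$ as $k$ increases by $1$; so it suffices to pin down the values $T_n^k$ on the boundaries of the blocks $A_x$. Concretely, writing $F_n(x) = \sum_{a=1}^x \ell(a)$ as in Notation \ref{notation_n}, it is enough to prove the two equalities
\[
\Omega_n^{F_n(x-1)+1} = \mathcal{B}_n(n-x) \qquad\text{and}\qquad \Omega_n^{F_n(x)} = \mathcal{B}_n(n-x)
\]
for every $x\in[1,N]$, since Theorem \ref{thm: boxes arb n} then forces $\Omega_n^k = \mathcal{B}_n(n-x)$ for all $k\in A_x = [F_n(x-1)+1, F_n(x)]$. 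The base case $n = p^2$ (or more generally $n$ a prime power) is Theorem \ref{thm: p-power}; for the inductive step we fix $n$ with $p$-adic expansion $n = \sum_{i=1}^t p^{n_i}$ and use that $P_n = P_{p^{n_1}}\times\cdots\times P_{p^{n_t}}$ together with Proposition \ref{prop: partial arbitrary n}.

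For the lower bound $\mathcal{B}_n(n-x)\subseteq \Omega_n^{F_n(x)}$ (which also handles $F_n(x-1)+1$ by monotonicity, since $F_n(x-1)+1 \le F_n(x)$), the idea is to distribute the ``budget'' $F_n(x)$ optimally across the wreath product factors. By Notation \ref{notation_n}, the index $x$ corresponds, via the bijection $\phi$, to choosing the $x$ largest of the quantities $\ell(n_i,y)$; equivalently, $x$ counts a collection of pairs $(i,y)$, and for each factor $i$ this selects an initial segment $[1,z_i]$ of the allowed values of $y$. Writing $f_{p^{n_i}}$ for the prime-power analogue of $F$ (i.e. the function $f_n$ from the proof of Theorem \ref{thm: p-power}), one checks that $F_n(x) = \sum_{i=1}^t f_{p^{n_i}}(z_i)$ with $\sum_i z_i = x$ — this is the combinatorial heart and is exactly why $\triangleright$ is defined the way it is (greedily selecting the largest block sizes first maximises the budget for a given number $x$ of blocks). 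Then by the prime-power case (Theorem \ref{thm: p-power}) each $\Omega_{p^{n_i}}^{f_{p^{n_i}}(z_i)} = \mathcal{B}_{p^{n_i}}(p^{n_i}-z_i)$, and Lemma \ref{lem: Omega star-R-times} together with Lemma \ref{lem: B star} gives
\[
\Omega_n^{F_n(x)} \supseteq \Omega_{p^{n_1}}^{f_{p^{n_1}}(z_1)}\star\cdots\star\Omega_{p^{n_t}}^{f_{p^{n_t}}(z_t)} = \mathcal{B}_{p^{n_1}}(p^{n_1}-z_1)\star\cdots\star\mathcal{B}_{p^{n_t}}(p^{n_t}-z_t) = \mathcal{B}_n\Bigl(\sum_i(p^{n_i}-z_i)\Bigr) = \mathcal{B}_n(n-x).
\]
(Here one must verify the hypotheses $\tfrac{p^{n_i}}{2} < p^{n_i} - z_i$ of Lemma \ref{lem: B star}, which holds because $z_i \le p^{n_i-2} < p^{n_i}/2$.)

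For the upper bound $\Omega_n^{F_n(x-1)+1}\subseteq \mathcal{B}_n(n-x)$, by Theorem \ref{thm: boxes arb n} and the inductive knowledge that $\Omega_n^{F_n(x-1)} = \mathcal{B}_n(n-(x-1))$ (from the $A_{x-1}$ case), we already know $\Omega_n^{F_n(x-1)+1} = \mathcal{B}_n(T)$ with $T\in\{n-x, n-(x-1)\}$, so it suffices to exhibit a single partition in $\mathcal{B}_n(n-(x-1))\smallsetminus\mathcal{B}_n(n-x)$ that is \emph{not} in $\Omega_n^{F_n(x-1)+1}$; the natural candidate is $\lambda = (n-(x-1), x-1)$ (with a transpose or slightly different shape if $x-1 > n-(x-1)$, which does not occur in the relevant range since $x \le N \le n/p^2$). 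For this $\lambda$ one argues via Proposition \ref{prop: partial arbitrary n}: any constituent $\chi^{\mu_1}\times\cdots\times\chi^{\mu_t}$ of $(\chi^\lambda)_Y$ with $\mathcal{LR}(\lambda;\mu_1,\dots,\mu_t)\neq 0$ has, by Lemma \ref{lem: LR}, first parts satisfying $(\mu_i)_1 = p^{n_i} - a_i$ with $\sum_i a_i \le x-1$; hence $\mu_i \notin \Omega_{p^{n_i}}^{f_{p^{n_i}}(a_i)+1}$ by the prime-power case, so $\partial_{P_{p^{n_i}}}(\chi^{\mu_i}) \le f_{p^{n_i}}(a_i)$. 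Summing and using the convexity/monotonicity of the $\ell$'s (the analogue of Lemma \ref{lem: ell(n,j)}, namely that $\sum_i f_{p^{n_i}}(a_i) \le F_n(x-1) - 1$ whenever $\sum_i a_i \le x-1$ — this again is where the greedy ordering $\triangleright$ is essential), we get $\partial_{P_n}(\chi^\lambda) \le F_n(x-1) < F_n(x-1)+1$, so $\lambda\notin\Omega_n^{F_n(x-1)+1}$, forcing $T = n-x$.

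I expect the main obstacle to be the purely combinatorial lemma underpinning both halves: that for a fixed number of blocks $x$, summing the ``prime-power'' budgets $f_{p^{n_i}}(z_i)$ over a partition $x = \sum z_i$ is maximised (and the corresponding square-sizes $n - \sum z_i$ are minimised) exactly by the greedy rule embodied in $\triangleright$, with the off-by-one bookkeeping $F_n(x) = \sum_i f_{p^{n_i}}(z_i)$ working out cleanly. This is a finite but fiddly rearrangement argument, essentially a multi-factor generalisation of Lemma \ref{lem: ell(n,j)}; once it is established, both the inclusion via $\star$-products and the obstruction via $\partial_{P_n}$ follow mechanically from the results already proved. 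A secondary point requiring care is ensuring the chosen obstructing partition $(n-(x-1), x-1)$ genuinely lies in $\mathcal{B}_n(n-(x-1))\smallsetminus\mathcal{B}_n(n-x)$ and that the range of $x$ under consideration keeps it a valid partition; this uses $x \le N = \lfloor n/p^2\rfloor$.
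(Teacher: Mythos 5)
Your proposal is correct and follows essentially the same route as the paper: sandwiching $\Omega_n^k$ between the block endpoints $F_n(x-1)+1$ and $F_n(x)$ via Theorem \ref{thm: boxes arb n}, proving the lower bound by a greedy distribution of the budget across the $p$-adic factors combined with Lemma \ref{lem: Omega star-R-times}, Theorem \ref{thm: p-power} and Lemma \ref{lem: B star}, and proving the upper bound by showing $\partial_{P_n}(\chi^{(n-(x-1),x-1)})\leq F_n(x-1)$ via Lemma \ref{lem: LR}, Proposition \ref{prop: partial arbitrary n} and an exchange argument exploiting the ordering $\triangleright$ (the paper's $E-F\leq 0$ step, which is exactly the ``fiddly rearrangement'' you anticipate). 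Note only that your induction is really on the block index $x$ rather than on $n$, exactly as in the paper.
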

\begin{proof}
As in Notation \ref{notation_n}, let $n=\sum_{i=1}\6{t}p\6{n_i}$ be the $p$-adic expansion of $n$, where $n_1\geq n_2\geq\cdots \geq n_t\geq 0$. 
We proceed by induction on $x$. If $x=1$ then $k\in A_1=[1,\ell(1)]=[1,\ell(n_1,1)]$, because $\phi((n_1,1))=1$. By Theorem \ref{thm: p-power} we know that $\Omega_{p\6{n_1}}\6k=\mathcal{B}_{p\6{n_1}}(p\6{n_1}-1)$. Moreover, $\Omega_{p\6{m}}\60=\mathcal{B}_{p^m}(p\6m)$ for all $m\in\mathbb{N}$ by \cite[Theorem 3.1]{GN}. 
Thus, using first Lemma \ref{lem: Omega star-R-times} and then Lemma \ref{lem: B star}, we deduce that $$\Omega_{n}\6{k}\supseteq\Omega_{p\6{n_1}}\6{k}\star\Omega_{p\6{n_2}}\6{0}\star\cdots\star\Omega_{p\6{n_t}}\6{0}=\mathcal{B}_{p\6{n_1}}(p\6{n_1}-1)\star\mathcal{B}_{p\6{n_2}}(p\6{n_2})\star\cdots\star\mathcal{B}_{p\6{n_t}}(p\6{n_t})=\mathcal{B}_n(n-1).$$
Since $(n)\notin \Omega_n\6k$, we conclude that $\Omega_n\6k=\mathcal{B}_n(n-1)$, as desired. 
Let us now set $x\geq 2$ and assume that the statement holds for any $s\in A_{x-1}=[F_n(x-1)+1,F_n(x)]$. 
From Theorem \ref{thm: boxes arb n} we know that $$\Omega_{n}\6{F_n(x-1)+1}\subseteq \Omega_n\6k\subseteq \Omega_n\6{F_n(x)},$$
hence it is enough to show that:  
$$(1)\ \Omega_n\6{F_n(x-1)+1}=\mathcal{B}_n(n-x),\ \text{and that}\ (2)\ \Omega_n\6{F_n(x)}=\mathcal{B}_n(n-x).$$
Here $F_n(y)=\sum_{j=1}\6y\ell(j)$, exactly as explained in Notation \ref{notation_n}. 

To prove (1), we first notice that $\Omega_n\6{F_n(x-1)}=\mathcal{B}_n(n-(x-1))$ by inductive hypothesis. Hence, Theorem \ref{thm: boxes arb n} implies that $\Omega_n\6{F_n(x-1)+1}=\mathcal{B}_n(T)$, for some $T\in\{n-x, n-(x-1)\}$. Therefore it suffices to prove that $\lambda=(n-(x-1), x-1)\notin \Omega_n\6{F_n(x-1)+1}$. 
Let $\{G_1, G_2, \ldots, G_t\}$ be the partition of $[1,x-1]$ defined by 
$$G_i=\{y\in [1,x-1]\ |\ \phi\6{-1}(y)=(i, z),\ \text{for some}\ z\in [1,p\6{n_i-2}]\},\  \text{for all}\ i\in [1,t].$$
To ease the notation we let $g_i=|G_i|$ for all $i\in [1,t]$, and we remark that $g_1+g_2+\cdots +g_t=x-1$.

Let $Y=\fS_{p\6{n_1}}\times\fS_{p\6{n_2}}\times  \cdots\times \fS_{p\6{n_t}}$ be a Young subgroup of $\fS_n$ containing $P_n$. 
For every $i\in [1,t]$ let $\mu\6i\in\mathcal{P}(p\6{n_i})$ be such that $\mathcal{LR}(\lambda; \mu\61,\ldots, \mu\6t)\neq 0$. Then Lemma \ref{lem: LR} implies that there exist $a_1,a_2,\ldots, a_t\in \mathbb{Z}$ such that $$(\mu\6i)_1=p\6{n_i}-(g_i+a_i)\ \text{for all}\ i\in [1,t],\ \text{and such that}\ \sum_{i=1}\6t a_i\leq 0.$$ 
In particular, using Theorem \ref{thm: p-power} we have that for every $i\in [1,t]$,
$$\mu\6i\in\mathcal{B}_{p\6{n_i}}(p\6{n_i}-(g_i+a_i))\smallsetminus\mathcal{B}_{p\6{n_i}}(p\6{n_i}-(g_i+a_i+1))=\Omega_{p\6{n_i}}\6{f_{n_i}(g_i+a_i)}\smallsetminus\Omega_{p\6{n_i}}\6{f_{n_i}(g_i+a_i)+1}.$$
Recycling the notation used in the proof of Theorem \ref{thm: p-power}, here $f_m(a):=\sum_{j=1}\6a\ell(m, j)$.
It follows that 
\[\partial_{P_{p\6{n_i}}}(\chi\6{\mu\6i})=\sum_{j=1}\6{g_i+a_i}\ell(n_i, j)=\begin{cases}
\sum_{y\in G_i}\ell(y) + \sum_{j=g_i+1}\6{g_i+a_i}\ell(n_i, j) & \mathrm{if}\ a_i\geq 0, \\ 
\\ 
\sum_{y\in G_i}\ell(y) - \sum_{j=g_i+a_i}\6{g_i}\ell(n_i, j) & \mathrm{if}\ a_i< 0. \end{cases}\]

\medskip

Hence, letting $\chi=\chi\6{\mu^1}\times\chi\6{\mu^2}\times\cdots\times\chi\6{\mu^t}$, we have that 
$$\partial_{P_n}(\chi)=\sum_{i=1}\6{t}\sum_{y\in G_i}\ell(y) + E - F,\ \text{where}\ E=\sum_{\substack{i=1 \\ a_i> 0}}\6{t}\sum_{j=g_i+1}\6{g_i+a_i}\ell(n_i, j),\ \text{and}\ F=\sum_{\substack{i=1 \\ a_i< 0}}\6{t}\sum_{j=g_i+a_i}\6{g_i}\ell(n_i, j).$$

We claim that $E-F\leq 0$. To see this, we notice that the definition of the set $G_i$ implies that $\phi((i, y))>x-1$ for all $y\geq g_i+1$. On the other hand, for the same reasons, we have that 
$\phi((j, z))\leq x-1$ for all $z\leq g_j$. Therefore every summand $\ell(n_i, y)$ appearing in $E$ is smaller than or equal to any summand $\ell(n_j, z)$ appearing in $F$. Since $\sum_{i=1}\6ta_i\leq 0$ we have that
$E-F\leq 0$, as desired. 
Using Proposition \ref{prop: partial arbitrary n} we conclude that 
$$\partial_{P_n}(\chi\6\lambda)\leq \sum_{i=1}\6{t}\sum_{y\in G_i}\ell(y)=\sum_{y=1}\6{x-1}\ell(y)=F_n(x-1)<F_n(x-1)+1.$$
Hence $\lambda\notin \Omega_n\6{F_n(x-1)+1}$ and therefore $\Omega_n\6{F_n(x-1)+1}=\mathcal{B}_n(n-x)$ as required.

To prove (2) we observe that the equality (1) shown above implies that $\Omega_n\6{F_n(x)}\subseteq \mathcal{B}_n(n-x)$, by Theorem \ref{thm: boxes arb n}.
To show that the opposite inclusion holds we use an idea that is similar to the one used to prove (1).  
In particular, we let $\{H_1, H_2, \ldots, H_t\}$ be the partition of $[1,x]$ defined by 
$$H_i=\{y\in [1,x]\ |\ \phi\6{-1}(y)=(i, z),\ \text{for some}\ z\in [1,p\6{n_i-2}]\},\  \text{for all}\ i\in [1,t].$$
To ease the notation we let $h_i=|H_i|$ for all $i\in [1,t]$, and we remark that $h_1+h_2+\cdots +h_t=x$.
We also introduce the following notation. For each $i\in [1,t]$, we let $$\Gamma_i:=\sum_{y\in H_i}\ell(y)=\sum_{j=1}\6{h_i}\ell(n_i,j)=f_{n_i}(h_i).$$
We observe that $(\Gamma_1, \Gamma_2, \ldots, \Gamma_t) \in \mathcal{C}(F_n(x))$ and that $\Gamma_i\in [0,\alpha_{p\6{n_i}}]$, for all $i\in [1,t]$. 
We can now use Lemma \ref{lem: Omega star-R-times}, Theorem \ref{thm: p-power} and Lemma \ref{lem: B star} (in this order) to deduce that
$$\Omega_n\6{F_n(x)}\supseteq \Omega_{p\6{n_1}}\6{\Gamma_1}\star\Omega_{p\6{n_2}}\6{\Gamma_2}\star\cdots\star\Omega_{p\6{n_t}}\6{\Gamma_t}=\mathcal{B}_{p\6{n_1}}(p\6{n_1}-h_1)\star\mathcal{B}_{p\6{n_2}}(p\6{n_2}-h_2)\star\cdots\star\mathcal{B}_{p\6{n_t}}(p\6{n_t}-h_t)=\mathcal{B}_n(n-x).$$
We obtain that $\Omega_n\6{F_n(x)}=\mathcal{B}_n(n-x)$, and the proof is concluded. 
\end{proof}

As we have done for the prime power case in Corollary \ref{cor: p^n}, we record some facts to understand better the set $\Omega_n^k$ for every $n\in \mathbb{N}$ and $k\in [0,\alpha_n]$.
Keeping the notation introduced in \ref{notation_n}, we recall that $N=\lfloor\frac{n}{p\62}\rfloor$. 
\begin{corollary} \label{cor: n}
	Let $n\in \mathbb{N}$ and $n=\sum_{i=1}\6{t}p\6{n_i}$ its $p$-adic expansion, where $n_1\geq n_2\geq\cdots \geq n_t\geq 0$. Let $1\leq k<t \leq \alpha_n$. The following hold.
	\begin{itemize}
	\item[(i)] $\mathcal{B}_n(n-N)=\Omega_n^{\alpha_n} \subseteq \Omega_n^t\subseteq \Omega_n^k$.
		\item[(ii)] $\Omega_n^k =\Omega_n^t$ if, and only if, there exists $x \in [1,N]$ such that $k,t \in A_x$.
		\item[(iii)] Given $x\in [1,N]$ we have that $|\Set{k\in [1, \alpha_n] | \Omega_n^k =\mathcal{B}_n(n-x) }| =\ell(x)$.
	\end{itemize}
\end{corollary}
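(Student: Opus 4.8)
The plan is to derive all three statements in Corollary \ref{cor: n} directly from Theorem \ref{thm: n}, exactly in the spirit of the proof of Corollary \ref{cor: p^n}. The key structural input is Notation \ref{notation_n}: the set $[1,\alpha_n]$ is partitioned into the intervals $A_1, A_2, \dots, A_N$, where $N = \lfloor n/p^2\rfloor$, each $A_x = [F_n(x-1)+1, F_n(x)]$ has size $\ell(x)$, and $\ell(1)\geq \ell(2)\geq \cdots\geq \ell(N)$. Theorem \ref{thm: n} tells us that for $k\in A_x$ we have $\Omega_n^k = \mathcal{B}_n(n-x)$, while $\Omega_n^0 = \mathcal{P}(n) = \mathcal{B}_n(n)$ (or rather $\mathcal{B}_n(T_n^0)$ with $T_n^0 = n$, which is consistent with setting ``$x=0$'' in the formula).

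For part (iii), I would simply observe that $\{k\in [1,\alpha_n]\ |\ \Omega_n^k = \mathcal{B}_n(n-x)\}$ is exactly $A_x$ by Theorem \ref{thm: n} (since the intervals $A_1,\dots,A_N$ are disjoint and the values $n-1 > n-2 > \cdots > n-N$ are pairwise distinct, no overlap can occur), and $|A_x| = \ell(x)$ by construction in Notation \ref{notation_n}. For part (ii): if $k,t$ lie in the same $A_x$, then both $\Omega_n^k$ and $\Omega_n^t$ equal $\mathcal{B}_n(n-x)$; conversely, if they lie in different intervals $A_x$ and $A_{x'}$ with $x\neq x'$, then $\Omega_n^k = \mathcal{B}_n(n-x)\neq \mathcal{B}_n(n-x') = \Omega_n^t$ because $n-x\neq n-x'$ and the map $s\mapsto \mathcal{B}_n(s)$ is injective on $[1,n]$ (a partition recording the maximal first-row length). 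This handles the case $1\le k<t\le\alpha_n$; one should note the edge case $k$ or $t$ equal to $0$ is excluded by hypothesis.

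For part (i), the chain $\Omega_n^{\alpha_n}\subseteq \Omega_n^t\subseteq \Omega_n^k$ follows from Theorem \ref{thm: boxes arb n}: since $T_n^{j+1}\in\{T_n^j-1, T_n^j\}$ for all $j$, the sequence $T_n^0\geq T_n^1\geq\cdots\geq T_n^{\alpha_n}$ is non-increasing, hence $k\le t\le\alpha_n$ gives $\mathcal{B}_n(T_n^{\alpha_n})\subseteq\mathcal{B}_n(T_n^t)\subseteq\mathcal{B}_n(T_n^k)$, i.e. the displayed inclusions. It remains to identify $\Omega_n^{\alpha_n}$ with $\mathcal{B}_n(n-N)$. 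Since $\alpha_n = F_n(N) \in A_N$ (because $\sum_{x=1}^N \ell(x) = \alpha_n$ by Lemma \ref{lem: 3x} applied coordinatewise, as recorded in Notation \ref{notation_n}), Theorem \ref{thm: n} gives $\Omega_n^{\alpha_n} = \mathcal{B}_n(n-N)$ directly, with $N=\lfloor n/p^2\rfloor = |\mathcal{R}|$.

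I do not expect any genuine obstacle here: the corollary is a bookkeeping consequence of Theorem \ref{thm: n} and Theorem \ref{thm: boxes arb n}, and the only points requiring a word of care are the injectivity of $s\mapsto\mathcal{B}_n(s)$ (immediate) and checking that $\alpha_n$ indeed belongs to $A_N$ rather than to an earlier interval (which is exactly the content of $\sum_x \ell(x) = \alpha_n$). The proof can therefore be written in a few lines, closely mirroring the proof of Corollary \ref{cor: p^n}: ``Recalling that $|A_x| = \ell(x)$ for every $x\in[1,N]$, and that $\alpha_n\in A_N$, statements (i), (ii) and (iii) follow immediately from Theorems \ref{thm: boxes arb n} and \ref{thm: n}.''
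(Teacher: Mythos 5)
Your proposal is correct and follows essentially the same route as the paper, which likewise deduces all three parts directly from Theorem \ref{thm: n} together with the facts that $|A_x|=\ell(x)$ and that $\{A_1,\dots,A_N\}$ partitions $[1,\alpha_n]$. The extra details you supply (injectivity of $s\mapsto\mathcal{B}_n(s)$, the monotonicity of $T_n^k$ from Theorem \ref{thm: boxes arb n}, and the check that $\alpha_n=F_n(N)\in A_N$) are exactly the bookkeeping the paper leaves implicit.
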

\begin{proof}
	Since $|A_x|=\ell(x)$ for every $x\in [1,N]$, (i), (ii) and (iii) hold by Theorem \ref{thm: n}.
%
\end{proof}

A second consequence of Theorem \ref{thm: n} is the following asymptotic result. This basically says that when $n$ is arbitrarily large, almost all irreducible characters of $\fS_n$ admit constituents of every possible degree on restriction to a Sylow $p$-subgroup. 

\begin{corollary} \label{thm: asintotico}
Let $\Omega_n=\bigcap_k\Omega_n\6k$, where $k$ runs over $[0, \alpha_n]$. Then $$\lim_{n\rightarrow\infty}\frac{|\Omega_n|}{|\mathcal{P}(n)|}=1.$$
\end{corollary}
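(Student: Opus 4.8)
The plan is to show that $\Omega_n$ contains a set of partitions whose relative size inside $\mathcal{P}(n)$ tends to $1$, and the natural candidate is a box $\mathcal{B}_n(n-N)$ for $N=\lfloor n/p^2\rfloor$. Indeed, by Corollary \ref{cor: n}(i) we have $\mathcal{B}_n(n-N)=\Omega_n^{\alpha_n}\subseteq\Omega_n^k$ for every $k\in[0,\alpha_n]$, since $\Omega_n^{\alpha_n}$ is the smallest of all the sets $\Omega_n^k$. Taking the intersection over all admissible $k$ we obtain $\mathcal{B}_n(n-N)\subseteq\Omega_n$, so it suffices to prove that
\[\lim_{n\to\infty}\frac{|\mathcal{B}_n(n-N)|}{|\mathcal{P}(n)|}=1,\qquad N=\Big\lfloor\frac{n}{p^2}\Big\rfloor.\]
Equivalently, writing $c=N=\lfloor n/p^2\rfloor$, we must show that the proportion of partitions $\lambda$ of $n$ with $\lambda_1> n-c$ or $\ell(\lambda)>n-c$ tends to $0$.

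The key step is therefore a purely combinatorial estimate on the number of partitions of $n$ with a very long first row. Since $c=\lfloor n/p^2\rfloor\le n/9$ (as $p\ge3$), the condition $\lambda_1>n-c$ forces $\lambda_1\ge n-c+1$, so removing the first row leaves a partition of $n-\lambda_1\le c-1$ into parts each of size at most $\lambda_1$; hence the number of such $\lambda$ is at most $\sum_{m=0}^{c-1}|\mathcal{P}(m)|\le c\cdot|\mathcal{P}(c)|$. By conjugation the number of $\lambda$ with $\ell(\lambda)>n-c$ is the same, so
\[|\mathcal{P}(n)\smallsetminus\mathcal{B}_n(n-c)|\ \le\ 2\,c\,|\mathcal{P}(c)|.\]
Now I would invoke the Hardy–Ramanujan asymptotic $|\mathcal{P}(m)|\sim \tfrac{1}{4m\sqrt3}\exp(\pi\sqrt{2m/3})$ (or merely the crude bound $|\mathcal{P}(m)|\le \exp(C\sqrt m)$ together with $|\mathcal{P}(n)|\ge \exp(c'\sqrt n)$ for suitable constants). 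Since $c\le n/9$, we have $\sqrt c\le \tfrac13\sqrt n$, so $|\mathcal{P}(c)|\le \exp(\tfrac{C}{3}\sqrt n)$, while $|\mathcal{P}(n)|\ge \exp(c'\sqrt n)$; choosing the comparison constants so that $C/3<c'$ (which holds because one may take $C$ as close to $\pi\sqrt{2/3}$ as desired and $c'$ as close to $\pi\sqrt{2/3}$ from below, and $\tfrac13<1$) gives
\[\frac{2\,c\,|\mathcal{P}(c)|}{|\mathcal{P}(n)|}\ \le\ 2n\exp\!\big((\tfrac{C}{3}-c')\sqrt n\big)\ \xrightarrow[n\to\infty]{}\ 0.\]
Hence $|\mathcal{B}_n(n-c)|/|\mathcal{P}(n)|\to1$, and therefore $|\Omega_n|/|\mathcal{P}(n)|\to1$.

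I expect the main (and essentially only) obstacle to be packaging the estimate cleanly: one must be a little careful that $N=\lfloor n/p^2\rfloor$ really does satisfy $N\le n/p^2\le n/9$ for every odd prime $p$, so that the exponent $\sqrt c/\sqrt n\le 1/3$ is genuinely bounded away from $1$, and one should state precisely which form of the partition asymptotics is being used (the elementary upper bound $|\mathcal{P}(m)|\le \exp(\pi\sqrt{2m/3})$ and the lower bound $|\mathcal{P}(n)|\ge \exp((\pi-\varepsilon)\sqrt{2n/3})$ for large $n$ both suffice and avoid needing the full Hardy–Ramanujan formula). No deep input beyond Corollary \ref{cor: n}(i) and classical partition counting is required; the content of Theorem \ref{thm: n} has already been distilled into the inclusion $\mathcal{B}_n(n-N)\subseteq\Omega_n$, and the rest is elementary asymptotic analysis.
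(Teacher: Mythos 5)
Your proposal is correct, and its first half is exactly the paper's reduction: both arguments rest on the containment $\mathcal{B}_n(n-N)\subseteq\Omega_n$ with $N=\lfloor n/p^2\rfloor$ (in fact, by Theorem \ref{thm: n} and the nesting of the sets $\Omega_n^k$, one has the equality $\Omega_n=\Omega_n^{\alpha_n}=\mathcal{B}_n(n-N)$), so that everything comes down to showing that almost all partitions of $n$ fit inside the box $\mathcal{B}_n(n-N)$. Where you diverge is in how this last combinatorial fact is established. The paper cites the Erd\H{o}s--Lehner theorem, which gives the much stronger statement that for almost all $\lambda\in\mathcal{P}(n)$ both $\lambda_1$ and $l(\lambda)$ are of order $\sqrt{n}\log n$, far below $n-N\geq n/2$. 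You instead give a self-contained counting argument: a partition with $\lambda_1>n-c$ is determined by the partition of $n-\lambda_1\leq c-1$ left after deleting its first row, so the bad set has size at most $2c\,|\mathcal{P}(c)|$, and since $c\leq n/9$ the exponent $\pi\sqrt{2c/3}$ in the Hardy--Ramanujan bound is at most one third of the corresponding exponent for $|\mathcal{P}(n)|$, forcing the ratio to vanish. Your route trades the citation of a distributional theorem for an elementary injection plus standard upper and lower bounds on $p(n)$; it is slightly longer but arguably more transparent, and you are right to flag that the crude lower bound $p(n)\geq 2^{\lfloor\sqrt{n}\rfloor}$ would \emph{not} suffice here (its exponent constant $\log 2$ is smaller than $\tfrac{1}{3}\pi\sqrt{2/3}$), so the near-optimal exponential lower bound you invoke is genuinely needed. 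Both proofs are sound.
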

\begin{proof}
A result of Erd\H{o}s and Lehner \cite[(1.4)]{EL} guarantees that given $f(n)$ a function that diverges as $n$ tends to infinity, then for all but $o(|\mathcal{P}(n)|)$ partitions $\lambda$ of $n$, the quantities $\lambda_1$ and $l(\lambda)$ lie between $\sqrt{n}\cdot( \tfrac{\log n}{d} \pm f(n) )$ where $d$ is a constant. 
By Theorem \ref{thm: n}, we observe that $\Omega_n=\Omega_n\6{\alpha_n}=\mathcal{B}(n-N)$, where $N=\lfloor \frac{n}{p\62} \rfloor$. Since $n-N\geq n/2$, the statement follows.
\end{proof}

\begin{example}\label{ex: final}
	Let $p=3$ and $n=3^3+3^2+3$. Following Notation \ref{notation_n}, we have $n_1=3,\ n_2=2$ and $n_3=1$. Hence $\mathcal{R}=\Set{(1,1),(1,2),(1,3),(2,1)}$, since $[1,3^{n_3 -2}]=\emptyset$. Observe that $|\mathcal{R}|=4=\lfloor \frac{n}{3^2} \rfloor$.
Using Definition \ref{def: Ax}, we can see that $\ell(3,1)=2,\ \ell(3,2)=\ell(3,3)=1$ and $\ell(2,1)=1$.
Hence, the definition of the total order $\triangleright$ on $\mathcal{R}$ implies that $(1,1)\triangleright (1,2)\triangleright (1,3) \triangleright (2,1)$. Thus $\ell(1)=2,\ \ell(2)=\ell(3)=\ell(4)=1$ and 
	\[A_1=\Set{1,2},\ A_2=\Set{3},\ A_3=\Set{4},\ A_4=\Set{5}.\]
	Notice that $\Set{A_1, \dots , A_4}$ is a partition of $[1,\alpha_n]=[1,5]$, as required. 
Moreover by Theorem \ref{thm: n} we have 
	$ \Omega_n^1=\Omega_n^2=\mathcal{B}_n(n-1),\ \Omega_n^3=\mathcal{B}_n(n-2),\ \Omega_n^4=\mathcal{B}_n(n-3),\ \Omega_n^5=\mathcal{B}_n(n-4) $.

Using the notation of Theorem \ref{thm: boxes arb n}, the above computation gives that $T_n^2=n-1$. Following the proof of Theorem \ref{thm: boxes arb n}, we can compute $T_n^2$ in a different way. We have 
	\[\mathcal{J}(2)=\Set{(j_1,j_2,j_3)\in \mathcal{C}(2) | j_1\in [0,4],\ j_2\in[0,1],\ j_3\in \Set{0}}=\Set{(2,0,0),(1,1,0)}.\]
	Hence $M=\max \Set{t_3^2+t_2^0+t_1^0, t_3^1+t_2^1+t_1^0}=\max \Set{n-1, n-2} =n-1$. Thus $T_n^2=M=n-1$, as expected.
Notice that $n_3=1$ does not contribute at all to the computations. In fact in $\mathcal{R}$ there are no elements of the form $(3,y)$, $y\in \mathbb{N}$. Furthermore, by looking at the third column of Table \ref{table: final}, we can see that $T_n^k=T_{n-3}^k -3$ for every $k\in \mathbb{N}$.
A second example of this fact can be found by observing that the first two columns of Table \ref{table: final} are equal. 
\end{example}

\begin{table}[ht]
	\caption{Let $p=3$. According to Theorem \ref{thm: n}, the structure of $\Omega_n^k$ is recorded in the entry corresponding to row $k$ and column $n$.} \label{table: final}
	\centering
	\begin{tabular}{ c || c | c | c | c | c } 
		$\Omega_{n}^k$ &$n=3+3^3$ &$n=2\cdot 3 +3^3$ &$n=3^2+3^3$ &$n=3^3+3^3$ &$n=3^3+3^4$  \\[0.1ex]
		\hline 
		\ & \ & \ & \ & \ & \   \\
		$k=0$ & $\mathcal{B}_n(n)$ & $\mathcal{B}_n(n)$ & $\mathcal{B}_n(n)$ & $\mathcal{B}_n(n)$ & $\mathcal{B}_n(n)$ \\ 
		$k=1$ & $\mathcal{B}_n(n -1)$ & $\mathcal{B}_n(n -1)$ & $\mathcal{B}_n(n -1)$ & $\mathcal{B}_n(n -1)$ & $\mathcal{B}_n(n -1)$  \\ 
		$k=2$ & $\mathcal{B}_n(n -1)$ & $\mathcal{B}_n(n -1)$ & $\mathcal{B}_n(n -1)$ & $\mathcal{B}_n(n -1)$ & $\mathcal{B}_n(n -1)$ \\
		$k=3$ & $\mathcal{B}_n(n -2)$ & $\mathcal{B}_n(n -2)$ & $\mathcal{B}_n(n -2)$ & $\mathcal{B}_n(n -2)$ & $\mathcal{B}_n(n -1)$  \\
		$k=4$ & $\mathcal{B}_n(n -3)$ & $\mathcal{B}_n(n -3)$ &$\mathcal{B}_n(n -3)$ & $\mathcal{B}_n(n -2)$ & $\mathcal{B}_n(n -2)$  \\
		$k=5$ & $\emptyset$ & $\emptyset$ & $\mathcal{B}_n(n -4)$ & $\mathcal{B}_n(n -3)$ & $\mathcal{B}_n(n -2)$  \\
		$k=6$ & $\emptyset$ & $\emptyset$ & $\emptyset$ & $\mathcal{B}_n(n -4)$ & $\mathcal{B}_n(n -3)$ \\
		$k=7$ & $\emptyset$ & $\emptyset$ & $\emptyset$ & $\mathcal{B}_n(n -5)$ & $\mathcal{B}_n(n -3)$  \\
		$k=8$ & $\emptyset$ & $\emptyset$ & $\emptyset$ & $\mathcal{B}_n(n -6)$ & $\mathcal{B}_n(n -4)$  \\
		$k=9$ & $\emptyset$ & $\emptyset$ & $\emptyset$ & $\emptyset$ & $\mathcal{B}_n(n -4)$  \\
		$k=10$ & $\emptyset$ & $\emptyset$ & $\emptyset$ & $\emptyset$ & $\mathcal{B}_n(n -5)$ \\
		$k=11$ & $\emptyset$ & $\emptyset$ & $\emptyset$ & $\emptyset$ & $\mathcal{B}_n(n -6)$ \\
		$k=12$ & $\emptyset$ & $\emptyset$ & $\emptyset$ & $\emptyset$ & $\mathcal{B}_n(n -7)$ \\
		$k=13$ & $\emptyset$ & $\emptyset$ & $\emptyset$  & $\emptyset$ & $\mathcal{B}_n(n -8)$ \\
		$k=14$ & $\emptyset$ & $\emptyset$ & $\emptyset$ & $\emptyset$ & $\mathcal{B}_n(n -9)$ \\
		$k=15$ & $\emptyset$ & $\emptyset$ & $\emptyset$ & $\emptyset$ & $\mathcal{B}_n(n -10)$ \\
		$k=16$ & $\emptyset$ & $\emptyset$ & $\emptyset$ & $\emptyset$ & $\mathcal{B}_n(n -11)$ \\
		$k=17$ & $\emptyset$ & $\emptyset$ & $\emptyset$ & $\emptyset$ & $\mathcal{B}_n(n -12)$ \\
		$k=18$ & $\emptyset$ & $\emptyset$ & $\emptyset$ & $\emptyset$ & $\emptyset$  \\[0.1ex] 
	\end{tabular}
\end{table}

\end{document}